\newtheorem{lemma}{Lemma}
\newtheorem*{corollary}{Corollary}
\newtheorem{proposition}{Proposition}
\newtheorem*{mytheorem1}{\bf Theorem 1}
\newtheorem*{mytheorem1'}{\bf Theorem 1$^\prime$}
\newtheorem*{mytheorem2}{\bf Theorem 2}
\newtheorem*{mytheorem2'}{\bf Theorem 2$^\prime$}
\newtheorem*{mytheorem3}{\bf Theorem 3}
\newtheorem*{mytheorem3'}{\bf Theorem 3$^\prime$}
\newtheorem*{mytheorem3''}{\bf Theorem 3$^{\prime\prime}$}
\newtheorem*{mytheorem4}{\bf Theorem 4}
\newtheorem*{mytheorem4'}{\bf Theorem 4$^\prime$}
\newtheorem*{mytheorem5}{\bf Theorem 5}
\newtheorem*{mytheorem6}{\bf Theorem 6}
\newtheorem*{mytheorem7}{\bf Theorem 7}
\newtheorem*{mytheorem8}{\bf Theorem 8}
\newtheorem*{mytheorem9}{\bf Theorem 9}
\newtheorem*{mytheorem10}{\bf Theorem 10}
\newtheorem*{mytheorem11}{\bf Theorem 11}
\newtheorem*{mytheorem12}{\bf Theorem 12}
\newtheorem*{mytheorem13}{\bf Theorem 13}
\newcommand{\ilim}{\mathop{\varprojlim}\limits}
\def\GL{\operatorname{\mathbf{GL}}}
\def\O{\operatorname{\mathbf{O}}}
\def\U{\operatorname{\mathbf{U}}}
\def\SO{\operatorname{\mathbf{SO}}}
\def\SL{\operatorname{\mathbf{SL}}}
\def\Sp{\operatorname{\mathbf{Sp}}}
\def\Aut{\operatorname{Aut}}
\def\Tr{\operatorname{Tr}}
\def\mod{\operatorname{mod}}
\def\ad{\operatorname{ad}}
\def\Gal{\operatorname{Gal}}
\def\Hom{\operatorname{Hom}}
\def\Im{\operatorname{Im}}
\def\car{\operatorname{char}}
\def\den{\operatorname{den}}
\def\disc{\operatorname{disc}}
\def\End{\operatorname{End}}
\def\Q{\mathbf Q}
\def\Z{\mathbf Z}
\def\H{\mathbf H}
\def\C{\mathbf C}
\def\R{\mathbf R}
\def\F{\mathbf F}
\def\P{\mathbf P}
\def\G{\mathbf G}
\def\L{\mathbf L}
\def\M{\mathbf M}
\def\<{\langle}
\def\>{\rangle}
\title{Bounds for the orders of the finite subgroups of $G(k)$}
\author{Jean-Pierre Serre}
\begin{document}

\markright{INTRODUCTION}
%\markleft{J.-P. SERRE, ORDERS OF FINITE SUBGROUPS}
\begin{center}
{\small Group Representation Theory, eds. M. Geck, D. Testerman, J. Th\'evenaz,}

{\small EPFL Press, Lausanne 2007, pp. 405-450 }\vskip1cm
{\Large Bounds for the orders of the finite subgroups of $G(k)$}

\vskip 0.7cm
Jean-Pierre SERRE
\end{center} 
\vskip1cm
\specialsection*{\bf Introduction}

The present text reproduces - with a number of additions - a series of three two-hour lectures given at the Ecole Polytechnique F\'ed\'erale de Lausanne (E.P.F.L.) on May 25-26-27, 2005.

The starting point is a classical result of Minkowski, dating from 1887, which gives a multiplicative upper bound for the orders of the finite subgroups of $\GL_n(\Q)$. The method can easily be extended to other algebraic groups than $\GL_n$, and the field $\Q$ can be replaced by any number field. What is less obvious is that:

a) one can work over an arbitrary ground field;

b) in most cases one may construct examples showing that the bound thus obtained is optimal.

This is what I explain in the lectures.
\vskip 0.3cm

Lecture I is historical: Minkowski (\S 1), Schur (\S 2), Blichfeldt and others (\S 3). The results it describes are mostly well-known, so that I did not feel compelled to give complete proofs. 

Lecture II gives upper bounds for the order of a finite $\ell$-subgroup of $G(k)$, where $G$ is a reductive group over a field $k$, and $\ell$ is a prime number. These bounds depend on $G$ via its root system, and on $k$ via the size of the Galois group of its $\ell$-cyclotomic tower (\S 4). One of these bounds (called here the S-bound, cf. \S 5) is a bit crude but is easy to prove and to apply. The second one (called the M-bound) is the most interesting one (\S 6). Its proof follows Minkowski's method, combined with Chebotarev's density theorem (for schemes of any dimension, not merely dimension 1); it has a curious cohomological generalization cf. \S 6.8. The last subsection (\S 6.9) mentions some related problems, not on semisimple groups, but on Cremona groups; for instance: does the field $\Q(X,Y,Z)$ have an automorphism of order 11 ? 

Lecture III gives the construction of ``optimal" large subgroups. The case of the classical groups (\S 9) is not difficult. Exceptional groups such as $E_8$ are a different matter; to handle them, we shall use Galois twists, braid groups and Tits groups, cf. \S\S 10-12.
\vfill\eject
{\sl Acknowledgements.}  A first draft of these notes, made by D. Testerman and R. Corran, has been very useful; and so has been the generous help of D. Testerman with the successive versions of the text. My thanks go to both of them, and to the E.P.F.L. staff for its hospitality. I also thank M. Brou\'e and J. Michel for several discussions on braid groups.

\vskip 0.5cm
\noindent J-P. Serre\hskip 2cm April 2006
\vskip2cm
\begin{center}
{\bf Table of Contents}
\end{center}
\vskip 1cm
${}$\hskip2cm Lecture I. History: Minkowski, Schur, ...
\begin{enumerate}
\item[1.] Minkowski
\item[2.] Schur
\item[3.] Blichfeldt and others
\end{enumerate}
${}$\hskip2cm Lecture II. Upper bounds
\begin{enumerate}
\item[4.] The invariants $t$ and $m$
\item[5.] The S-bound
\item[6.] The M-bound
\end{enumerate}
${}$\hskip2cm Lecture III. Construction of large subgroups
\begin{enumerate}
\item[7.] Statements
\item[8.] Arithmetic methods $(k = \Q)$
\item[9.] Proof of theorem 9 for classical groups
\item[10.] Galois twists
\item[11.] A general construction
\item[12.] Proof of theorem 9 for exceptional groups
\item[13.] Proof of theorems 10 and 11
\item[14.] The case $m = \infty$
\end{enumerate}
${}$\hskip2cm References

%%%%%%%%%%%%%%%%%%%%%%%%%%%%%%%%%%%%%%%%%%%%%%%%%%%%%%%%%%%%%%%%%%%%%%%
%%%%%%%%%%%%%%%%%%%%%%%%%%%%%%%%%%%%%%%%%%%%%%%%%%%%%%%%%%%%%%%%%%%%%%

%\include{serredonna_1}
%\markboth{J.-P. SERRE, ORDERS OF FINITE SUBGROUPS}{bidon}

\markright{LECTURE I:  HISTORY: MINKOWSKI, SCHUR, ...}

%\markleft{J.-P. SERRE, ORDERS OF FINITE SUBGROUPS}
\specialsection*{\bf I.  History: Minkowski, Schur, ...}

\vskip 0.5cm
\begin{center}
{\bf {\S 1. Minkowski}}
\end{center}
\vskip 0.3cm

Reference: [Mi 87].

\setcounter{section}{1}
\subsection{Statements} We shall use the following notation:

$\ell$ is a fixed prime number; when we need other primes we usually denote
 them by $p$; 

the $\ell$-adic valuation of a rational number $x$ is denoted by $v_\ell (x)$; one has $v_{\ell}(\ell ) = 1$, and $v_\ell (x) = 0$ if $x$ is an integer with $(x, \ell ) = 1$; 

the number of elements of a finite set $A$  is denoted by $|A|$; we write $v_\ell (A)$ instead of $v_\ell (|A|)$; if $A$ is a group, $\ell^{^{v_\ell (A)}}$ is the order of an $\ell$-Sylow of $A$; 

if $x$ is a real number, its integral part (``floor") is denoted by $[x]$. 

\medskip

\noindent We may now state Minkowski's theorem ([Mi 87]):

\begin{mytheorem1}
\label{I.1}
Let $n$ be an integer $\ge 1$, and let $\ell$ be a prime number. Define{\rm :}
$$
M(n,\ell ) = \left[ \frac{n}{\ell-1} \right] + \left[ \frac{n}{\ell(\ell-1)} \right] + \left[ \frac{n}{\ell^2(\ell-1)} \right] + \cdots
$$
Then{\rm :}

\noindent {\rm (i)} If $A$ is a finite subgroup of $\GL_n (\Q )$, we have $v_\ell (A) \le M(n,\ell ).$ 

\noindent {\rm (ii)} There exists a finite $\ell$-subgroup $A$ of  $\GL_n(\Q )$ with $v_\ell (A) = M(n,\ell )$.
\end{mytheorem1}

The proof will be given in \S 1.3 and \S 1.4.
\vskip 0.2cm
\noindent {\sl Remarks.}

1) Let us define an integer $M(n)$ by:
$$
M(n) = \prod_\ell\, \ell^{M(n,\ell)} .
$$
Part (i) of th.1 says that the order of any finite subgroup of $\GL_n (\Q)$ {\sl divides} $M(n)$, and part (ii) says that $M(n)$ is the smallest integer having  this property. Hence $M(n)$ is a sharp multiplicative bound for $|A|$. 

Here are the values of $M(n)$ for $n \le 8$:
\vskip 0.1cm
\noindent $M(1) = 2$

\noindent $M(2) = 2^3\!\cdot3=24$

\noindent $M(3) = 2^4\!\cdot3 = 48$

\noindent $M(4) = 2^7\!\cdot3^2\!\cdot5 = 5760$

\noindent $M(5) = 2^8\!\cdot3^2\!\cdot 5 = 11520$

\noindent $M(6) = 2^{10}\!\cdot3^4\!\cdot5\cdot 7 = 2903040 $

\noindent $M(7) = 2^{11}\!\cdot3^4\!\cdot5\cdot7 = 5806080$

\noindent $M(8) = 2^{15}\!\cdot3^5\!\cdot5^2\!\cdot 7 = 1393459200.$
\vskip 0.1cm
Note that 
$$M(n)/M(n-1) =\left\{ \begin{array}{ll}
2 &{\hbox{\sl if }} n {\hbox{\sl { is odd }}}\\
&\\
{\hbox{\rm {denominator of }}} b_n/n& {\hbox{\sl { if }}} n {\hbox{\sl { is even}}},\end{array}\right.$$

\noindent where $b_n$ is the $n$-th Bernoulli number. (The occurence of the Bernoulli numbers is natural in view of the mass formulae which Minkowski had proved a few years before.)

2) One may ask whether there is a finite subgroup $A$ of $\GL_n(\Q)$ of order $M(n)$. It is so for $n = 1$ and $n = 3$ and probably for no other value of $n$ (as Burnside already remarked on p.484 of [Bu 11]). Indeed, some incomplete arguments of Weisfeiler and Feit would imply that the upper bound of $|A|$ is $2^n\cdot n$! if $n > 10$, which is much smaller than $M(n)$. See the comments of Guralnick-Lorenz in [GL 06], \S 6.1.

\vskip 0.2cm
\noindent{\small{\sl Exercise.} Let $\left[\frac{n}{\ell-1}\right] = \sum a_i\ell^{i}, 0 \le a_i \le \ell -1$, be the $\ell$-adic expansion of $\left[\frac{n}{\ell-1}\right].$

\noindent Show that
$
M(n,\ell ) = \sum a_i \frac{\ell^{i+1}-1}{\ell -1} = \sum M(a_i\ell^{i} (\ell-1),\ell).
$}

\subsection{Minkowski's lemma.} Minkowski's paper starts with the following often quoted lemma:

\begin{lemma}
\label{lem1} If $m \ge 3$, the kernel of $\GL_n(\Z ) \rightarrow \GL _n (\Z / m\Z)$ 
is torsion free.
\end{lemma}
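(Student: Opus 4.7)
The plan is to argue by contradiction: suppose $A \in \GL_n(\Z)$ lies in the kernel of reduction mod $m$, has finite order, and $A \ne I$. Passing to a suitable power reduces at once to the case where $A$ has prime order $p$. I would write $A = I + N$ with $N \equiv 0 \pmod m$, then choose a prime $\ell$ dividing $m$ (to be specified below), let $s$ be the minimum $\ell$-adic valuation of the entries of $N$, and write $N = \ell^s M$ where $M$ is an integer matrix at least one of whose entries is coprime to $\ell$. Note $s \ge v_\ell(m) \ge 1$.

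The main computation is to expand $(I + \ell^s M)^p = I$ by the binomial theorem and divide by $\ell^s$, yielding
\[
pM + \binom{p}{2}\ell^s M^2 + \binom{p}{3}\ell^{2s} M^3 + \cdots + \ell^{(p-1)s} M^p \;=\; 0.
\]
If $\ell \ne p$, reducing mod $\ell$ kills every term for $k \ge 2$ (since $s \ge 1$) and leaves $pM \equiv 0 \pmod \ell$; because $p$ is invertible mod $\ell$, this forces $M \equiv 0 \pmod \ell$, contradicting the choice of $M$. So whenever $m$ admits a prime divisor other than $p$, I pick that as $\ell$ and finish here.

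The delicate case is when $m$ is a power of $p$, forcing $\ell = p$. Reducing mod $\ell$ now gives only $0 \equiv 0$, so I go one step further and reduce mod $\ell^2$. Each middle coefficient $\binom{p}{k}\ell^{(k-1)s}$ with $2 \le k \le p-1$ has $\ell$-valuation $\ge 1 + s \ge 2$; the leading term contributes $\ell M$; and the tail term $\ell^{(p-1)s} M^p$ has $\ell$-valuation $(p-1)s$. Provided $(p-1)s \ge 2$, the identity collapses to $\ell M \equiv 0 \pmod{\ell^2}$, i.e.\ $M \equiv 0 \pmod \ell$, again a contradiction. This holds if either $p$ is odd (so $p-1 \ge 2$) or $p = 2$ with $s \ge 2$; in the latter subcase $m$ is a power of $2$ with $m \ge 3$, hence $m \ge 4$ and $s \ge v_2(m) \ge 2$, as required. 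The main obstacle is precisely this subcase $p = \ell = 2$: it is where the hypothesis $m \ge 3$ is used in an essential way (indeed $-I$ shows the lemma fails for $m = 2$), and where the easy mod-$\ell$ argument must be upgraded to a mod-$\ell^2$ argument through careful tracking of $\ell$-adic valuations in the binomial expansion.
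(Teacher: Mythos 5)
Your proof is correct, and it is more complete than what the paper actually records. Serre dismisses Lemma~1 with ``Easy exercise!'' and instead proves the variant Lemma~$1'$, whose hypothesis $\ell \ne \operatorname{char}(k)$ means it only covers your case $\ell \ne p$: there the binomial identity $\sum_{k\ge 1}\binom{p}{k}N^k=0$ is packaged as $N\cdot u=0$ with $u$ a unit (its residue is $p$, invertible mod $\ell$), giving $N=0$ at once. That unit-factor argument is the cleaner way to say what your reduction mod $\ell$ says, but it genuinely breaks down when $m$ is a power of $p$, i.e.\ when the torsion order and the residue characteristic coincide — and that case is needed for Lemma~1 as stated (e.g.\ $m=p$ odd, or $m=4$). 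Your contribution is exactly the missing half: after dividing by $\ell^s$ you push the valuation count to mod $\ell^2$, noting that $\binom{p}{k}\ell^{(k-1)s}$ has $\ell$-valuation $\ge 2$ for $2\le k\le p-1$ and that the tail $\ell^{(p-1)s}M^p$ does too as soon as $(p-1)s\ge 2$, which is where $m\ge 3$ enters (forcing $s\ge 2$ when $p=\ell=2$). So the two arguments are the same in spirit (binomial expansion of $(1+N)^p$ plus $\ell$-adic bookkeeping), but yours actually proves the stated lemma, whereas the paper's written proof covers only the ``away from $p$'' case that is all that is used later in \S1.3; the reader was presumably expected to supply exactly the mod-$\ell^2$ refinement you gave, or to invoke the formal-group result from Bourbaki that Serre cites.
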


\begin{proof} 
Easy exercise ! One may deduce it from general results on formal groups over local rings, cf. Bourbaki [LIE III], \S7. Many variants exist. For instance:
\vskip 0.2cm
\noindent{\bf Lemma 1$^\prime$.} {\em Let $R$ be a local ring with maximal ideal ${\mathfrak m}$ and residue field $k = R/\mathfrak m.$ If $\ell$ is a prime number distinct from {\rm char}$(k)$, the kernel of the map $\GL_n(R) \rightarrow \GL_n(k)$ does not contain any element of order $\ell$. }
\vskip 0.2cm
\noindent{\sl Proof.} Suppose $x \in \GL_n(R)$ has order $\ell$ and gives 1 in  $\GL_n(k)$. Write $x  = 1 + y$; all the coefficients of the matrix $y$ belong to $\mathfrak m$. Since $x^\ell = 1$, we have 
$$
\ell \cdot y + {\ell\choose2}\cdot y^2 + \dots + \ell\cdot y^{\ell -1} + y^\ell = 0,
$$
which we may write as $y\cdot u = 0$, with $u = \ell + {\ell\choose2} y + \dots + y^{\ell -1}$. The image of $u$ in $\GL_n(k)$ is $\ell$, which is invertible. Hence $u$ is invertible, and since $y\cdot u$ is 0, this shows that $y = 0$.
\end{proof}

Several other variants can be found in [SZ 96].

\vskip 0.3cm
\noindent {\sl Remark.}
A nice consequence of lemma $1^\prime$ is the following result of Malcev and Selberg ([Bo 69], \S 17):\smallskip

$(^*)$ {\sl Let} $\Gamma$ {\sl be a finitely generated subgroup of} $\GL_n(K)$, {\sl where} $K$ {\sl is a field of characteristic} $0$. {\sl Then} $\Gamma$ 
{\sl has a torsion free subgroup of finite index}.\smallskip

\noindent {\sl Sketch of proof} (for more details, see Borel, {\sl loc.cit.}). Let $S$ be a finite generating subset of $\Gamma$, and let $L$ be the ring generated by the coefficients of the elements of $S \cup S^{-1}$. We have $\Gamma \subset \GL_n (L)$. Let $\mathfrak m$ be a maximal ideal of $L$; the residue field $k = A/\mathfrak m$ is finite ([AC V], p.68, cor.1 to th.3); let $p$ be its characteristic. The kernel $\Gamma_1$ of $\Gamma \rightarrow {\bf GL}_n(k)$ has finite index in $\Gamma$; by lemma $1^\prime$ (applied to the local ring $R = L_{\mathfrak m}$), $\Gamma_1$ does not have any torsion except possibly $p$-torsion. By choosing another maximal ideal of $L$, with a different residue characteristic, one gets a torsion free subgroup of finite index of $\Gamma_1$, and hence of $\Gamma$.\hfill$\Box$

\vskip 0.3cm
\noindent{\sl Remark.} When $K$ has characteristic $p > 0$ the same proof shows that $\Gamma$ has a subgroup of finite index which is ``$p^\prime$-torsion free", i.e. such that its elements of finite order have order a power of $p$.

\subsection{Proof of theorem 1 (i).} Let $A$ be a finite subgroup of $\GL_n(\Q)$; we have to show that $v_\ell(A) \leq M(n,\ell )$. Note first:

\subsubsection{The group $A$ is conjugate to a subgroup of $\GL_n(\Z)$.}${}$

This amounts to saying that there exists an $A$-stable lattice in $\Q^n$, 
which is clear: just take the lattice generated by the $A$-transforms of the 
standard lattice $\Z^n$.

\subsubsection{There is a positive definite quadratic form on $\Q^n$, with 
integral coefficients, which is invariant by $A$}  ${}$

Same argument: take the sum of the $A$-transforms of $x_1^2 + \dots + x_n^2$,
 and multiply it by a suitable non-zero integer, in order to cancel any denominator. \smallskip

\noindent Let us now proceed with the proof of $v_\ell (A) \leq M(n,\ell )$. We do it in two steps:

\subsubsection{The case $\ell > 2$} ${}$

By 1.3.1, we may assume that $A$ is contained in $\GL_n(\Z)$. Let $p$ be a prime number $\not= 2$. By lemma 1, the map $A \rightarrow \GL_n(\Z/p\Z)$ is injective. Hence
$$
v_\ell (A) \le a(p) = v_\ell \big(\GL_n(\Z/p\Z)\big).
$$
The order of $\GL_n(\Z/p\Z)$ is $p^{n(n-1)/2}(p-1)(p^2-1)\dots (p^n-1)$. Let us assume that $p \not= \ell$. Then we have
$$
a(p) = \sum^n_{i=1} v_\ell (p^{i}-1).
$$
We now choose $p$ in such a way that $a(p)$ is as small as possible. More precisely, we choose $p$ such that:

$(^*)$ {\emph{The image of  $p$ in $(\Z/\ell^2\Z)^*$ is a generator of that group.}}

This is possible by Dirichlet's theorem on the existence of primes in arithmetic progressions (of course, one should also observe that $(\Z/\ell^2\Z)^*$ is cyclic.)

Once $p$ is chosen in that way, then $p^{i}-1$ is divisible by $\ell$ only if $i$ is divisible by $\ell-1$; moreover, one has $v_\ell (p^{\ell-1}-1) = 1$ because of $(^*)$, and this implies that $v_\ell(p^{i}-1) = 1 + v_\ell (i)$ if $i$ is divisible by $\ell-1$. (This is where the hypothesis $\ell > 2$ is used.) One can then compute $a(p)$ by the formula above. The number of indices $i \le n$ which are divisible by $\ell-1$ is $\left[\frac{n}{\ell-1}\right]$. We thus get:
\begin{eqnarray*}
a(p) & = & \left[\frac{n}{\ell-1}\right] + \sum_{1 \le j \le \left[\frac{n}{\ell -1}\right]} v_\ell (j) = \left[\frac{n}{\ell-1}\right] + v_\ell \big(\left[\frac{n}{\ell-1}\right]!\big)\\
& = & \left[\frac{n}{\ell-1}\right] + \left[\frac{n}{\ell (\ell -1)}\right] + \dots = M(n,\ell ).
\end{eqnarray*}
\noindent This proves th.1 (i) in the case $\ell \not= 2$. 

\subsubsection{The case $\ell = 2$.}${}$

When $\ell = 2$, the method above does not give the right bound as soon as $n > 1$. One needs to replace $\GL_n$ by an orthogonal group. Indeed, by 1.3.1 and 1.3.2,  we may assume, not only that $A$ is contained in $\GL_n(\Z)$, but also that it is contained in the orthogonal group $\O_n(q)$, where $q$ is a non-degenerate quadratic form with integral coefficients. Let $D$ be the discriminant of $q$, and let us choose a prime number $p > 2$ which does not divide $D$. The image of $A$ in $\GL_n(\Z/p\Z)$ is contained in the orthogonal group $\O_n (\Z/p\Z)$ relative to the reduction of $q$ mod $p$. If we put $r = [n/2]$, the order of $\O_n(\Z/p\Z)$ is known to be:
$$2\cdot p^{r^2} (p^2-1) (p^4-1) \dots (p^{2r}-1)\quad {\mbox{ if $n$ is odd.}}
$$

\noindent and
$$2\cdot p^{r(r-1)}(p^2-1)(p^4-1)\dots (p^{2r}-1)/(p^r+\varepsilon)\quad {\mbox{if $n$ is even,}}
$$
with $\varepsilon = \pm 1$ equal to the Legendre symbol at $p$ of $(-1)^rD.$

If we choose $p \equiv \pm 3$ (mod 8), we have $v_2(p^{2i}-1) = 3 + v_2(i)$, and 
 $v_2(p^r+\varepsilon ) \ge 1$. If $n$ is odd, this gives 
 $$v_2(\O_n(\Z/p\Z)) = 1 + 3r + v_2(r!) = n + r + \left[\frac{r}{2}\right] + \left[\frac{r}{4}\right] + \dots = M(n,2),$$ and, if $n$ is even:
$$v_2(\O_n(\Z/p\Z)) \le 3r + v_2(r!) = M(n,2).$$
Hence $v_2(A)$ is at most equal to $M(n,2)$.\hfill $\Box$
\vskip 0.3cm
\noindent {\sl Remark.} There are several ways of writing down this proof. For instance:

- There is no need to embed $A$ in $\GL_n(\Z)$. It sits in $\GL_n(\Z[1/N])$ for a suitable $N \ge 1$, and this allows us to reduce mod $p$ for all $p$'s not dividing $N$. 

- Minkowski's lemma is not needed either: we could replace it by the trivial fact that a matrix which is different from 1 is not congruent to 1 \mbox{$\mod p$} for all large enough $p$'s.

- Even when $\ell > 2$, we could have worked in $\O_n$ instead of $\GL_n$; that is what Minkowski does. 

- When $\ell = 2$ the case $n$ even can be reduced to the case $n$ odd by observing that, if $A \subset \GL_n (\Q)$, then $A\times \{\pm 1\}$ embeds into $\GL_{n+1}(\Q)$, and $M(n+1,2)$ is equal  to $1 + M(n,2)$.

\subsection{Proof of theorem 1 (ii).} The symmetric group $S_\ell$ has a faithful representation $S_\ell \rightarrow \GL(V_1)$ where $V_1$ is a $\Q$-vector space of dimension $\ell-1$. Put $r = \left[\frac{n}{\ell-1}\right]$, and let $V = V_1 \oplus \dots \oplus V_r$ be the direct sum of $r$ copies of $V_1$. Let $S$ be the semi-direct product of $S_r$ with the product $(S_\ell )^r$ of $r$ copies of $S_\ell$ (``wreath product"). The group $S$ has a natural, and faithful, action on $V$. We may thus view $S$ as a subgroup of $\GL_{r(\ell -1)}(\Q)$, hence also of $\GL_n(\Q)$, since $n \ge r(\ell-1)$. We have 
$$ v_\ell (S) = r + v_\ell (r!) = \left[\frac{n}{\ell-1}\right] + \left[\frac{n}{\ell(\ell-1)}\right] + \dots = M(n,\ell).$$
An $\ell$-Sylow $A$ of $S$ satisfies the conditions of th.1 (ii).\hfill $\Box$

\vskip 0.2cm
\noindent {\sl Example.}
When $\ell = 2$ the group $S$ defined above is the ``hyper-octahedral group", i.e. the group of automorphisms of an $n$-cube (= the Weyl group of a root system
of type $B_n$); in ATLAS notation, it may be written as $2^n\cdot S_n$. 

%1.5 A conjugacy theorem.
\subsection{A conjugacy theorem.} The finite $\ell$-subgroups of $\GL_n(\Q)$ have the following Sylow-like property:

\begin{mytheorem1'} Let $A$ and $A^\prime$ be two finite $\ell$-subgroups of $\GL_n(\Q)$. Assume that $A$ has the maximal order allowed by th.$1$. Then $A^\prime$ is conjugate to a subgroup of $A$.
\end{mytheorem1'}

\begin{corollary}
%\label{cor1}
If $|A| = |A^\prime| = \ell^{M(n,\ell)}$, then $A$ and $A^\prime$ are conjugate in $\GL_n(\Q)$.
\end{corollary}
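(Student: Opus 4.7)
The goal is to embed $A'$ into a conjugate of the maximal group $A$ of \S 1.4, which is (an $\ell$-Sylow of) the wreath product $S_\ell \wr S_r = S_\ell^r \rtimes S_r$ acting on $V_1^{\oplus r} \oplus \Q^{n - r(\ell-1)}$, where $V_1$ is the standard $(\ell-1)$-dimensional representation of $S_\ell$ and $r = [n/(\ell-1)]$. The Corollary follows at once: when $|A'| = |A|$, the embedding $A' \hookrightarrow A$ forces equality.

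My plan is to prove, by induction on $n$, that after a change of basis in $\GL_n(\Q)$ the space $\Q^n$ admits an $A'$-stable direct sum decomposition $\Q^n = L_0 \oplus L_1 \oplus \cdots \oplus L_s$, where $L_0$ is the $A'$-fixed subspace, each $L_i$ ($i \ge 1$) is $(\ell-1)$-dimensional, the group $A'$ permutes the family $\{L_i\}_{i\ge 1}$, and the stabilizer of each $L_i$ in $A'$ acts on $L_i$ through the standard representation of $C_\ell$ (the maximal $\ell$-subgroup of $\GL_{\ell-1}(\Q)$, since $M(\ell-1,\ell)=1$). Granted such a decomposition, $A'$ lies inside $\GL(L_0) \times (\GL_{\ell-1}(\Q) \wr S_s)$, and its image in the wreath factor is an $\ell$-subgroup; Sylow's theorem applied inside the finite group $C_\ell \wr S_s$ then conjugates $A'$ into the geometric realization of $A$.

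The key tool is the monomiality of $\ell$-groups: every irreducible $\overline{\Q}$-representation of an $\ell$-group is induced from a $1$-dimensional character of a subgroup. Descending to $\Q$ by Galois and applying Clifford theory to the $\Q[A']$-isotypic decomposition of $\Q^n$ produces the permutation action of $A'$ on the family of $(\ell-1)$-dimensional subspaces. Higher order elements (for instance a cyclic subgroup of order $\ell^k$ acting faithfully on $\Q^{\ell^{k-1}(\ell-1)}$) fit in naturally via the nested wreath structure $C_\ell \wr C_\ell \wr \cdots \wr C_\ell$ that sits inside $A$.

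The main obstacle is the equivariant bookkeeping: producing the decomposition $A'$-stably rather than one $\Q[A']$-irreducible at a time, and matching the nested wreath levels to the $\ell$-adic expansion of $r$. The case $\ell = 2$ requires the orthogonal-group variant of \S 1.3.4, since the maximal object there is the hyperoctahedral group $2^n\cdot S_n$ acting on $\Q^n$ rather than a wreath product of copies of $S_2$ alone.
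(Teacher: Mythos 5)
Your plan is genuinely different from the paper's. The paper's proof of Theorem $1^\prime$ (and hence of the corollary) is \emph{soft}: one conjugates $A, A'$ into $\GL_n(\Z)$, reduces mod a prime $p$ chosen so that $A$ becomes a full $\ell$-Sylow of $\GL_n(\F_p)$, applies Sylow's theorem in the finite group $\GL_n(\F_p)$ to get an injection $i\colon A'\hookrightarrow A$ induced by conjugation, and then lifts: since $p\neq\ell$, the two $\Q$-representations $A'\to\GL_n(\Q)$ and $A'\xrightarrow{i}A\to\GL_n(\Q)$, being isomorphic mod $p$, are already isomorphic over $\Q$. No structural information about $A'$ is needed. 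You instead attempt a \emph{structural} proof by exhibiting an explicit $A'$-stable decomposition of $\Q^n$ and matching it to the wreath realization of $A$. This would, if completed, give more explicit information, but it is harder.

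The genuine gap is the decomposition claim itself: that $\Q^n$ splits $A'$-stably as $L_0\oplus L_1\oplus\cdots\oplus L_s$ with $L_0$ the fixed space, each $L_i$ of dimension $\ell-1$, $A'$ permuting $\{L_i\}_{i\ge 1}$, and each stabilizer acting through the standard $(\ell-1)$-dimensional representation of $C_\ell$. You state this and point to Blichfeldt plus ``descending to $\Q$ by Galois and applying Clifford theory,'' but that descent is exactly the hard point and is not carried out: Blichfeldt gives a monomial line-system over $\overline{\Q}$, and there is no reason \emph{a priori} that it can be chosen $\Gamma_\Q$-stably. What you are asserting is in effect that every $\Q$-representation of an $\ell$-group with no trivial constituent is ``$\Q$-monomial'' in your block sense; for $\ell=2$ this says every $2$-subgroup of $\GL_n(\Q)$ is conjugate on the non-fixed part to a group of signed permutation matrices, which you neither prove nor reference. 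There are also two smaller unaddressed points: (i) having an $A'$-stable block decomposition with stabilizers in $C_\ell$ does not by itself place $A'$ inside $C_\ell\wr S_s$ --- one must choose the bases of the $L_i$ coherently, transporting a basis of a base block along coset representatives of the stabilizer; (ii) you need $s=r=[n/(\ell-1)]$, which requires the counting argument $s+v_\ell(s!)<M(n,\ell)$ for $s<r$. None of these is fatal in spirit, but as written the proposal does not close the argument, whereas the paper's mod-$p$ reduction avoids all of them.
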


\vskip 0.5cm
\noindent{\sl Proof of theorem $1^\prime$.} See Bourbaki, [LIE III], \S7, exerc.6 f) where only the case $\ell > 2$ is given, and Feit [Fe 97] who does the case $\ell = 2$. Let us sketch Bourbaki's method (which we shall use in \S6.6 in a more general setting):

We may assume that $A$ and $A^\prime$ are contained in $\GL_n(\Z)$. Choose a prime $p$ as in 1.3.3, and reduce mod $p$. The groups $A$ and $A^\prime$ then become {\mbox{$\ell$-subgroups}} of $G_p = \GL_n(\Z/p\Z)$, and $A$ is an $\ell$-Sylow of $G_p$. By Sylow's theorem applied to $G_p$, one finds an injection $i : A^\prime \rightarrow A$ which is induced by an inner automorphism of $G_p$. The two linear representations of $A^\prime$:
$$
A^\prime \rightarrow \GL_n(\Q)\quad {\mbox {\rm and}}\quad A^\prime \stackrel{i}{\rightarrow} A \rightarrow \GL_n(\Q)$$
become isomorphic after reduction mod $p$. Since $p \not= \ell$, a standard argument shows that they are isomorphic over $\Q$, which proves th.$1^\prime$ in that case. The case $\ell = 2$ can be handled by a similar, but more complicated, argument: if $n$ is odd, one uses orthogonal groups as in 1.3.4, and one reduces the case $n$ even to the case $n$ odd by the trick mentioned at the end of \S 1.3. \hfill$\Box$
\vskip 0.3cm
\noindent{\small{\sl Exercise.} Let $A(n)$ be a maximal 2-subgroup of $\GL_n(\Q)$. Show that the $A(n)$'s can be characterized by the following three properties:
\begin{eqnarray*}
A(1) & = & \{\pm 1\}.\\
A(2n) & = & \big(A(n)\times A(n)\big)\cdot\{\pm 1\} \,\, {\mbox{(wreath product) if $n$ is a power of $2$.}}\\
A(n) & = & A(2^{^{m_1}})\times \dots \times A(2^{^{m_k}})\,{\mbox{if $n$ = $2^{^{m_1}} \!\!+\dots + 2^{^{m_k}} $ with $m_1 < \dots < m_k$}}.
\end{eqnarray*}}

\vskip 0.5cm
\begin{center}
{\bf {\S 2. Schur}}
\vskip 0.3cm
\end{center}
\setcounter{section}{2}
\setcounter{subsection}{0}

Ten years after [Mi 87], Frobenius founded the theory of characters of finite
 groups. It was then (and still is now) very tempting to use that theory to 
give a different proof of Minkowski's results. The first people to do so were 
Schur ([Sch 05]) and Burnside ([Bu 11], Note G). Schur's paper is especially 
interesting. He works first over $\Q$, as Minkowski did, and uses a very 
original argument in character theory, see \S 2.1 below. He then attacks the 
case of an arbitrary number field, where he gets a complete answer, see \S 2.2.

\subsection{Finite linear groups with rational trace.} What Schur proves in \S1 of [Sch 05] is:

\begin{mytheorem2} Let $A$ be a finite $\ell$-subgroup of $\GL_n(\C).$ Assume that the traces of the elements of $A$ lie in $\Q$. Then $v_\ell(A) \le M(n,\ell )$, where $M(n,\ell)$ is as in th.$1$.
\end{mytheorem2}

The condition on the traces is obviously satisfied if $A$ is contained in $\GL_n(\Q)$. Hence th.2 is a generalization of th.1. (As a matter of fact, it is a genuine generalization only when $\ell = 2$; indeed, when $\ell > 2$, it is known, cf. [Ro 58], that a finite $\ell$-subgroup of $\GL_n(\C)$ with rational trace is conjugate to a subgroup of $\GL_n(\Q)$.)

\begin{proof} We start from the following general fact, which is implicit in [Sch 05] (and is sometimes called ``Blichfeldt's lemma"):

\begin{proposition}
Let $G$ be a finite subgroup of $\GL_n(\C)$ and let $X$ be the subset of $\C$ made up of the elements $\Tr(g)$ for $g \in G, g\not= 1.$ Let \mbox{$N = \prod (n-x)$} be the product of the $n-x$, for $x \in X$. Then $N$ is a non-zero integer which is divisible by $|G|.$
\end{proposition}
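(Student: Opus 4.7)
The plan is to establish three claims in turn: (a) $N \neq 0$, (b) $N \in \Z$, and (c) $|G|$ divides $N$. The heart of the matter is (c), and it will come out of a single averaging identity built from a carefully chosen polynomial.

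For (a), any $g \in G$ has finite order, hence is diagonalizable over $\C$ with eigenvalues that are roots of unity. If $\Tr(g) = n$, then the sum of these $n$ eigenvalues equals $n$, which (being a sum of unit-modulus complex numbers) forces every eigenvalue to equal $1$, i.e. $g = 1$. Thus $n \notin X$, and each factor $n-x$ in the product is non-zero. For (b), I would first note that if $g$ has order $m$, then $\Tr(g) \in \Z[\zeta_m]$ is an algebraic integer. Moreover, $X$ is stable under $\Gal(\overline{\Q}/\Q)$: for $\sigma \in \Gal(\overline{\Q}/\Q)$ acting on $\Q(\zeta_m)$ by $\zeta_m \mapsto \zeta_m^k$ with $(k,m)=1$, we have $\sigma(\Tr(g)) = \Tr(g^k)$, and $g^k \neq 1$ since $g \neq 1$ and $(k,m) = 1$. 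Thus $N$ is a Galois-invariant algebraic integer, so $N \in \Z$.

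The key step is (c). Define the polynomial
$$P(T) = \prod_{x \in X}(T - x) \in \Z[T],$$
which has integer coefficients by (b). By construction, $P(\Tr(g)) = 0$ for every $g \neq 1$, while $P(\Tr(1)) = P(n) = N$. Summing over $G$ gives
$$\sum_{g \in G} P(\Tr(g)) = N.$$
Now write $P(T) = \sum_{k \geq 0} a_k T^k$ with $a_k \in \Z$. Using the identity $(\Tr g)^k = \Tr(g^{\otimes k})$ on the $k$-th tensor power $V^{\otimes k}$ of the standard representation $V = \C^n$, we obtain
$$\sum_{g \in G} (\Tr g)^k = \sum_{g \in G} \Tr\bigl(g|_{V^{\otimes k}}\bigr) = |G| \cdot \dim\bigl((V^{\otimes k})^G\bigr),$$
which is a non-negative integer multiple of $|G|$. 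Plugging back,
$$N = \sum_{k} a_k \cdot |G| \cdot \dim\bigl((V^{\otimes k})^G\bigr),$$
so $|G|$ divides $N$, completing the proof.

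The only conceptual obstacle is recognizing that the $k$-th power of the character can be read as the character of a genuine representation (the tensor power), thereby turning the expansion of $P$ into a sum of multiples of $|G|$ via the standard projector formula $\dim W^G = \frac{1}{|G|}\sum_g \Tr(g|_W)$. Everything else is bookkeeping: finiteness of order to get algebraic integrality, Galois stability to descend to $\Z$, and diagonalizability to rule out $\Tr(g) = n$ for $g \neq 1$.
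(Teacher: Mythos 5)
Your proof is correct and follows the same route as the paper's: the non-vanishing and integrality arguments are identical, and your divisibility step is exactly the paper's application of the fact that $\sum_{g}\chi(g)$ is divisible by $|G|$ for any generalized character $\chi$, applied to $\chi(g)=\prod_{x\in X}(\Tr(g)-x)$. The only difference is that you unpack the paper's ``well-known'' citation by explicitly realizing $\Tr(g)^k$ as the character of $V^{\otimes k}$ and invoking the projector formula $\dim W^G=\frac{1}{|G|}\sum_g\Tr(g|_W)$.
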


(Hence the knowledge of the set $X$ gives a multiplicative bound for the order of $G$.) 
\vskip 0.3cm
\noindent{\sl Proof.} Let $m = |G|$, and let $z$ be a primitive $m$-th root of unity. The elements of $X$ are sums of powers of $z$; hence they belong to the ring of integers of the cyclotomic field $K = \Q(z)$. This already shows that $N$ is an algebraic integer. If $s$ is an element of $\Gal(K/\Q)$, one has $s(z) = z^a$ for some $a\in (\Z/m\Z)^*$. If $x = \Tr(g)$, with $g\in G$, then $s(x) = \Tr(g^a)$, hence $s(x)$ belongs to $X$. This shows that $X$ is stable under the action of $\Gal(K/\Q)$; hence $N$ is fixed by $\Gal(K/\Q)$; this proves that $N$ belongs to $\Z$.

The factors of $N$ are $\not= 0$. Indeed, $\Tr(g)$ is equal to the sum of $n$ complex numbers $z_i$ with $|z_i| = 1$, hence can be equal to $n$ only if all the $z_i$ are equal to $1$, which is impossible since $g\not= 1$. This shows that $N\not= 0$ (one could also prove that $N$ is positive, but we shall not need it).

It remains to see that $N$ is divisible by $|G|$. It is well-known that, if $\chi$ is a generalized character of $G$, the sum $\sum_{g\in G} \chi(g)$ is divisible by $|G|$. Let us apply this to the function $g \mapsto \chi(g) = \prod_{x\in X}\big(\Tr(g) -x\big)$, which is a $\Z$-linear combination of the characters $g \mapsto \Tr(g)^m, m \ge 0$. Since $\chi(g) = 0$ for $g \not= 1$ and $\chi(1) = N$, the sum of the $\chi(g)$ is equal to $N$. Hence $N$ is divisible by $|G|$.\end{proof}

The next lemma gives an information on the $\Tr(g)$'s:

\begin{lemma} Let $A$ be as in th.$2$. If $g\in A$, then $\Tr(g)$ may be written as $n - \ell y$ with $y \in\Z$ and $0 \le y \le n/(\ell -1)$.
\end{lemma}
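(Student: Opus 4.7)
My plan is to exploit that $A$ is a \emph{group}: applying the hypothesis ``$\Tr(g')\in\Q$'' to $g' = g, g^2, g^3, \ldots$ and invoking Newton's identities, the characteristic polynomial $\chi_g(T)$ has all its coefficients in $\Q$. Since $g$ has $\ell$-power order, say $\ell^k$, its eigenvalues are $\ell^k$-th roots of unity, and the rationality of $\chi_g$ means that the multiset $E$ of eigenvalues of $g$ is stable under $\Gal(\Q(\zeta)/\Q)$, where $\zeta$ is a primitive $\ell^k$-th root of unity. This is the key conceptual ingredient; the rest is bookkeeping.

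Next I would decompose $E$ into Galois orbits. Let $n_0$ be the multiplicity of $1$ in $E$, and for $1\le j\le k$ let $a_j$ be the common multiplicity with which every primitive $\ell^j$-th root of unity occurs; the orbit of such primitives has cardinality $\phi(\ell^j)=(\ell-1)\ell^{j-1}$, so
$$
n \,=\, n_0 + \sum_{j\ge 1}(\ell-1)\ell^{j-1}\, a_j.
$$
Two classical facts let me compute $\Tr(g)$ orbit by orbit: the sum of the primitive $\ell$-th roots of unity is $-1$ (read off $\Phi_\ell$), while for $j\ge 2$ the identity $\Phi_{\ell^j}(T)=\Phi_\ell(T^{\ell^{j-1}})$ shows that $\Phi_{\ell^j}$ has no term just below its leading one, so the sum of the primitive $\ell^j$-th roots vanishes. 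Hence $\Tr(g) = n_0 - a_1$, and therefore
$$
n - \Tr(g) \,=\, \ell\, a_1 + \sum_{j\ge 2}(\ell-1)\ell^{j-1} a_j \,=\, \ell\,\Bigl(a_1 + \sum_{j\ge 2}(\ell-1)\ell^{j-2}\,a_j\Bigr).
$$
Calling this last parenthesis $y$, we obtain $\Tr(g)=n-\ell y$ with $y\in\Z_{\ge 0}$.

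For the upper bound on $y$ I would multiply through by $\ell-1$ and use the trivial inequality $(\ell-1)^2\ell^{j-2}\le (\ell-1)\ell^{j-1}$ term-by-term:
$$
(\ell-1)\,y \,\le\, (\ell-1)a_1 + \sum_{j\ge 2}(\ell-1)\ell^{j-1}a_j \,=\, n-n_0 \,\le\, n,
$$
which gives $y\le n/(\ell-1)$. There is no serious obstacle: the only step that might be overlooked is the promotion of ``rational trace'' to ``rational characteristic polynomial'', and with that in hand the Galois-orbit decomposition forces both the divisibility by $\ell$ and the upper bound automatically.
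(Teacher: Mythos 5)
Your proof is correct, and it rests on the same key insight as the paper's: the rationality of $\Tr(g^j)$ for all $j$ forces the eigenvalue multiset to be Galois-stable, so primitive $\ell^j$-th roots of unity occur with a common multiplicity $a_j$. The paper organizes this as a reduction to three ``pure'' cases (one $\alpha$-level at a time, using additivity of $n$, $\Tr$, and the bound), whereas you carry out the global computation with the $a_j$'s directly; this is a cosmetic difference, not a genuinely different route.
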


\begin{proof} Each eigenvalue of $g$ is of order $\ell^\alpha$ for some $\alpha \ge 0$, and all the eigenvalues with the same $\alpha$ have the same multiplicity. By splitting $\C^n$ according to the $\alpha$'s, one is reduced to the following three cases:

\noindent (1) $g = 1$ and $n = 1$. Here $\Tr(g) = 1$ and we take $y = 0.$

\noindent (2) $g$ has order $\ell$ and $n = \ell -1$. Here $\Tr(g) = -1$, and $y = 1$.

\noindent (3) $g$ has order $\ell^\alpha$ with $\alpha > 1$ and $n = \ell^{\alpha -1} (\ell -1)$. Here $\Tr(g) = 0$ and $y = \ell^{\alpha -2} (\ell -1).$

In each case we have $0 \le y \le n/(\ell -1).$\end{proof}
\vskip 0.3cm
\noindent{\sl End of the proof of theorem 2.} We apply prop.1 to $G = A$. By lemma 2, each factor $n-x$ of $N$ can be written as $\ell y$ with $1 \le y \le d = [n/\ell -1)]$. This shows that $N$ divides the product $\ell^d\cdot d$! and we have
$$
v_\ell(N) < d + v_\ell (d!) = [n/(\ell -1)] + [n/\ell (\ell -1)] + \dots = M(n,\ell ).
$$
Since $|G|$ divides $N$, this proves th.2. \hfill $\Box$

\vskip 0.3cm
\noindent{\sl Remark.} One may ask whether th.2 can be complemented by a conjugacy theorem analogous to th.$1^\prime$ of \S 1.5. The answer is of course ``yes" if $\ell > 2$ (because of th.$1^\prime$), but it is ``no" for $\ell = 2$: the dihedral group $D_4$ and the quaternion group $Q_8$ are non-conjugate $2$-subgroups of $\GL_2(\C)$, with rational trace, which have the maximal order allowed by th.2, namely 8.

\subsection{Replacing $\Q$ by an arbitrary number field.} This is what Schur does in \S\S 2-6 of [Sch 05]. Before stating his result, some notation is necessary: 

- $k$ is a number field, viewed as a subfield of $\C$.

- For each $a \ge 1$, $z_a$ denotes a primitive $a$-th root of unity. 

- (assuming $\ell > 2$). We put $t = [k(z_\ell ):k]$ and we denote by $m$ the maximal $a$ such that $k(z_\ell )$ contains $z_{_{\ell^a}}$ (this notation coincides with Schur's, and it will be extended to arbitrary fields in \S 4 of Lect.II). We put
$$
M_k(n,\ell ) = m\cdot \left[\frac{n}{t}\right] + \left[\frac{n}{\ell t}\right] + \left[\frac{n}{\ell^2t}\right] + \dots
$$
- (assuming $\ell = 2$).  We put $t = [k(i):k]$ and we define $m$ as explained in \S 4.2 (warning: $t$ and $m$ do not always coincide with Schur's $t_2$ and $m_2$). We put:
$$
M_k(n,2) = n + (m^\prime -1) \left[\frac{n}{t}\right] + \left[\frac{n}{2t}\right] + \left[\frac{n}{4t}\right] + \dots ,
$$
where $m^\prime$ is equal to $m+1$ in case (b) of \S4.2 and is equal to $m$ in the other cases.

The main result of [Sch 05] is:

\begin{mytheorem2'} Let $A$ be a finite $\ell$-subgroup of $\GL_n(\C)$ such that $\Tr(g)$ belongs to $k$ for every $g\in A$. Then $v_\ell (A) \le M_k(n,\ell )$.
\end{mytheorem2'}

\noindent Note that, when $k = \Q$, the integer $M_k(n,\ell )$ is equal to Minkowski's $M(n,\ell )$; hence th.2$^\prime$ is a generalization of th.2.

\begin{proof} I shall not give all the details of Schur's proof, but just
explain its main steps. For more information, see [Sch 05] (and also [GL 06] for the case $\ell > 2$).

One of the inputs of the proof is the following result, which had just been proved by Blichfeldt ([Bl 04] - see also \S3 below):

\subsubsection{Every linear representation of $A$ is monomial}${}$

Hence one can decompose the vector space $\C^n$ as a direct sum of $n$ lines $D_1,\dots ,D_n$ which are permuted by $A$. This gives a homomorphism $A \rightarrow S_n$; its kernel $A^\prime$ is a normal abelian subgroup of $A$. Hence:

\subsubsection{The group $A$ has a normal abelian subgroup $A^\prime$ such that $(A:A^\prime)$ divides $n!$}${}$

This led Schur to investigate the case where $A$ is abelian. He proved:

\subsubsection{If $A$ is as in th.$2^\prime$, and is abelian, then\ {\rm :}}
$$
v_\ell (A) \le \left\{ \begin{array}{ll}
m\cdot \left[\frac{n}{t}\right] &{\mbox{\sl if}}\,\, \ell > 2\\
&\\
(m^\prime - t)\cdot \left[\frac{n}{t}\right] + n & {\mbox{\sl if}}\,\, \ell = 2.\end{array}
\right.
$$

\noindent{\sl Sketch of proof.} Since $A$ is abelian, and the traces of its 
elements belong to $k$, it is conjugate to a subgroup of $\GL_n(k)$. Let $R$ 
be the $k$-subalgebra of ${\bf M}_n(k)$ generated by $A$.   We may write $R$ as a 
product $\prod K_i$, where the $K_i$ are cyclotomic extensions of $k$, of the 
form $k(z_{_{\ell^{a_i}}})$, with $a_i \ge 0$. Let $n_i = [K_i:k]$; then
$\sum n_i \le n$. The image of $A$ in $K^*_i$ is a cyclic group of order 
$\ell^{^{a_i}}$. If $\ell > 2$, it is not difficult to see that 
$a_i \le m\cdot\!\left[\frac{n_i}{t}\right]$ for every $i$. Adding up, we find 
$\sum a_i \le m\cdot \left[\frac{n}{t}\right]$, and since 
$v_\ell (A) \le \sum a_i$, we get the inequality (2.2.3). The case $\ell = 2$ is similar. 
\end{proof}

Once this is done, the case $\ell = 2$ follows. Indeed (2.2.2) and (2.2.3) give $v_2(A) \le v_2(A^\prime) + v_2(n!) \le n + (m^\prime -t)\cdot \left[\frac{n}{t}\right] + v_2(n!)$, and this is equivalent to $v_2(A) \le M_k(n,2)$. The case $\ell > 2$ requires more work, cf. [Sch 05], \S 5.\hfill $\Box$

\vskip 0.3cm
\noindent{\sl Remarks}

1) The bound $v_\ell(A) \le M_k(n,\ell )$ is {\sl optimal}; this is proved by the same explicit constructions as in \S 1.4, cf. [Sch 05], \S 6.

2) As we already pointed out in \S 2.1, the hypothesis $\Tr(A) \subset k$ implies, when $\ell > 2$, that $A$ is conjugate to a subgroup of $\GL_n(k)$. One may then use Minkowski's method, as will be explained in \S 6 for semisimple algebraic groups (of course $\GL_n$ is not semisimple, but the method applies with almost no change -- the invariant degrees $d_i$ of \S 6 have to be replaced by $1,2,\dots ,n)$. The bound found in that way coincides with Schur's. 

For $\ell = 2$, if one does not assume that $A$ can be embedded in $\GL_n(k)$, I do not see how to apply either Minkowski's method or the cohomological method of \S 6.8. This raises interesting questions. For instance, consider a finite subgroup $A$ of $E_8(\C)$, and suppose that the conjugacy classes of the elements of $A$ are $\Q$-rational. Is it true that $v_2(A) \le 30$, $v_3(A) \le 13, \dots ,$ as would be the case if $A$ were contained in the rational points of a $\Q$-form of $E_8$, cf. \S 6.3.2 ?

\vskip 0.5cm
\begin{center}
{\bf {\S 3. Blichfeldt and others}}
\vskip 0.3cm
\end{center}
\setcounter{section}{3}
\setcounter{subsection}{0}

Blichfeldt's theorem (\S 3.1 below) has already been used in \S 2.2. The results of \S 3.3 will be applied in \S 5.4, in order to prove what I call the ``S-bound". 

\subsection{Blichfeldt's theorem.} Recall that a finite group $A$ is called {\sl supersolvable} if it has a composition series
$$
1 = A_0 \subset A_1 \subset \dots \subset A_m = A
$$
where the $A_i$ are normal in $A$ (and not merely in $A_{i+1})$ and the quotients $A_i/A_{i-1}$ are cyclic. One has
\begin{center}
nilpotent $\Rightarrow$ supersolvable $\Rightarrow$ solvable.
\end{center}
In particular, an $\ell$-group is supersolvable.

One proves easily:

$(^*)$ {\sl If $A$ is supersolvable and non abelian, there exists an abelian normal subgroup $A^\prime$ of $A$ which is not contained in the center of $A$.}

Recall also that a linear representation $V$ of a group $A$ is called {\sl monomial} if one can split $V$ as a direct sum of lines which are permuted by $A$. When $V$ is irreducible, this amounts to saying that $V$ is induced by a 1-dimensional representation of a subgroup of $A$.

We may now state Blichfeldt's theorem ([Bl 04], see also [Bu 11], \S 258):

\begin{mytheorem3} Every complex linear representation of a supersolvable finite group is monomial.
\end{mytheorem3}

\noindent (As a matter of fact, Blichfeldt was only interested in the case where $A$ is nilpotent.)

\begin{proof}
The argument is now standard. We may assume that the given representation $V$ is irrreducible and faithful. If $A$ is abelian, we have $\dim V = 1$ and there is nothing to prove. If not, we choose 
$A^\prime$ as in ${(^*)}$ above, and we split $V$ as $V = \oplus V_\chi$, where $\chi$ runs through the 1-dimensional characters of $A^\prime$, and $V_\chi$ is the corresponding eigenspace; let $V_\psi$ be a non-zero $V_\chi$; it is distinct from $V$ (otherwise, $A^\prime$ would be central), and every non-zero $V_\chi$ is an $A$-transform of $V_\psi$ (because $V$ is irreducible). Call $B$ the subgroup of $A$ stabilizing $V_\psi$. We have $A^\prime \subset B \subset A$, and $|B| < |A|$. Using induction on $|A|$, we may assume that th.3 is true for $B$; this gives a splitting of $V_\psi$ as a direct sum of lines which are stable under $B$. By transforming them by $A$, we get the desired splitting of $V$.
\hfill$\Box$

\subsection{Borel-Serre.} In [BS 53], Borel and I proved:

\begin{mytheorem3'}
Let $G$ be a compact real Lie group, and let $A$ be a finite supersolvable subgroup of $G$. There exists a maximal torus $T$ of $G$ which is normalized by $A$. 
\end{mytheorem3'}

\vskip 0.3cm
\noindent{\sl Remark.} When one applies th.3$^\prime$ to $G = \U_n(\C)$, one recovers th.3. Hence th.3$^\prime$ may be viewed as a generalization of Blichfeldt's theorem. 

\vskip 0.3cm
\noindent{\sl Proof of theorem 3 \!$^\prime$} ({\sl sketch}).

\begin{lemma}
Let $\mathfrak{g}$ be a finite dimensional Lie algebra over a field of characteristic 0, and let $s$ be an automorphism of prime order of $\mathfrak{g}$. If $s$ has no fixed point $\not= 0$, then $\mathfrak{g}$ is nilpotent.
\end{lemma}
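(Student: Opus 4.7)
The plan is to work over the algebraic closure of the ground field, extended if necessary so as to contain a primitive $p$-th root of unity $\zeta$ (where $p$ is the order of $s$); this is harmless because nilpotency of a finite-dimensional Lie algebra is preserved and reflected under extension of scalars, and the hypothesis $\g^s = 0$ is stable under base change. Then $s$ is diagonalisable and yields the eigenspace decomposition $\g = \bigoplus_{i \in \Z/p\Z} \g_i$, with $s$ acting as $\zeta^i$ on $\g_i$; the hypothesis says precisely that $\g_0 = 0$. Since $s$ is an automorphism, $[\g_i, \g_j] \subseteq \g_{i+j}$, and in particular $[\g_i, \g_{-i}] = 0$ for every $i$. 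The key observation is: for any $x \in \g_i$ with $i \ne 0$ one has $\ad(x)^p = 0$. Indeed $\ad(x)$ shifts the grading by $i$, and since $p$ is prime and $i$ is a unit in $\Z/p\Z$, the residues $j, j+i, \dots, j+(p-1)i$ cover all classes mod $p$; so for each $j$, iterating $\ad(x)$ eventually lands in $\g_0 = 0$, forcing $\ad(x)^p = 0$ on $\g_j$.

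Next I would establish that $\g$ is solvable by checking that its Killing form $B$ vanishes identically. The grading forces $B(\g_i, \g_j) = 0$ whenever $i+j \not\equiv 0 \pmod p$, so only the pairings $B : \g_i \otimes \g_{-i} \to k$ remain to be controlled. But for $x \in \g_i$ and $y \in \g_{-i}$ the operators $\ad(x)$ and $\ad(y)$ are both nilpotent by the key observation, and they commute, because $\ad([x,y]) = 0$ (as $[x,y] \in \g_0 = 0$); hence $\ad(x)\ad(y)$ is nilpotent and $B(x,y) = \Tr(\ad(x)\ad(y)) = 0$. Thus $B \equiv 0$ on $\g$, and Cartan's solvability criterion yields the solvability of $\g$.

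To upgrade solvable to nilpotent, I would apply Lie's theorem to the adjoint representation of $\g$ on itself: since $\g$ is solvable and $k$ is algebraically closed of characteristic $0$, there is a basis of $\g$ in which every $\ad(y)$ is upper triangular, with diagonal entries $\lambda_\alpha(y)$ given by linear functionals $\lambda_\alpha : \g \to k$. For any homogeneous $x \in \g_i$ with $i \ne 0$, $\ad(x)$ is nilpotent, hence has zero diagonal in this basis, so $\lambda_\alpha(x) = 0$ for every $\alpha$. Since $\g$ is linearly spanned by such homogeneous elements, each $\lambda_\alpha$ vanishes identically on $\g$; thus every $\ad(y)$ is upper triangular with zero diagonal, hence nilpotent, and Engel's theorem gives the nilpotency of $\g$. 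The main obstacle I expect is precisely this last passage from ``homogeneous elements are $\ad$-nilpotent'' to ``every element is $\ad$-nilpotent'': a sum of nilpotents need not be nilpotent, which is why the Killing-form detour through Cartan's criterion and Lie's theorem seems unavoidable.
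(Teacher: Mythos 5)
Your proof is correct, and it follows essentially the same route as the paper's sketch: eigenspace decomposition for $s$, the observation that $\ad(x)$ is nilpotent on homogeneous $x$ (the paper notes the slightly sharper $\ad(x)^{p-1}=0$), vanishing of the Killing form, Cartan's criterion for solvability, and then the upgrade to nilpotency. The paper leaves the Killing-form vanishing and the final solvable-to-nilpotent step as ``one then shows'' and ``follows easily'' (citing [BS 54] \S 4 for details); you have simply supplied those details, with the grading/commuting-nilpotents argument for $B\equiv 0$ and the Lie--Engel argument for nilpotency.
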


\noindent (Note the analogy with a - much deeper - theorem of Thompson \linebreak[4][Th\nolinebreak[4] 60-64]: if a finite group $G$ has an automorphism of prime order with no non-trivial fixed point, then $G$ is nilpotent.)

\vskip 0.3cm
\noindent{\sl Proof of lemma 3.} By extending scalars, we may assume that the ground field is algebraically closed. Let $p$ be the order of $s$, and let $z$ be a primitive $p$-th root of unity. Let $\mathfrak{g}_i$ be the kernel of $s - z^i$ in $\mathfrak{g}$. We have
$$
\mathfrak{g} = \mathfrak{g}_0 \oplus \mathfrak{g}_1 \oplus \dots \oplus \mathfrak{g}_{p-1}\, ,
$$
 and the hypothesis made on $s$ means that $\mathfrak{g}_0 = 0$. One then shows that $\ad(x)^{p-1} = 0$ for every $x$ belonging to one of the $\mathfrak{g}_i$'s. This implies that the Killing form of $\mathfrak{g}$ is 0, hence that $\mathfrak{g}$ is solvable (Cartan's criterion). The fact that $\mathfrak{g}$ is nilpotent follows easily. (For more details, see \S 4 of [BS 54].)\hfill$\Box$
\vskip0.2cm 
 Once this is done, th.3 \!\!$^\prime$ is proved by an induction argument similar to the one used in the proof of Blichfeldt's theorem, cf. [BS 53], \S 3.\end{proof}

\subsection{Steinberg and Springer-Steinberg.} We now come to the setting of
 linear algebraic groups. Let $k$ be a field, and let $G$ be an algebraic
 group over $k$. We shall assume in what follows that $G$ is linear and 
smooth over $k$; the connected component of the identity of $G$ is denoted by 
$G^\circ$. Recall that $G$ is said to be {\sl reductive} if it is connected 
and if its unipotent radical (over an algebraic closure of $k$) is trivial, cf. [Bo 91], \S 11.21. If $k = \C$,
 such groups correspond (by a standard dictionary, cf. [Se 93], \S 5) to the 
connected compact Lie groups. [In the literature, a group $G$ such that 
$G^\circ$ is reductive is sometimes called ``reductive"; this is reasonable in
 characteristic 0, but not otherwise. Here we prefer that ``reductive" implies ``connected".]

Theorem 3 \!\!$^\prime$ has the following analogue:

\begin{mytheorem3''} Let $A$ be a finite supersolvable group of order prime to $\car(k)$ and let $G$ be a reductive group over $k$ on which $A$ acts by $k$-automorphisms. Then there exists a maximal torus $T$ of $G$, defined over $k$, which is stable under the action of $A$.
\end{mytheorem3''}

\noindent (When $k = \C$, this is equivalent to th.3 \!$^\prime$, thanks to the dictionary mentioned above.)

\begin{corollary}
If $A$ is a finite supersolvable subgroup of $G(k)$, of order prime to $\car (k)$, there is a maximal $k$-torus $T$ of $G$ whose normalizer $N$ is such that $A$ is contained in $N(k)$.
\end{corollary}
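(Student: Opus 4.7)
The plan is to deduce the corollary as a direct specialization of Theorem 3$''$, taking the action of $A$ on $G$ to be the conjugation action coming from the embedding $A \hookrightarrow G(k)$.

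First I would note that because $A$ sits inside $G(k)$, each element $a \in A$ gives an inner automorphism $\operatorname{Int}(a) \colon G \to G$, $g \mapsto a g a^{-1}$, which is an automorphism of the algebraic group $G$ defined over $k$ (it is the restriction to $G$ of a morphism of $k$-schemes, since $a$ is a $k$-point). This produces a homomorphism $A \to \Aut_k(G)$, so $A$ acts on $G$ by $k$-automorphisms in the sense required by Theorem 3$''$. The remaining hypotheses of Theorem 3$''$ — that $A$ is finite, supersolvable, and of order prime to $\car(k)$ — are granted by assumption.

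Applying Theorem 3$''$, I obtain a maximal $k$-torus $T$ of $G$ that is stable under this action, meaning $a T a^{-1} = T$ for every $a \in A$. By the very definition of the (scheme-theoretic) normalizer $N = N_G(T)$, this stability is equivalent to the assertion that each $a \in A$ lies in $N(k)$; hence $A \subseteq N(k)$, which is exactly what is to be proved.

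There is no substantive obstacle: the argument is just the translation ``$A$ normalizes $T$'' $\Longleftrightarrow$ ``$T$ is $A$-stable under conjugation'', combined with Theorem 3$''$. The only point worth flagging is that one is using inner automorphisms by $k$-rational elements (so that they descend to automorphisms over $k$, not merely over $\bar k$), which is precisely why the hypothesis $A \subset G(k)$ — rather than $A \subset G(\bar k)$ — is the natural one for this corollary.
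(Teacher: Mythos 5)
Your proof is correct and is exactly the intended specialization: apply Theorem 3$^{\prime\prime}$ to the conjugation action of $A \subset G(k)$ on $G$, and observe that $A$-stability of $T$ is the same as $A \subseteq N_G(T)(k)$. The paper treats this as immediate, and your remark about using $k$-rational inner automorphisms (so that the action is by $k$-automorphisms, as Theorem 3$^{\prime\prime}$ requires) correctly identifies the one point worth noting.
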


\noindent (Recall that, if $X$ is a $k$-variety, $X(k)$ is the set of $k$-points of $X$.)

\vskip 0.3cm
\noindent{\sl Proof of theorem 3 \!$^{\prime\prime}$.} When $k$ is algebraically closed, this is proved in [SS 68], I.5.16, with the help of several results from [St 68]. For an arbitrary field $k$, the same proof works with very little change. One starts with the following basic result of Steinberg ([St 68], th.7.2):

\begin{proposition}
 Assume $k$ is algebraically closed. Let $s : G \rightarrow G$ be a surjective homomorphism. Then there exists a Borel subgroup $B$ of $G$ such that $s(B) = B$.
 \end{proposition}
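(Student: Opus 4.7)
The content of Proposition 2 is equivalent to a fixed-point statement for the induced morphism on the flag variety. Let $\mathcal{B}$ be the variety of Borel subgroups of $G$, a smooth projective $G$-homogeneous variety. Since $s$ is a surjective morphism of the connected group $G$, its kernel is finite, so $s$ sends any Borel $B$ to a closed connected solvable subgroup of the same dimension, hence again to a Borel. This gives a well-defined morphism $\tilde{s}: \mathcal{B} \to \mathcal{B}$, $B \mapsto s(B)$, and the proposition asserts that $\tilde{s}$ has a fixed point on $\mathcal{B}$.

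My plan is to produce such a fixed point by a Lefschetz-type trace argument, after a preliminary minimality reduction. First, using the descending chain condition on closed subsets of the Noetherian space $\mathcal{B}$, I would extract a non-empty closed $\tilde{s}$-stable subset $Y \subseteq \mathcal{B}$ that is minimal among such. Because $\tilde s$ is dominant, $\tilde s(Y) \subseteq Y$ is again closed and $\tilde s$-stable, so equality holds; and because $\tilde s$ permutes the irreducible components of $Y$, minimality forces $Y$ to be irreducible. It then suffices to exhibit a fixed point inside $Y$. For this I would invoke the Lefschetz trace formula in $\ell$-adic cohomology (with $\ell \neq \car(k)$): the ring $H^*(\mathcal{B}, \Q_\ell)$ has the Schubert basis $\{[X_w]\}_{w \in W}$, where $W$ is the Weyl group of $G$; after choosing compatible reference data, $\tilde s^*$ acts on this basis through the automorphism of $W$ induced by $s$ (essentially through an automorphism of the Dynkin diagram). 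Computing $\sum_i (-1)^i \tr(\tilde s^* \mid H^i(\mathcal{B}, \Q_\ell))$ should then yield a strictly positive integer, forcing the existence of a fixed point.

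The principal obstacle is the cohomological input: one must verify that $\tilde s^*$ really acts on $H^*(\mathcal{B}, \Q_\ell)$ through a Weyl-group automorphism, even though $s$ need not preserve any pre-chosen pair $(T, B_0)$, and one must check positivity of the resulting Lefschetz sum (which amounts to a count, with signs, of fixed classes of the Weyl-group action). An alternative, more group-theoretic route that avoids $\ell$-adic cohomology is induction on the semisimple rank of $G$: reduce first to producing a proper $\tilde s$-stable parabolic $P$, then apply the inductive hypothesis to the reductive quotient $P/R_u(P)$ together with the induced surjective endomorphism, and lift the resulting fixed Borel back to $G$; the base case is a torus, where the only Borel is $G$ itself. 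In either approach the decisive geometric input is that closed subvarieties of $\mathcal{B}$ are highly constrained by Bruhat theory, and this rigidity is what ultimately prevents $\tilde s$-stable subvarieties from being fixed-point free.
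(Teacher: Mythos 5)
The paper itself gives no proof of this proposition: it is Steinberg's Theorem~7.2 in [St 68], and Serre simply cites it and moves on. So there is no ``paper's own proof'' to compare your argument against; the question is only whether your proposal stands on its own, and I do not think it does.

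Your reduction to a fixed point of $\tilde s$ on the flag variety $\mathcal B$ is correct: a surjective endomorphism $s$ of $G$ has finite kernel, so $s(B)$ is a closed connected solvable subgroup of full Borel dimension, hence again a Borel, and $\tilde s$ is a well-defined finite surjective self-map of the projective variety $\mathcal B$. But both of the routes you then sketch leave the essential step open, and you are candid that they do. The Lefschetz route hinges on the claim that $\tilde s^*$ acts on the Schubert basis of $H^*(\mathcal B,\Q_\ell)$ via a Weyl-group (or Dynkin-diagram) automorphism. That claim already presupposes that $s$ can be normalized relative to some pair $(T,B_0)$ --- which is precisely the conclusion you are after, so as stated the argument is circular. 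Without such a normalization, one only knows that $\tilde s^*$ is a graded ring endomorphism with $\tilde s^*=1$ on $H^0$ and $\tilde s^*=\deg\tilde s\ge 1$ on top cohomology; positivity of the full alternating trace is not automatic from this, and you do not supply it. The preliminary minimality reduction also gains nothing: if the minimal closed $\tilde s$-stable set $Y$ contained a fixed point $p$, then $\{p\}$ would be a strictly smaller closed $\tilde s$-stable set, so ``find a fixed point in $Y$'' is the same as ``show $Y$ is a singleton'' --- a restatement, not a reduction. (Minimality also does not quite force $Y$ irreducible: $Y$ could be a union of disjoint components cyclically permuted by $\tilde s$.) The inductive route is stuck at its first step: ``produce a proper $\tilde s$-stable parabolic $P$'' is the whole problem in another guise, and you give no mechanism for producing $P$.

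For the record, Steinberg's proof in [St 68] uses neither $\ell$-adic cohomology nor induction on rank; it is a direct geometric argument on $\mathcal B\times\mathcal B$, comparing the graph of $\tilde s$ with the Bruhat stratification by $G$-orbits $O_w$ and exploiting the finiteness of $\tilde s$ to force the graph to meet the diagonal $O_1$. If you want a complete proof rather than a framework, that is the missing mechanism; the two sketches you give do not supply a substitute for it.
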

 
 When $s$ has finite order prime to $\car(k)$, one can say much more:

\begin{proposition}
Let $s$ be an automorphism of $G$ of finite order prime to $\car(k)$, and let $G^s$ be the subgroup of $G$ fixed by $s$. Then\hbox{ \!{\rm :}} 

{\rm a)} The connected component of $G^s$ is reductive.

{\rm b)} One has $\dim G^s > 0$ if $G$ is not a torus.

{\rm c)} If $k$ is algebraically closed, there exists a Borel subgroup $B$ of $G$ and a maximal torus $T$ of $B$ such that $s(B) = B$ and $s(T) = T$.
\end{proposition}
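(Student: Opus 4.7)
The plan is to deduce all three parts from Proposition 2, using crucially the hypothesis that the order $n$ of $s$ is invertible in $k$. That hypothesis has two standard consequences used throughout: firstly, $H^1(\langle s\rangle, U) = 0$ for every $s$-stable connected unipotent group $U$; secondly, the fixed-point scheme $G^s$ is smooth, with $\operatorname{Lie}(G^s)^\circ = \mathfrak g^s$.

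For part (c), Proposition 2 furnishes an $s$-stable Borel subgroup $B \subset G$. The set of maximal tori of $B$ is a principal homogeneous space under $U = R_u(B)$ (acting by conjugation), so the existence of an $s$-fixed maximal torus is equivalent to the vanishing of an obstruction class in $H^1(\langle s\rangle, U)$, which holds by the first consequence above. Concretely, given any maximal torus $T_0 \subset B$ with $s(T_0) = u_0 T_0 u_0^{-1}$, I would solve $v^{-1}s(v) = u_0$ in $U$ by descent along an $s$-stable central series of $U$ with $\mathbb G_a$ quotients, reducing at each stage to the surjectivity of $1 - \zeta$ on $\mathbb G_a$ for a root of unity $\zeta \neq 1$ (or to the trivial obstruction when $\zeta = 1$).

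For part (a), I would consider the semidirect product $G' = G \rtimes \langle s \rangle$, whose identity component is $G$. The element $s \in G'$ is semisimple (its order being prime to $\operatorname{char}(k)$), and a direct computation shows $(G^s)^\circ = C_{G'}(s)^\circ$. Steinberg's theorem on centralizers of semisimple elements (\cite[Thm.~9.1]{St 68}), applied in this slightly non-connected setting, then gives that $C_{G'}(s)^\circ$ is reductive. For part (b), assume $G$ is not a torus, hence not solvable. Using the $s$-stable maximal torus $T$ of (c), if $\dim T^s > 0$ we are done; otherwise Steinberg's finiteness criterion \cite[Thm.~7.5]{St 68}, which states that a connected algebraic group with a surjective endomorphism having finite fixed-point set is necessarily solvable, applied to $s$, yields a contradiction—hence $\dim G^s > 0$.

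The main obstacle is the precise justification of the Steinberg inputs: extending Theorem~9.1 from a connected reductive group to our setting where the ambient $G' = G \rtimes \langle s \rangle$ is not connected, and verifying that Theorem~7.5 applies to the finite-order automorphism $s$. Both extensions are essentially formal, since the proofs in \cite{St 68} only use that the ambient group has reductive identity component and that the element is semisimple, but the details must be checked. An alternative, more hands-on, route for (a) and (b) would be to use smoothness of $G^s$ to identify $\operatorname{Lie}(G^s)^\circ = \mathfrak g^s$, then analyze this Lie algebra directly via the $s$-equivariant root-space decomposition $\mathfrak g = \mathfrak t \oplus \bigoplus_\alpha \mathfrak g_\alpha$ with respect to the $s$-stable torus from (c): reductivity of $\mathfrak g^s$ follows from the non-degenerate pairing between the $s$-fixed contributions of opposite $s$-orbits $[\alpha]$ and $[-\alpha]$, and positive dimension when $G$ is not a torus follows from identifying a nonzero $s$-fixed vector in some root contribution (noting that if $\dim T^s = 0$ then any $s$-orbit on $\Phi(G,T)$ has vanishing sum, constraining the orbit structure strongly).
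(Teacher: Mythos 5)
Your treatment of (c) matches the cited route in [SS 68], I.2.9: start from the $s$-stable Borel $B$ furnished by prop.\,2, then find an $s$-fixed maximal torus of $B$ by descent along a filtration of $R_u(B)$ with $\G_a$ quotients, using that $|\langle s\rangle|$ is prime to $\car(k)$ to make the relevant $H^1$ vanish. Your approach to (a) via the semidirect product $G \rtimes \langle s\rangle$ and Steinberg's centralizer theorem is a reasonable way to re-derive what the paper simply cites as [St 68], cor.\,9.4 (which is precisely the automorphism version). The problem is part (b). The statement you attribute to Steinberg --- that a connected algebraic group admitting a surjective endomorphism with finite fixed-point set is necessarily solvable --- is false as worded: the Frobenius endomorphism of any connected reductive group over $\overline{\F}_p$ has finite fixed-point set. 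Even if one restricts to finite-order automorphisms, no such ``finiteness criterion'' appears in [St 68] in a form you could legitimately invoke here; it is essentially equivalent to the very assertion (b) you are trying to prove, via the reductive quotient. Your fallback via the root-space decomposition of $\mathfrak g^s$ heads in the right direction but stops at ``constraining the orbit structure strongly'' without extracting a contradiction, and in any case it looks at the wrong place: you never need the root spaces.

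The clean argument, and the one the paper uses, is the following. Given the $s$-stable pair $(B,T)$ from (c), $B$ determines a basis of the root system $R(G,T)$, hence the canonical cocharacter $h : \G_m \rightarrow T$ equal to twice the sum of the corresponding simple coroots. Since $s$ stabilizes $(B,T)$, it permutes the simple coroots, so canonicity gives $s\circ h = h$; in other words $h$ is a nonzero $s$-fixed element of the cocharacter lattice $Y(T)$ as soon as $R(G,T)$ is non-empty, i.e.\ as soon as $G$ is not a torus. Then $\Im(h)$ is a one-dimensional subtorus of $T^s\subset G^s$, so $\dim G^s>0$. Equivalently, in the language of your sketch: if $\dim T^s=0$ then $1$ is not an eigenvalue of $s$ on $Y(T)\otimes\Q$, yet $h$ is a nonzero fixed vector there --- already a contradiction, before any mention of root spaces. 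I would replace your paragraph on (b) by this.
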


\vskip 0.3cm\noindent{\sl Proof} ({\sl sketch}). We may assume $k$ is algebraically closed, since assertions a) and b) are ``geometric". A proof of a) is given in [St 68], cor.9.4. A proof of c) is given in [SS 68], I.2.9, as an application of prop.2. Assertion b) follows from c) by the following method of Steinberg: one observes that a pair $(B,T)$ with $B\supset T$, determines {\sl canonically} a homomorphism $h : \G_m \rightarrow T$ (indeed $B$ gives a basis of the root system of $(G,T)$, and one takes for $h$ twice the sum of the corresponding coroots). Moreover, $h$ is non-trivial if $G$ is not a torus. The canonicity of $h$ implies that it is fixed by $s$. Hence $G^s$ contains $\Im(h)$.\hfill$\Box$
\vskip 0.3cm
\noindent{\sl End of the proof of th.3 \!$^{\prime\prime}$}. By induction on 
$|A| + \dim G$. When $A = 1$, one takes for $T$ any maximal $k$-torus of 
$G$; by a theorem of Grothendieck, there is such a torus 
(cf. [Bo 91], th.18.2). We may thus assume $A\not= 1$. In that case $A$ 
contains a cyclic subgroup $<s>$, non-trivial, which is normal. We may also 
assume that $G$ is semisimple and that $A$ acts faithfully. Let $G_1$ be the 
connected component of $G^s$; we have $\dim G_1>0$, cf. prop.3 b). The group 
$A/A^\prime$ acts on $G_1$. By the 
induction assumption, there is a maximal torus $T_1$ of $G_1$, defined over 
$k$, which is stable under the action of $A/A^\prime$, hence of $A$. 
Let $G_2$ be the centralizer of $T_1$ 
in $G$. It is a reductive group of the same rank as $G$. We have 
$\dim G_2 < \dim G$, since $T_1$ is not contained in the center of $G$. 
Moreover, $G_2$ is stable under the action by $A$. By applying the 
induction assumption to the pair $(G_2,A)$ we get a maximal $k$-torus $T$ 
of $G_2$ which is $A$-stable. Since $G_2$ and $G$ have the same rank, $T$ 
is a maximal torus of $G$. \hfill$\Box$

%%%%%%%%%%%%%%%%%%%%%%%%%%%%%%%%%%%%%%%%%%%%%%%%%%%%%%%%%%%%%%
%%%%%%%%%%%%%%%%%%%%%%%%%%%%%%%%%%%%%%%%%%%%%%%%%%%%%%%%%%%%%%
%\include{serredonna_2}
%\markboth{J.-P. SERRE, ORDERS OF FINITE SUBGROUPS}{bidon}
 
\markright{LECTURE II: UPPER BOUNDS}
%\markleft{J.-P. SERRE, ORDERS OF FINITE SUBGROUPS}

%\begin{center}
%\bf{\Large
%Lecture II

%Lower bounds: The Minkowski method}
%\end{center}

%\part*{Lecture II \\  Upper bounds}

\specialsection*{\bf  II. Upper bounds}

\setcounter{section}{4}
\setcounter{subsection}{-1}
\setcounter{theorem}{3}
Let $G$ be a reductive group over a field $k$, and let $\ell$ be a prime number, different from $\car(k)$. Let $A$ be a finite subgroup of $G(k)$. We want to give an upper bound for $v_\ell (A)$, in terms of invariants of $G$, $k$ and $\ell$. We give two such bounds. The first one (\S 5) is less precise, but very easy to apply; we call it the  S-bound (S for Schur). The other bound (\S 6) is the M-bound (M for Minkowski). Both bounds involve some cyclotomic invariants of $k$, which are defined in \S 4 below.
\vskip 0.5cm
\begin{center}
{\bf {\S 4. The invariants $t$ and $m$}}
\end{center}
%\section*{\S 4. The invariants $t$ and  $m$}
\subsection{Cyclotomic characters}
\label{notation}
Let $\bar{k}$ be an algebraic  closure of $k$, and let $k_s$ be the separable closure of $k$ in $\bar{k}$. For each $n \ge 1$ prime to $\car(k)$, let $\mu_n \subset k^\ast_s$ be the group of $n$-th roots of unity and let $z_n$ be a generator of $\mu_n$.  

The Galois group $\Gamma_k = \Gal(k_s/k)$ acts on $\<z_n\> = \mu_n$. This action defines a continuous homomorphism
$$\chi_{_n}:\Gamma_k \rightarrow \Aut(\mu_n) = (\Z/n\Z)^*,$$ 
which is called the $n$-{\sl th cyclotomic character of} $k$. 

This applies in particular to  $n = \ell^d \, (d = 0,1,\dots )$; by taking inverse limits we get the $\ell^\infty$-cyclotomic character
$$\chi_{_{\ell^\infty}}:\Gamma_k  \rightarrow \Z^*_\ell = \ilim (\Z /\ell^d\Z)^*,$$ 
where $\Z_\ell$ is the ring of $\ell$-adic integers. What matters for us is the image $\Im$ $\chi_{_{\ell^\infty}}$, which is a closed subgroup of $\Z^*_\ell$. To discuss its structure, it is convenient to separate the cases $\ell \not= 2$ and $\ell = 2$.

\subsection{The case $\ell \not= 2$}
We have
$$\Z^*_\ell = C_{\ell -1} \times \left\{ 1 + \ell\!\cdot\!\Z_\ell\right\}$$
where $C_{\ell -1}$ is cyclic of order $\ell - 1$ (i.e. $C_{\ell -1}$ is the group $\mu_{\ell -1}$ of the \linebreak$\ell$-adic field $\Q_\ell$; it is canonically isomorphic to $\F^*_\ell$). As for $1 + \ell\!\cdot \!\Z_\ell$, it is procyclic, generated by $1 + \ell$, and isomorphic to the additive group $\Z_\ell$; its closed subgroups are the groups $1 + \ell^d\!\cdot\! \Z_\ell = \, \<1 + \ell^d\>$, $d = 1,2,\dots , \infty$, with the convention $\ell^\infty = 0$. 

Since $\ell - 1$ and $\ell$ are relatively prime, the subgroup $\Im \chi_{_{\ell^\infty}}$ of $\Z^*_\ell$ decomposes as a direct product:
$$\Im \chi_{_{\ell^\infty}} = C_t \times \left\{ 1 + \ell^m \!\cdot\! \Z_\ell\right\}\, ,$$
where $t$ is a divisor of $\ell-1$, $C_t$ is cyclic of order $t$ and $m = 1,2, \dots$ or $\infty$. 
\vskip 0.1cm
\noindent {\sl Remark.}
An alternative definition of the invariants $t$ and $m$ is:
\begin{eqnarray*}
t & = & \left[ k(z_\ell ) : k\right] = k{\mbox{-degree of}}\,\,  z_\ell\\
m & = & {\mbox{upper bound of the}} \,\, d\ge 1\,\, {\mbox{such that}} \,\, z_{_{\ell^d}} \,\, {\mbox{is contained in}} \,\, k(z_\ell ).
\end{eqnarray*}
\noindent {\sl Examples.} If $k = \Q$ or $\Q_\ell$, $\chi_{_{\ell^\infty}}$ is surjective and we have $t = \ell -1$, $m = 1$. If $k = k_s$, then $\chi_{_{\ell^\infty}}$ is trivial and $t = 1$, $m = \infty$. If $k$ is finite with $q$ elements, $\Im \chi_{_{\ell^\infty}}$ is the closed subgroup of $\Z^*_\ell$ generated by $q$ and we have: 

\begin{eqnarray*}
t & = &{\mbox{order of}} \,\,\, q\,\,\, {\mbox{in}} \,\,\, \F^*_\ell\,\quad\quad\quad\quad\quad\quad\quad\quad\quad\quad\quad\quad\quad\quad\quad\quad\quad\quad\quad\\
m & = & v_\ell (q^t - 1) = v_\ell\,\, (q^{\ell -1} - 1)\, .\quad\quad\quad\quad\quad\quad\hfill
\end{eqnarray*}
\subsection{The case $\ell = 2$} Here $\Z^*_2 = C_2 \times \{ 1 + 4\!\cdot \!\Z_2\}$, where $C_2 = \left\{ 1, -1\right\}$ and the multiplicative group $1 + 4
\!\cdot \!\Z_2$ is isomorphic to the additive group $\Z_2$. There are three possibilities for $\Im \chi_{_{2^\infty}}$:

\begin{enumerate}
\item[(a)] $\Im \chi_{_{2^\infty}} = 1+2^m \! \cdot\!\Z_2 = \<1 + 2^m\>$, with $m = 2, \dots ,\infty$. 
We put $t=1$.
\item[(b)]  $\Im \chi_{_{2^\infty}} = \<-1+2^m \> $, with $m = 2,\dots ,\infty$. 
We put $t=2$.
\item[(c)]  $\Im \chi_{_{2^\infty}} = C_2 \times \{1 + 2^m\! \cdot\!\Z_2\} = \<-1,1+2^m \>$, $m = 2,\dots ,\infty$. 
We put $t=2$.
\end{enumerate}

If $m < \infty$, these types are distinct. If $m = \infty$, types (b) and (c) coincide; in that case $\Im \chi_{_{2^\infty}}$ is equal to $C_2$.

\vskip 0.5cm
\noindent{\sl Remark.} We have $t = [k (i) : k]$ with the usual notation $i = z_4$. Hence case (a) means that $-1$ is a square in $k$,  and  in  that case $m$ is the largest $d \ge 2$ such that $z_{_{2^d}} \in k$. 

If $t = 2$, case (c) is characterized by the fact that $-1$ belongs 
to $\Im \chi_{_{2^\infty}}$. As for $m$, it is given by: 
\begin{eqnarray*}
m & = & -1 + \mbox{ upper bound of the } d\geq2 \mbox{ such that } z_{_{2^d}} \in k(i) \mbox{ in case (b)}\hfill\\
m  & = & \mbox{upper bound of the } d\geq2 \mbox{ such that } z_{_{2^d}} \in k(i) \mbox{ in case (c)}.\hfill
\end{eqnarray*}

\noindent {\sl Examples.} If $k = \Q$ or $\Q_2$, we have type (c) with $t = 2, m = 2$. If $k = \R$, we have types (b) and (c) with $m = \infty$.  If $k$ is separably closed, we have type (a) with $ t = 1$ and $m = \infty$. 

When $\car (k) \not= 0$, type (c) is impossible unless $m = \infty$. If $k$ is finite with $q$ elements, we have type (a) with $m = v_2 (q-1)$ if $q \equiv 1$ (mod 4) and type (b) with $m = v_2 (q+1)$ if $q \equiv -1$ (mod 4).
\vskip 0.5cm
\subsection{The case of finitely generated fields.} Let $k_0$ be the prime subfield of $k$, i.e. $\Q$ if $\car(k) = 0$ or $\F_p$ if $\car(k) = p > 0$. Suppose that $k$ is {\sl finitely generated over} $k_0$. Then {\sl the invariant} $m$ {\sl is finite}, i.e. $\Im \chi_{_{\ell^\infty}}$ is infinite. 

Indeed, if not, there would be a finite extension $k'$ of $k$ containing the group $\mu$ of all the $\ell^d$-th roots of unity $(d = 1,2, \dots ).$ Let $K = k_0 (\mu )$ be the extension of $k_0$ generated by $\mu$. Then:
\begin{enumerate}
\item[(a)] $K$ is algebraic over $k_0$
\item[(b)] $K$ is finitely generated over $k_0$ (because it is contained in $k'$, cf. \newline [A V], \S14, cor. 3 to prop. 17).
\end{enumerate}

Hence $K$ is either a finite field or a number field, which is absurd since such a field only contains finitely many roots of unity.
\vskip 1cm
\begin{center}
%Paragraph 5
{\bf {\S 5. The S-bound}}
\end{center}
\vskip 0.5cm
% Dienstag, 21.2.06
\setcounter{section}{5}
\setcounter{subsection}{0}
We start with the case of tori:
\subsection{The S-bound for a torus: statements}
\begin{mytheorem4}
\label{thm4}
Let $T$ be a torus over $k$, and let $A$ be a finite subgroup of $T(k)$. Then
$$
v_\ell (A) \,\le\, m \left[ \frac{\dim T}{\varphi (t)}\right]\, ,
$$
where $m$ and $t$ are defined as in \S {\rm 4} above and $ \varphi$ is Euler's totient function. 
\end{mytheorem4}

\noindent The bound given by th.4 is optimal. More precisely:
\begin{mytheorem4'}
\label{thm4'}
Assume $m < \infty$. For every $n \ge 1$ there exist a $k$-{\sl torus} $T$ of dimension $n$ and a finite subgroup $A$ of $T(k)$ such that $v_{\ell} (A) = m \cdot [n/\varphi (t)].$
\end{mytheorem4'}
\vskip 0.4cm
\noindent {\sl Example.} Take $k = \Q$ and $\ell = 2$, so that $t = m = 2$. Then th.4 says that any finite 2-subgroup of $T(\Q)$ has order $\le 4^{\dim\, T}$, and th.4$^\prime$ says that this bound can be attained.
\vskip 0.5cm
\subsection{Proof of theorem 4.}
\setcounter{lemma}{3}
\begin{lemma}
\label{lem4}
Let $u \in \mathbf{M}_n(\Z_\ell)$ be an $n \times n$ matrix with coefficients in $\Z_\ell$, 
which we view as an endomorphism of $(\Q_\ell/\Z_\ell)^n$.
Then 
$$
v_\ell \big ( \ker(u) \big )  = v_\ell \big ( \det(u) \big ).
$$
\end{lemma}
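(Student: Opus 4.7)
The plan is to reduce to a diagonal matrix via Smith normal form and then compute both sides directly.

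First, I would observe that both sides of the claimed equality are unchanged under multiplication of $u$ by an element of $\GL_n(\Z_\ell)$ on either side. For the right-hand side this is clear: if $P, Q \in \GL_n(\Z_\ell)$, then $\det(PuQ) = \det(P)\det(u)\det(Q)$, and $\det(P), \det(Q) \in \Z_\ell^*$ have $\ell$-adic valuation $0$. For the left-hand side, $P$ and $Q$ induce automorphisms of $(\Q_\ell/\Z_\ell)^n$ (since they are invertible over $\Z_\ell$, and hence act bijectively on $(\ell^{-d}\Z_\ell/\Z_\ell)^n$ for every $d$), so $\ker(PuQ) = Q^{-1}\ker(u)$ has the same cardinality as $\ker(u)$.

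Next, since $\Z_\ell$ is a principal ideal domain (in fact a discrete valuation ring), I would invoke the Smith normal form: there exist $P, Q \in \GL_n(\Z_\ell)$ such that
$$PuQ = \operatorname{diag}(\ell^{a_1}, \ldots, \ell^{a_n})$$
with $a_i \in \{0,1,2,\ldots\} \cup \{\infty\}$ (the case $a_i = \infty$ corresponding to a zero diagonal entry, which occurs exactly when $\det(u) = 0$, in which case both sides equal $\infty$ and the identity holds trivially). Assume from now on that all $a_i$ are finite; then $v_\ell(\det(u)) = v_\ell(\det(PuQ)) = \sum_{i=1}^n a_i$.

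Finally I would compute the kernel of the diagonal operator on $(\Q_\ell/\Z_\ell)^n$. The kernel decomposes as $\prod_{i=1}^n \ker(\ell^{a_i} : \Q_\ell/\Z_\ell \to \Q_\ell/\Z_\ell)$, and each factor equals $\ell^{-a_i}\Z_\ell/\Z_\ell \cong \Z/\ell^{a_i}\Z$, of order $\ell^{a_i}$. Thus $v_\ell(\ker(u)) = \sum_{i=1}^n a_i = v_\ell(\det(u))$, completing the proof.

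The argument is essentially bookkeeping with Smith normal form, so there is no real obstacle; the only mild point to verify carefully is that elements of $\GL_n(\Z_\ell)$ act bijectively on $(\Q_\ell/\Z_\ell)^n$, which follows because their inverses also have $\Z_\ell$-entries and thus act on $\Q_\ell/\Z_\ell$.
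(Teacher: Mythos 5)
Your proof is correct and follows the same route as the paper: reduce to a diagonal matrix by left and right multiplication by elements of $\GL_n(\Z_\ell)$ (Smith normal form), after checking that both sides are unchanged under such multiplication, then compute directly in the diagonal case. The paper states this in one line; you have simply filled in the details.
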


\begin{proof}
This is clear if $u$ is a diagonal matrix, and one reduces the general case
to the diagonal one by multiplying $u$ on the right and on the left by invertible matrices.
\end{proof}

Now let $n$ be the dimension of the torus $T$. 
Let $Y(T) = \Hom_{k_s}(\G_m,T)$ be the group of cocharacters of $T$.
The action of $\Gamma_k$ on $Y(T)$ gives a homomorphism
$\rho:\Gamma_k \rightarrow \Aut \big ( Y(T) \big ) \cong \GL_n(\Z)$.
If we identify $T$ with $\G_m \times \dots \times \G_m$ (over $k_s$) by choosing a basis of $Y(T)$,
the $\ell^\infty$-division points of $T(k_s)$ form a group isomorphic to $(\Q_\ell/\Z_\ell)^n$
and the action of $g \in \Gamma_k$ on that group is by
$\rho(g) \chi(g)$, where $\chi = \chi_{_{\ell^\infty}}$.

\begin{lemma}
\label{lem5}
{\sl Let} $A$ {\sl be a finite subgroup of} $T(k)$. {\sl For every} $g \in \Gamma_k$ {\sl we have}
$$
v_\ell (A) \,\, \le\,\, v_\ell \big(\det (\rho (g)\, \chi (g) - 1)\big) = v_\ell \big(\det (\rho (g^{-1}) - \chi (g)\big).$$
\end{lemma}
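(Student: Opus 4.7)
The plan is to reduce the bound on $|A|$ at the prime $\ell$ to a statement about fixed points of $g$ on the $\ell^\infty$-torsion of $T$, and then invoke Lemma~4.

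First I would replace $A$ by its $\ell$-Sylow subgroup $A_\ell$; this does not change $v_\ell(A)$. Since $A_\ell$ is a finite $\ell$-group contained in $T(k)$, and since $\ell \neq \car(k)$, its elements are $\ell^\infty$-division points of $T$ fixed by $\Gamma_k$. Using the identification of the $\ell^\infty$-torsion subgroup $T(k_s)[\ell^\infty]$ with $(\Q_\ell/\Z_\ell)^n$ given just before the statement (via a choice of basis of $Y(T)$), under which $g \in \Gamma_k$ acts by the matrix $\rho(g)\chi(g) \in \mathbf{M}_n(\Z_\ell)$, we get
\[
A_\ell \,\subset\, \bigl((\Q_\ell/\Z_\ell)^n\bigr)^{\Gamma_k} \,\subset\, \ker\bigl(\rho(g)\chi(g) - 1\bigr),
\]
the last inclusion being valid for \emph{every} $g \in \Gamma_k$.

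Next I would apply Lemma~4 to the endomorphism $u = \rho(g)\chi(g) - 1$ of $(\Q_\ell/\Z_\ell)^n$. This gives $v_\ell(\ker u) = v_\ell(\det u)$, and combining with the preceding inclusion,
\[
v_\ell(A) = v_\ell(A_\ell) \le v_\ell(\ker u) = v_\ell\bigl(\det(\rho(g)\chi(g) - 1)\bigr).
\]

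Finally, the equality of the two determinant expressions is a routine manipulation: multiplying on the left by $\rho(g)^{-1} = \rho(g^{-1})$ gives
\[
\rho(g^{-1})\bigl(\rho(g)\chi(g) - 1\bigr) = \chi(g)\cdot I - \rho(g^{-1}),
\]
so $\det(\rho(g)\chi(g) - 1) = \det(\rho(g))\cdot \det(\chi(g) - \rho(g^{-1}))$, and since $\rho(g) \in \GL_n(\Z)$ has determinant $\pm 1$, both determinants have the same $\ell$-adic valuation, which also equals $v_\ell(\det(\rho(g^{-1}) - \chi(g)))$ (a sign change). The only substantive step is the first one; everything else is bookkeeping. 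I do not anticipate a real obstacle here, since the needed ingredient—the Galois action on the $\ell^\infty$-torsion—has already been set up explicitly in the paragraph preceding the lemma.
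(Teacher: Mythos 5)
Your proof is correct and follows essentially the same route as the paper: pass to the $\ell$-Sylow subgroup, note that its elements are $\Gamma_k$-fixed $\ell^\infty$-torsion points of $T$ and hence lie in $\ker(\rho(g)\chi(g)-1)$, then apply Lemma~4 with $u = \rho(g)\chi(g)-1$. You have also spelled out the routine check that the two determinant expressions have the same $\ell$-adic valuation (via $\det\rho(g)=\pm1$), which the paper leaves implicit.
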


\begin{proof}
 By replacing $A$ by its $\ell$-Sylow subgroup, we may assume that $A$ is an $\ell$-group, hence is contained in the $\ell$-division points of $T(k_s)$. Since the points of $A$ are rational over $k$, they are fixed by $g$, i.e. they belong to the kernel of $g\!-\!1$. The inequality then follows from lemma 4, applied to $u = \rho (g) \,\chi (g) - 1.$
\end{proof}

We now choose $g \in \Gamma_k$ such that the inequality of lemma 5
gives that of th.4.
Here is the choice:
$$
\quad  \chi(g) = z_tu,  \quad \mbox{ where } z_t\in \Z_{\ell}^* \mbox{ has order } t, \mbox{ and } v_\ell (1\!-\!u) = m.
$$
\big(This works for $\ell=2$ as well as for $\ell \neq 2$, thanks to the definition of $t$ in \S4.1 and \S4.2.
Note that in all cases but $\ell=2$, type (c), $\chi(g)$ is a topological generator of $\Im\chi$.\big)

We have $\rho(g) \in \GL_n(\Z)$, and $\rho(g)$ is of finite order
(because the image of $\rho:\Gamma_k \rightarrow \GL_n(\Z)$ is finite). 
Hence the characteristic polynomial $F$ of $\rho(g^{-1})$ is  a product of cyclotomic polynomials:

\numberwithin{equation}{section}
\numberwithin{equation}{subsection}
\setcounter{equation}{0}
\setcounter{subsection}{2}
\setcounter{subsubsection}{0}

\begin{equation}
\label{eq521}
F = \prod \Phi_{d_j}, \quad \mbox{ with } \sum \varphi(d_j) = n.
\end{equation}
The inequality of lemma~\ref{lem5} gives
%\numberwithin{equation}{subsection}
$$
\label{eq10}
v_\ell (A) \leq \sum v_\ell \big(\Phi_{d_j} (z_tu) \big).
$$
We thus need to compute $v_\ell \big(\Phi_d (z_tu)\big)$ for every $d \ge 1$. The result is:

\begin{lemma}
\label{lem6}
We have

$$
v_\ell \big( \Phi_d (z_tu ) \big) = \left\{
\begin{array}{ll}
m &{\mbox{if }} \,d=t \\
1 & {\mbox{if }} \,d = t\cdot \ell^\alpha, \ \alpha \geq 1 \ \mbox{ or } 
\  \alpha = -1  \ (\mbox{if } t=2=\ell)\\
0 & {\mbox{otherwise.}}
\end{array}
\right.
$$
\end{lemma}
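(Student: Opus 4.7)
The plan is to expand $\Phi_d(z_tu) = \prod_{\zeta}(z_tu-\zeta)$ over primitive $d$-th roots of unity $\zeta$, parametrize these roots conveniently, and compute the $\ell$-adic valuation of each factor, passing to a cyclotomic extension of $\Q_\ell$ when necessary. Writing $d = d'\ell^\alpha$ with $\gcd(d',\ell)=1$, every primitive $d$-th root factors uniquely as $\zeta = \eta\xi$ with $\eta$ a primitive $d'$-th root and $\xi$ a primitive $\ell^\alpha$-th root. The three cases of the lemma correspond respectively to $(d',\alpha)=(t,0)$, to $(d',\alpha)=(t,\alpha)$ with $\alpha\ge1$ (plus the degenerate $(d',\alpha)=(1,0)$ when $t=\ell=2$), and to the remainder.

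First, for $d=t$, the $\eta$'s all lie in $\Z_\ell^*$: when $\ell>2$ because $t\mid\ell-1$, and when $\ell=2$ because $t\in\{1,2\}$. The factor at $\eta=z_t$ is $z_t(u-1)$, of $\ell$-adic valuation $m$. For $\eta\ne z_t$, which can only happen when $\ell>2$, the difference $z_t-\eta$ is a unit mod $\ell$ (distinct $t$-th roots of unity map to distinct elements of $\F_\ell^*$ when $t\mid\ell-1$), and it dominates the perturbation $z_t(u-1)$, so the factor is a unit. Summing, $v_\ell(\Phi_t(z_tu)) = m$.

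Next, for $d=t\ell^\alpha$ with $\alpha\ge1$, I would work in $L = \Q_\ell(\zeta_{\ell^\alpha})$, a totally ramified extension of $\Q_\ell$ of degree $e = \varphi(\ell^\alpha)$, in which every primitive $\ell^\alpha$-th root $\xi$ has $v_L(1-\xi) = 1$. Split $z_tu-\eta\xi = (z_tu-\eta)+\eta(1-\xi)$. When $\eta = z_t$ one has $v_L(z_tu-\eta) = em \ge 2 > 1 = v_L(\eta(1-\xi))$, so the whole factor has $v_L = 1$; when $\eta\ne z_t$, the first summand is a unit by the argument of Case 1 and dominates, giving $v_L = 0$. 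The $\varphi(\ell^\alpha)$ nonzero contributions yield $v_L(\Phi_d(z_tu)) = \varphi(\ell^\alpha)$, hence $v_\ell = 1$. The anomalous case $d=1$, $t=\ell=2$ is not covered by this, and would be handled by hand: $\Phi_1(-u) = -(u+1) = -(2+(u-1))$ has $v_2 = 1$ because $m \ge 2 > 1$.

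For the remaining case I would reduce modulo $\ell$. The congruence $u\equiv 1\pmod{\ell^m}$ gives $\Phi_d(z_tu)\equiv\Phi_d(z_t)\pmod\ell$; the classical identity $\Phi_{d'\ell^\alpha}(X)\equiv\Phi_{d'}(X)^{\varphi(\ell^\alpha)}\pmod\ell$ (with $\varphi(1)=1$) reduces matters to showing $\Phi_{d'}(\bar z_t)\ne 0$ in $\F_\ell$. For $\ell>2$, $\bar z_t\in\F_\ell^*$ has exact order $t$, so it is a primitive $d'$-th root only if $d'=t$, which the otherwise hypothesis excludes. For $\ell=2$, $\bar z_t = 1\in\F_2$, and $\Phi_{d'}(1)$ is odd whenever $d'>1$ (equal to $p$ if $d'=p^k$ with $p$ odd, and to $1$ if $d'$ has several odd prime factors), again precluded by the hypothesis. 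The main obstacle is really the combinatorics around $\ell=2$, where the residue field is too coarse for the reduction argument to separate $d=1$ from $d=t=2$, and so the $\alpha=-1$ clause must be handled outside the uniform framework.
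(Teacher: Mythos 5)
For $\ell\neq 2$ your argument is essentially the paper's: factor $\Phi_d(z_tu)$ into linear terms over an $\ell$-adic field, split each primitive $d$-th root into its prime-to-$\ell$ part and its $\ell$-power part, and estimate valuations termwise using $v_\ell(1-\xi)=1/\varphi(\ell^\alpha)$ for $\xi$ of order $\ell^\alpha$. The paper itself proves only this case, remarking that ``the case $\ell=2$ is analogous but slightly different''; you attempt to carry the argument through $\ell=2$ as well, and that is where a gap appears, in the subcase $t=\ell=2$ (types (b), (c) of \S 4.2, which is the case relevant to $k=\Q$).

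When $t=\ell=2$, your parametrization $\zeta=\eta\xi$ with $\eta$ of order $d'$ prime to $\ell$ and $\xi$ of $\ell$-power order cannot place $z_t=-1$ in the $\eta$-factor: a primitive $2^\beta$-th root of unity has trivial odd part, so $d'=1$ and $\eta=1\neq z_t$. Consequently, for $d=t\ell^\alpha=2^{1+\alpha}$ with $\alpha\geq 1$ (i.e.\ $d=4,8,16,\dots$) your Case 2 splitting $z_tu-\eta\xi=(z_tu-\eta)+\eta(1-\xi)$ is forced to use $\eta=1$, and the first term $-u-1$ is \emph{not} a unit (it has $v_2=1$); the ``argument of Case 1'' you invoke for $\eta\neq z_t$ needs $\eta$ and $z_t$ to be distinct elements of $\mu_t\subset\Z_\ell^*$, which requires $t$ prime to $\ell$. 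Your ``remainder'' argument does not pick up the slack either: with $d'=1$ one has $\Phi_{d'}(\bar z_t)=\Phi_1(1)=0$ in $\F_2$, so the reduction mod $2$ only yields $v_2\geq 1$, not the exact value. You flagged the anomaly at $d=1$ and treated it by hand, but the same breakdown affects all $d=2^\beta$ with $\beta\geq 2$; your three-way bookkeeping mis-files them in the remainder, whereas the lemma asserts they belong to the ``$1$'' case. (They can also be done by hand: $\Phi_{2^\beta}(-u)=u^{2^{\beta-1}}+1\equiv 2\pmod{2^m}$ since $m\geq 2$, hence $v_2=1$.) Your Case 1 (for $d=t$) and your arguments for $\ell>2$ and for $\ell=2$, $t=1$ are all correct.
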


\begin{proof}
(We restrict ourselves to the case $\ell \neq 2$.
The case $\ell=2$ is analogous but slightly different.)

We have $\Phi_d (z_tu) = \prod (z_tu-z)$ where $z$ runs through the primitive $d$-th roots of unity in $\overline{\Q}_\ell$. Write $d$ as $d = \delta\!\cdot\!\ell^\alpha$ with $(\delta , \ell) = 1$ and $ \alpha \ge 0$. The images of the $z$'s in the  residue field $\overline{\F}_\ell$ of $\overline{\Q}_\ell$ are primitive $\delta$-th roots of unity. If $\delta \not= t$, none of them is equal to the image of $z_tu$, which has order $t$. In that case, all the $z_tu -z$ are units in $\overline{\Q}_\ell$ hence have valuation 0 and we have $v_\ell \big(\Phi_d (z_tu)\big) = 0.$ If $\delta = t$, i.e. $d = t\!\cdot\!\ell^\alpha$ with $\alpha \ge 0$, there are two cases: 

(a) $\alpha = 0$, i.e. $d = t$. In that case, one of the $z$'s is equal to $z_t$ and we have $v_\ell (z_tu -z) = v_\ell (u-1) =m$; the other $z$'s contribute $0$. 

(b) $\alpha \geq 1$. Here $z$ can be written as $z'\!\cdot\!z''$ 
where $z'$ runs through the \mbox{$t$-th} primitive roots of $1$, 
and $z''$ through the $\ell^\alpha$-th primitive roots of $1$.
The valuation of $z-z_t u$ is $0$ unless $z' = z_t$, 
in which case $v_\ell(z-z_t u) = v_\ell(z'' - u)$.
It is well-known that $v_\ell (z''-1) = \frac{1}{(\ell-1)\ell^{\alpha-1}}$.
Since $v_\ell(u-1) = m$, which is strictly larger,  we have

$$v_\ell(z''-u) = v_\ell\big ( (z''-1) - (u-1) \big ) 
=  \frac{1}{(\ell-1)\ell^{\alpha-1}}= \frac{1}{\varphi(\ell^\alpha)}.$$

Since the number of the $z''$ is $\varphi(\ell^\alpha)$, 
we thus get $v_\ell\big ( \Phi_d(z_{t}u) \big ) =1$, as claimed.
\end{proof}

We can now prove theorem 4:
With the notation of~(\ref{eq521}), denote by $r_1$ the number of $j$'s with  $d_j=t$,
and by $r_2$ the number of $j$'s with $d_j = t\!\cdot\!\ell^{\alpha_j}$, $\alpha_j \geq 1$, 
or $\alpha_j=-1$ in case $\ell=2,t=2$.
Using lemmas 5 and 6 we get 
$$
v_\ell(A) \leq r_1m +r_2
$$
\mbox{ and of course }
$$
r_1 \varphi(t) + \sum \varphi(t\!\cdot\!\ell^{\alpha_j}) \leq n = \dim T.
$$
Since $ \varphi(t\!\cdot\!\ell^{\alpha_j}) \geq \varphi(t)(\ell-1)$ this shows that
$r_1 \varphi(t) + r_2 \varphi(t) (\ell-1)  \leq n$.

\noindent Hence $r_1 + r_2 (\ell -1) \le [n/\varphi (t)]$, and we have:
$$
v_{\ell} (A) \le r_1m + r_2\, \le \,r_1m + r_2 (\ell -1)m\, \le\,m [n/\varphi (t)]\, ,$$
which concludes the proof.
\hfill$\Box$
\vskip 0.2cm
\noindent {\sl Remark.}
Since $(\ell -1)m >0$ in all cases (even if $\ell = 2)$, the above proof shows that $v_\ell (A)$ can be equal to $m[n/\varphi (t)]$ only when $r_2 = 0$. In other words:

\vskip 0.2cm
\noindent{\bf Complement to theorem 4.} {\sl Assume} $v_\ell(A) = m[n/\varphi (k)]$, {\sl where}\linebreak
$n = \dim T$. {\sl If} $g \in \Gamma_k$ {\sl is such that} $\chi (g) = z_{t}u$, {\sl with} $v_\ell (u-1) = m$ {\sl as above, the characteristic polynomial of} $\rho (g)$ {\sl is divisible by} $(\Phi_t)^N$, {\sl with} $N = [n/\varphi (k)]$.

\noindent(In other words, the primitive  $t$-th roots of unity are eigenvalues of $\rho (g)$ with multiplicity $N$.)
\vskip 0.2cm
When $t = 1$ or 2 (i.e. when $\varphi (t) = 1$), this can be used to determine the structure of an ``optimal'' $T$:

\begin{corollary} Assume $t = 1$ or $2$, and $v_\ell (A) = mn.$ Then{\rm { :}}

{\rm (i)}  \,\,If $t  = 1$,  the torus $T$ is split {\rm (i.e. isomorphic to the
product of $n$ copies of }$\G_m${\rm )}.

{\rm (ii)} If $t = 2$, $T$ is isomorphic to the product of $n$ non-split  tori of dimension $1$ which are split by the quadratic extension $k(z_\ell )/k$ if $\ell \not= 2$ and by $k(i)/k$ if $\ell = 2$.

\end{corollary}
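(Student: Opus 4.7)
My plan is to bootstrap the Complement to Theorem~4 into a full description of the Galois action $\rho:\Gamma_k \to \Aut(Y(T)) = \GL_n(\Z)$ on the cocharacter lattice of $T$. Let $\epsilon:\Gamma_k \to C_t = \{\pm 1\}$ be the composition of $\chi_{_{\ell^\infty}}$ with the projection $\Im\chi \twoheadrightarrow \Im\chi/(1+\ell^m\Z_\ell)$, and set $u(h) := \chi(h)/\epsilon(h) \in 1 + \ell^m\Z_\ell$ for each $h \in \Gamma_k$. I will show that $\rho(h) = \epsilon(h)\cdot I$ for every $h$; once this is known, the structure of $T$ follows immediately.

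The first step is: if $v_\ell(u(h) - 1) = m$, then $\rho(h) = \epsilon(h)\cdot I$. By lemma~5 applied to $h$, one has $mn \le \sum_j v_\ell(\Phi_{d_j}(\chi(h)))$, where $\prod_j \Phi_{d_j}$ is the characteristic polynomial of $\rho(h^{-1})$. The computation of lemma~6 (applied with the order $s(h)$ of $\epsilon(h)$ in place of $t$, which is a trivial adaptation of the argument in \S5.2) shows that each term is $m$ if $d_j = s(h)$, at most $1$ if $d_j = s(h)\cdot\ell^{\alpha_j}$ with $\alpha_j \ge 1$ (or in the special $\ell=2$ situation already covered in lemma~6), and $0$ otherwise. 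Combining with $\sum_j \varphi(d_j) = n$, with $\varphi(d) \ge \ell-1$ for the relevant $d \ne s(h)$, and with $m \ge 2$ in the single $\ell = 2$ case where $\ell - 1 = 1$, the inequality is saturated only when every $d_j = s(h)$. Hence $\rho(h^{-1})$ has characteristic polynomial $\Phi_{s(h)}^n$; being of finite order, it is semisimple with minimal polynomial $\Phi_{s(h)}$, so $\rho(h^{-1}) = \epsilon(h)^{-1}\cdot I$, i.e.\ $\rho(h) = \epsilon(h)\cdot I$.

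The second step extends this to arbitrary $h$ via a product trick. Pick once and for all some $g_0 \in \Gamma_k$ with $v_\ell(u(g_0)-1)=m$; such $g_0$ exists by the very definitions of $t$ and $m$ in \S4.2--\S4.3 (inspection of the three cases shows $\Im\chi$ meets the appropriate coset of $1+\ell^{m+1}\Z_\ell$). Since $u$ is multiplicative, if $v_\ell(u(h)-1) > m$ then the ultrametric identity
\[
u(g_0 h) - 1 \,=\, u(g_0)\bigl(u(h) - 1\bigr) + \bigl(u(g_0) - 1\bigr)
\]
shows that $v_\ell(u(g_0 h) - 1) = m$; so $g_0 h$ satisfies the hypothesis of the first step, giving $\rho(g_0 h) = \epsilon(g_0 h)\cdot I$. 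Combined with $\rho(g_0) = \epsilon(g_0)\cdot I$ from the first step, this yields $\rho(h) = \epsilon(h)\cdot I$.

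With $\rho(h) = \epsilon(h)\cdot I$ established on all of $\Gamma_k$, the cocharacter module $Y(T) \cong \Z^n$ is the direct sum of $n$ copies of the rank-$1$ $\Gamma_k$-module $\Z_\epsilon$. For $t=1$, $\epsilon$ is trivial and $T \cong (\G_m)^n$ is split, proving (i). For $t=2$, $\epsilon$ is the nontrivial quadratic character; by the definition of $t$ in \S\S4.2--4.3 it factors through $\Gal(k(z_\ell)/k)$ if $\ell \ne 2$ and $\Gal(k(i)/k)$ if $\ell = 2$. Hence $T$ is the $n$-fold product of the rank-$1$ non-split torus with cocharacter module $\Z_\epsilon$, split by that quadratic extension, proving (ii). The main work is concentrated in the first step; the main obstacle there is running the saturation argument of lemma~6 uniformly for $s(h) \in \{1, t\}$ and in the three cyclotomic settings of \S4.2--\S4.3, but the inequality $\varphi(\ell^\alpha) \ge \ell - 1$ (strengthened by $m \ge 2$ when $\ell = 2$) makes it mechanical.
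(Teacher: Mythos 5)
Your proposal is correct, and it follows the same basic strategy as the paper (leverage the Complement to Theorem~4 to nail down $\rho$ on ``generic'' elements, then propagate to all of $\Gamma_k$ by a multiplicative trick), but the case organization is genuinely different and in one respect cleaner. The paper splits into three cases depending on the pair $\bigl(e_\gamma, v_\ell(u_\gamma - 1)\bigr)$: case (a) ($e_\gamma = -1$, $v_\ell(u_\gamma - 1) = m$) is handled by the Complement as stated; case (b) ($e_\gamma = -1$, $v_\ell(u_\gamma - 1) > m$) is reduced to (a) via $g^2\gamma$, using that $\ell$ is odd so that $v_\ell(u_g^2 - 1) = m$; case (c) ($e_\gamma = 1$) is reduced to (a) or (b) via $g\gamma$. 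You instead split only on $v_\ell(u(h)-1)$: when it equals $m$, you run a mildly extended version of Lemma~6 and the Complement with $s(h) \in \{1,t\}$ in place of $t$ (the paper's Complement is stated only for the case $s(h)=t$, so your ``first step'' needs the easy observation that Lemma~6's calculation works equally well with $z_t u$ replaced by $z_s u$ for any $s$ dividing $t$ when $v_\ell(u-1)=m$); when it exceeds $m$, you multiply by a fixed $g_0$ of valuation exactly $m$ and use the ultrametric identity, which crucially does not require $\ell$ odd. The paper does not give the $\ell = 2$ argument (it says it ``requires similar, but more detailed, arguments''), and indeed its $g^2\gamma$ device in case (b) breaks for $\ell = 2$ because $v_2(u_g^2 - 1) = m+1 \neq m$; your $g_0 h$ device sidesteps this, so your write-up is a uniform treatment of all three types of $\S4.2$ at the cost of having to re-run the saturation argument for $s(h)=1$ (where, as you note, the bound $m \geq 2$ takes over the role of $\ell - 1 \geq 2$). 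Both proofs are correct; yours buys uniformity in $\ell$ in exchange for a small extension of the Complement that the paper does not need.
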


\begin{proof}
We give the proof for $t = 2$ and $\ell > 2$: the case $t =1$ is easier and the case $t = 2 = \ell$ requires similar, but more detailed, arguments. 

Let $\gamma \in \Gamma_k$. We may write $\chi (\gamma )$ as $e_\gamma\!\cdot\!u_\gamma$, with $e_\gamma \in \{1,-1\}$ and 
\mbox{$u_\gamma \in \{1 + \ell^m\Z_\ell\}$}. There are three cases:
\begin{enumerate}
\item[(a)] $e_\gamma = -1$ and $v_\ell (u_\gamma -1) = m$
\item[(b)] $e_\gamma = -1$ and $v_\ell (u_\gamma - 1) > m$
\item[(c)] $e_\gamma = 1.$
\end{enumerate}
In case (a), the ``complement" above shows that $\rho (\gamma )$ has $-1$ for eigenvalue with multiplicity $n$, hence $\rho (\gamma ) = -1$ in $\Aut (T) \simeq \G\L_n (\Z)$. 

In case (b), choose $g \in \Gamma_k$ of type (a); this is possible by the very definition of $t$ and $m$. The element $g^2\gamma$ is of type (a) (this uses the fact that $\ell$ is odd); hence we have $\rho (g^2\gamma) = -1$ and since $\rho (g) = -1$ this shows that $\rho (\gamma ) = -1$.

If $\gamma$ is of type (c), then $g\gamma$ is of type (a) or (b) and we have $\rho (g\gamma) = -1$ hence $\rho(\gamma) = 1.$

In all cases, we have $\rho (\gamma ) \in \{1,-1\}$, and more precisely $\rho (\gamma ) = e_\gamma$.
The corollary follows.\end{proof}
It would be interesting to have a similar classification for $t > 2$.

\subsection{Proof of theorem 4$^\prime$: construction of tori with large $A$'s} 
To prove th.4$^\prime$ it is enough to construct a $k$-torus $T$, of dimension $n = \varphi (t)$, such that $T(k)$ contains a cyclic subgroup of order $\ell^m$. Here is the construction:

Let $K$ be the field $k(z_\ell)$ if $\ell \neq 2$ and the field $k(i)$ if $\ell =2$.
It is a cyclic extension of $k$ of degree $t$ with Galois group $C_t$.
Let $T_1 = R_{K/k} \G_m$ be the torus: ``multiplicative group of $K$";
we have $T_1(k) = K^*$, and $T_1(k)$ contains the group $\<z_{_{\ell^m}}\>$, cf. 
\S 4.
If $\sigma$ is a generator of $C_t$, $\sigma$ acts on $T_1$, 
and we have $\sigma^t -1 = 0$ in the ring $\End(T_1)$.
Let us write the polynomial $X^t-1$ as $\Phi_t(X)\!\cdot\!\Psi(X)$, 
where $\Phi_t$ is the $t$-th cyclotomic polynomial.
We have $\Phi_t(\sigma) \Psi(\sigma)= 0$ in $\End(T_1)$.
Let $T = \Im \Psi(\sigma)$ be the image of 
$$\Psi(\sigma):T_1 \rightarrow T_1.$$ 

\noindent One checks that

(a) $\dim T = \varphi(t)$

(b) $T(k)$ contains $z_{_{\ell^m}}$.

\noindent
(For $\ell \neq 2$, (b) follows from the fact that the restriction of $\Psi(\sigma)$ 
to  $\<z_{_{\ell^m}}\>$ is an automorphism.
For $\ell=2$, use the fact that $T$ is the kernel of $\Phi_t(\sigma)$.)

Hence $T$ has the required properties.
\hfill$\Box$
\vskip 0.5cm
{\sl Alternate description of} $T$. It is enough to describe its character group $T^* = \Hom_{_{k_s}}(T,\G_m)$, together with the action of $\Gamma_k$ on $T^*$:\smallbreak

- $T^* = \Z [X]/\Phi_t(X)$ = algebraic integers of the cyclotomic field $\Q (\mu_t)$\vskip.1cm

- $\Gamma_k$ acts on $T^*$ by $\Gamma_k \rightarrow \Im \chi_{_{\ell^\infty}} \rightarrow C_t \,\,\tilde{\rightarrow}\,\, \Aut \big(\Q (\mu_t)\big).$\vskip.1cm

\noindent(It does not matter which isomorphism of $C_t$ onto $\Aut \big(\Q (\mu_t)\big)$ one chooses; they all give isomorphic tori.)

\subsection{The S-bound for reductive groups} 
Recall, cf. \S 3.3, that ``reductive" $\Rightarrow$ ``connected".

\begin{mytheorem5} 
{\sl Let} $G$ {\sl be a reductive group over} $k$, {\sl of rank} $r$, {\sl with Weyl group} $W$. {\sl If} $A$ {\sl is a finite subgroup of} $G(k)$, {\sl one has}
$$
v_\ell (A) \le m \left[ \frac{r}{\varphi (t)}\right] + v_\ell (W).
$$
\end{mytheorem5}
\begin{proof}
As usual, we may assume that $A$ is an $\ell$-group. In that case it is nilpotent, and by the corollary to th.3$^{\prime\prime}$ of \S 3.3 there exists a maximal $k$-torus $T$ of $G$ whose normalizer $N = N_G(T)$ contains $A$. Put $W_T = N/T$; this is a finite $k$-group such that $W_T(k_s)\simeq W$. If $A_T$ denotes the intersection of $A$ with $T(k)$, we have an exact sequence
$$
1 \rightarrow A_T \rightarrow A\rightarrow W_T(k)\, .
$$
Hence $v_\ell (A) \le v_\ell (A_T) + v_\ell \big(W_T(k)\big).$ By th.4, we have $v_\ell (A_T) \le m\cdot [r/\varphi (t)]$; on the other hand $W_T(k)$ is isomorphic to a subgroup of $W$, hence $v_\ell \big(W_T(k)\big) \hfill\break \le v_\ell (W)$. The theorem follows. \end{proof}

\begin{corollary}

If $r<\varphi(t)$, then $G(k)$ is $\ell$-torsion free {\rm (i.e. does not contain any elements of order} $\ell${\rm )}.

\end{corollary}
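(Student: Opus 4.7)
The plan is to read off the corollary directly from Theorem~5: the hypothesis $r < \varphi(t)$ is designed precisely to kill the first term $m\left[r/\varphi(t)\right]$ of the S-bound, and a short side argument will handle the Weyl-group term $v_\ell(W)$.

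First I would dispose of the first term: $r < \varphi(t)$ gives $\left[r/\varphi(t)\right] = 0$ immediately. So if I could show also that $v_\ell(W) = 0$, then Theorem~5 applied to $A = \langle a\rangle$ for any element $a \in G(k)$ of order $\ell$ would yield $1 = v_\ell(A) \le 0$, a contradiction.

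To see $v_\ell(W) = 0$, I would argue by contradiction using cyclotomic-degree bookkeeping. The case $\ell = 2$ is empty (then $t \in \{1,2\}$ forces $\varphi(t) = 1$, so $r < \varphi(t)$ means $r = 0$ and $G$ is trivial), so assume $\ell > 2$. Then \S 4.2 tells us $t \mid \ell-1$, so $\varphi(t) \le t \le \ell-1$, and hence $r < \varphi(t)$ forces $r \le \ell - 2$. Now suppose for contradiction that $\ell \mid |W|$. By Cauchy's theorem, $W$ contains an element $w$ of order exactly $\ell$. View $W$ via its faithful action on the $r$-dimensional $\mathbf{Q}$-vector space $V = Y(T) \otimes \mathbf{Q}$; the minimal polynomial of $w$ divides $X^\ell - 1 = (X-1)\Phi_\ell(X)$ and is distinct from $X - 1$ (as $w \neq 1$). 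So $\Phi_\ell$ divides the characteristic polynomial of $w$ on $V$, whence $r \ge \deg \Phi_\ell = \ell - 1$. This contradicts $r \le \ell - 2$, so $\ell \nmid |W|$.

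Combining, Theorem~5 gives $v_\ell(A) \le 0 + 0 = 0$ for every finite subgroup $A$ of $G(k)$; in particular no element of $G(k)$ can have order $\ell$. The only substantive step is the observation that an element of prime order $\ell$ in a finite subgroup of $\GL_r(\mathbf{Q})$ forces $r \ge \ell - 1$ via $\Phi_\ell$, but this is standard and presents no real obstacle.
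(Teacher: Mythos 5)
Your proof is correct, and it follows the same skeleton as the paper's: feed the hypothesis $r < \varphi(t)$ into Theorem~5 to annihilate the torus term $m\left[r/\varphi(t)\right]$, then show separately that $v_\ell(W) = 0$, and conclude that any finite subgroup has trivial $\ell$-part. Where you diverge is in how $v_\ell(W) = 0$ is obtained. The paper invokes Minkowski's Theorem~1 of \S 1.1: since $W$ embeds in $\GL_r(\Z) \subset \GL_r(\Q)$ and $r < \varphi(t) \le t \le \ell - 1$, one has $M(r,\ell) = \left[r/(\ell-1)\right] + \cdots = 0$, so $v_\ell(W) \le M(r,\ell) = 0$. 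You instead prove the needed special case of Minkowski's bound from scratch: if $\ell \mid |W|$, Cauchy gives $w \in W$ of order $\ell$, and since $w$ acts faithfully on an $r$-dimensional rational space, $\Phi_\ell$ must divide its characteristic polynomial, forcing $r \ge \ell - 1$, contradiction. The two arguments have exactly the same content (the case $M(r,\ell)=0$ of Minkowski's theorem \emph{is} the statement that $\GL_r(\Q)$ is $\ell$-torsion-free for $r < \ell - 1$), so nothing is gained or lost mathematically; yours is simply self-contained and avoids quoting the heavier result. One small point in your favor: you treat $\ell = 2$ explicitly (observing $r < \varphi(t) = 1$ forces $r = 0$, so $G$ and hence $W$ are trivial), whereas the paper's chain $t \le \ell - 1$ is technically only valid for $\ell > 2$; the corollary is vacuous for $\ell = 2$ in any case, but your handling is cleaner.
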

\begin{proof}
We have $\left[{\frac{r}{\varphi(t)}}\right]=0$. Hence by th.5 it is enough
to show that \linebreak
$v_{\ell}(W)=0$, but this follows from th.1 of \S 1.1 since $W$
is isomorphic to a subgroup of ${\bf GL}_r({\bf Z})$ and
$r<\varphi(t)\leq t\leq\ell-1$.
\end{proof}

\noindent {\sl Remark.} The ``S-bound" given by th.5 looks {\sl a priori} rather coarse:

(a) The torus $T$ is not an arbitrary torus of dimension $r$; the fact that it is a subtorus of $G$ puts non-trivial conditions on it; for instance the action of $\Gamma_k$ on $T^* = \Hom_{k_s} (T, \G_m)$ stabilizes the set of roots.

(b) The group $W_T(k)$ is in general smaller than $W$ itself, and the image of $N(k) \rightarrow W_T(k)$ may be even smaller.

It is therefore surprising how often the S-bound is close to being optimal. As an example, take $k = \Q$ and $G$ of type $E_8$. We have $m = 1$ and $t = \ell -1$ (except when $\ell = 2$ in which case $m = t = 2$), $r = 8$, $|W| = 2^{14}3^{5}5^{2}7$. The S-bound tells us that, if $A$ is a finite subgroup of $G(\Q )$, its order divides the number
$$
M_{S} = 2^{30}\!\cdot\!3^{13}\!\cdot\!5^6\!\cdot\!7^5\!\cdot\!13^2\cdot\!17
\!\cdot\!19\!\cdot 31\, .$$
We shall see later (cf. \S6.3.2 and \S7) that the best bound is $M = M_{S}/5\!
\cdot\!7\!\cdot\!17\,.$ In particular, the $\ell$-factors of $M_{S}$ are optimal for all $\ell$'s except $\ell = 5, 7$ and $17$. 
\vskip 1cm
%Paragraph 6
\begin{center}
{\bf {\S 6. The M-bound}}
\end{center}
\vskip 0.5cm
\setcounter{section}{6}
\setcounter{subsection}{0}
\subsection{Notation} From now on, $G$ is a semisimple \footnote{We could also
accept inner forms of reductive groups, for instance
${\bf GL}_n$ or more generally ${\bf GL}_D$, where $D$ is a central
simple $k$-algebra with $[D:k]=n^2$. In that case,
one has $r=n$, the $d_i$'s are the integers $1,2,\dots,n$ and th.6 is valid, with the 
same proof.}  group over $k$. We denote by $R$ its root system (over $k_s$), by $W$ its Weyl group, and by $r$ its rank. The group $W$ has a natural linear representation of degree $r$. The invariants of $W$ acting on $\Q[x_1,\dots ,x_r]$ make up a graded polynomial algebra of the form $\Q [P_1, \dots ,P_r]$, where the $P_i$ are homogeneous of degrees $d_i$, with \linebreak$d_1 \le d_2 \le \dots \le d_r$, (Shephard-Todd theorem, cf. e.g. [LIE V], \S5, th.4 or [Se 00], p.95). The $d_i$'s are called the {\sl invariant degrees} of $W$ (or of $G$). One has
$$
\prod d_i = |W| \quad {\mbox{and}} \quad \sum (2d_i-1) = \dim G\, .
$$
When $G$ is quasi-simple (i.e. when $R$ is irreducible) $d_r$ is equal to the Coxeter number $h = (\dim G)/r-1$, and one has the symmetry formula
$$
d_i + d_{r+1-i} = h +2\, .
$$
Moreover, if $j < h$ is prime to $h$, then $j+1$ is one of the $d_i$'s. These properties make $d_1,\dots , d_r$ very easy to compute (see e.g. the tables of [LIE VI]). 

For instance, for $G$ of type $E_8$, the $d_i$'s are: $2$, $8$, $12$, $14$, $18$, $20$, $24$, $30$. 

Let Dyn$(R)$ be the Dynkin diagram of $R$. There is a natural action of the Galois group $\Gamma_k$ on Dyn$(R)$: this follows from the fact that Dyn$(R)$ can be defined intrinsically from $G_{/k_s} $ (cf. [LIE VIII], \S 4, no 4, Scholie, or [SGA 3], expos\'{e} XXIV, \S3, p.344). In what follows (with the only exception of \S 6.7) we make the assumption that {\sl the action of } $\Gamma_k$ {\sl on} Dyn$(R)$ {\sl is trivial}: one then says that $G$ is {\sl of inner type} (it can be obtained from a split group $G_0$ by a Galois twist coming from  the adjoint group of $G_0$).
\vskip 0.3cm
\noindent {\sl Examples of groups of inner type{\rm{ :}}}

\noindent - ${\bf SL}_n$, or more generally, ${\bf SL}_D$, where $D$ is a central simple algebra over \nolinebreak$k$. 

\noindent - \!Any group $G$ whose root system has no non-trivial automorphism, e.g. any group of type $A_1, B_r, C_r, G_2, F_4, E_7, E_8$.

\subsection{Statement of the theorem} We fix $\ell, k$, and the root system $R$ of $G$. Recall that $\Im \chi_{_{\ell^\infty}}$ is a closed subgroup of $\Z^*_\ell$. Define:
$$
M(\ell , k,R) = \inf_{x\, \in\, \Im\, \chi_{_{\ell^\infty}}} \sum v_\ell (x^{^{d_i}}-1) = \inf_{g \,\in\, \Gamma_k} \sum v_\ell (\chi_{_{\ell^\infty}} (g)^{^{d_i}} - 1)\, .
$$
This is either an integer $\ge 0$ or $\infty$ (it is $\infty$ if and only if the invariants $m,t$ of $k$ defined in \S 4 are such that $m = \infty$ and $t$ divides one of the $d_i$'s, see prop.4 below).

\begin{mytheorem6}  Let $A$  be a finite subgroup of $G(k)$. Then $v_\ell (A) \le M\,({\ell , k,R)}$. {\rm (Recall that} $G$ {\rm is semisimple of inner type, cf. \S 6.1.)}\end{mytheorem6} \vskip0.2cm

This is what we call the ``M-bound" for $v_\ell(A)$. It will be proved in \S 6.5 below by a method similar to Minkowski's. We shall see in Lect. III that it is ``optimal" except possibly in the case $\ell = 2$, type (c) of \S 4.2.

For computations, it is useful to write $M$$(\ell ,k,R)$ explicitly in terms of the invariants $t$ and $m$ of \S 4:
\setcounter{proposition}{3}
\begin{proposition}
\rm{ 
(1) {\sl If} $\ell \not= 2$ {\sl or if} $\ell = 2, t = 1$ (case (a)), {\sl one has}
$$
M(\ell ,k,R) = \mathop{\sum_i}_{d_i \equiv \,\,0 \,\,( {\rm mod} \,t)} \big(m + v_{\ell}(d_i)\big)
$$
(2) {\sl If} $\ell = 2$ {\sl and} $t = 2$ \big(cases (b) and (c)\big), {\sl one has}
$$ M (2,k,R) = r_1 + mr_0 + v_2 (W)\, ,
$$
{\sl where} $r_0$ (resp. $r_1$) {\sl is the number of indices} $i$ {\sl such that} $d_i$ {\sl is even} (resp. $d_i$ {\sl is odd}).
}
\end{proposition}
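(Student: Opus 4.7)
The approach is to compute the infimum
\[
M(\ell, k, R) = \inf_{x \in \Im \chi_{\ell^\infty}} \sum_{i} v_\ell(x^{d_i} - 1)
\]
directly from the explicit description of $\Im \chi_{\ell^\infty}$ given in \S 4, using the standard ``lifting-the-exponent'' formula $v_\ell(u^n - 1) = v_\ell(u - 1) + v_\ell(n)$, valid for $u \in 1 + \ell\cdot\Z_\ell$ when $\ell$ is odd and for $u \in 1 + 4\cdot\Z_2$ when $\ell = 2$.

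First I would decompose every $x \in \Im \chi_{\ell^\infty}$ into its root-of-unity part and its principal-unit part. When $\ell$ is odd, or when $\ell = 2$ and we are in type (a), this is a genuine direct product decomposition: $x = z\cdot u$ with $z \in C_t$ and $u \in 1 + \ell^m\cdot \Z_\ell$. When $\ell = 2$ is in types (b) or (c), one writes $x = \epsilon\cdot y$ with $\epsilon \in \{\pm 1\}$ and $y \in 1 + 2^m\cdot\Z_2$, subject to the constraints defining $\Im\chi_{2^\infty}$ (in case (c) the two factors vary independently; in case (b) they are linked via the generator $-1 + 2^m$).

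Next I would compute $v_\ell(x^{d_i} - 1)$ for each $i$, distinguishing three regimes: (i) if the root-of-unity factor of $x^{d_i}$ equals $1$, the term equals $v_\ell(u^{d_i} - 1) = v_\ell(u-1) + v_\ell(d_i)$; (ii) if $\ell$ is odd and $z^{d_i} \neq 1$, then $z^{d_i} - 1$ is a unit in $\Z_\ell$ (its reduction mod $\ell$ is a nontrivial element of $\F_\ell^*$), so the term is $0$; (iii) for $\ell = 2$, $\epsilon = -1$ and $d_i$ odd, one has $x^{d_i} - 1 = -(y^{d_i} + 1)$ with $y^{d_i} \equiv 1 \pmod 4$, hence $v_2 = 1$. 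Minimizing the sum then amounts to choosing $z$ of maximal order $t$ (so that regime (ii) applies to as many $i$ as possible), choosing $\epsilon = -1$ whenever available, and taking $v_\ell(u - 1) = m$ exactly. Summing the contributions and using $\sum_i v_\ell(d_i) = v_\ell(|W|)$ yields formulas (1) and (2); in particular, in (2) the odd $d_i$ each contribute $1$ (total $r_1$) and the even $d_i$ each contribute $m + v_2(d_i)$ (total $m r_0 + v_2(|W|)$).

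Finally I would verify that this is genuinely the infimum. Any other admissible $x$ is worse because either its root-of-unity part has smaller order (producing extra nonzero terms, each of size $\geq m + v_\ell(d_i)$) or its principal-unit part satisfies $v_\ell(u-1) > m$ (strictly enlarging every nonzero term). The main obstacle is the case $\ell = 2$, type (b): there $\Im\chi_{2^\infty} = \langle -1 + 2^m\rangle$ is not the direct product $C_2 \times (1 + 2^m\Z_2)$, so one must check that the optimum $(\epsilon,y) = (-1,\, 1 - 2^m)$ is reached within this cyclic subgroup (namely by the generator itself, since $-1 + 2^m = -(1 - 2^m)$), and that every element with $\epsilon = 1$, being an even power of the generator, is forced to satisfy $v_2(y - 1) \geq m + 1$, which makes its contribution strictly larger than the claimed minimum $r_1 + m r_0 + v_2(|W|)$.
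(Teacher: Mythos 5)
Your argument is correct and follows essentially the same route as the paper: decompose each $x\in\Im\chi_{\ell^\infty}$ into its torsion part and principal-unit part, use the lifting-the-exponent identity $v_\ell(u^{d}-1)=v_\ell(u-1)+v_\ell(d)$ to evaluate each summand, show the claimed value is a lower bound for all $x$, and exhibit a specific $x$ (of the form $z_t u$ with $v_\ell(u-1)=m$, or $-1+2^m$ when $\ell=2$, $t=2$) attaining it. Your explicit check that in case (b) the cyclic group $\langle -1+2^m\rangle$ still contains an element realizing $(\epsilon,v_2(y-1))=(-1,m)$, and that even powers of the generator are strictly worse, is a point the paper leaves implicit but is exactly the right thing to verify.
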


\begin{proof}
Let us begin with the case $\ell \not= 2$. One shows first that, if $t|d$, one has $v_\ell (x^d - 1) \ge m + v_\ell (d)$ for every $x \in \Im\chi_{_{\ell^\infty}}$. (This is easy, since $x$ can be written as $zu$ with $z^t = 1$ and $v_\ell (u-1) \ge m$, hence $x^d-1 = u^d-1.)$

This already shows that $M(\ell ,k,R) \ge \sum_{t|d_i} \big(m + v_\ell (d_i)\big)$. To prove the opposite inequality, one chooses $x \in \Im \chi_{_{\ell^\infty}}$ of the form $zu$ with $z$ of order $t$ and $v_\ell(u\!-\!1) = m$. One gets (1).

The same argument works if $\ell = 2$ and $t = 1$. If $\ell = 2$ and $t = 2$, one has 
\begin{eqnarray*}
v_2 (x^d -1 ) & \ge & m + v_2 (d) \,\,\, {\mbox{ if $d$ is even}}\\
v_2(x^d -1) & \ge &1\,\,\, {\mbox {if $d$ is odd}}\, ,
\end{eqnarray*}
for every $x \in \Im\chi_{_{2^\infty}}$. This gives:
$$
M(2,k,R) \geq \sum_{d_i\,\,{\rm odd}} 1 + \sum_{d_i\,\,{\rm even}} \big(m + v_2 (d_i)\big) = r_1 + mr_0 + v_2 (W)\, .
$$
To get the opposite inequality, observe that $x = -1 + 2^m$ belongs to $\Im \chi_{_{2^\infty}}$ and check that $\sum v_2 (x^{^{d_i}} -1)$ is equal to $r_1 + mr_0 + v_2(W)\, .$
\end{proof}

\begin{corollary}{\rm{
{\sl Let} $a(t)$ {\sl be the number of indices $i$ such that} $d_i \equiv 0$ (mod $t$). {\sl If} $a(t) = 0,$ {\sl then} $G(k)$ {\sl is} $\ell$-{\sl torsion free}. 
\vskip0.1cm
Indeed, if $a(t) = 0$, the sum occurring in prop.4 is an empty sum, hence $M(\ell ,k,R) = 0$ and one applies th.6.
\hfill $\Box$
}}\end{corollary}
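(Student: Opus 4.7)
My plan is to chain Theorem~6 with the explicit formulas provided by Proposition~4. First I would reduce ``$G(k)$ is $\ell$-torsion free'' to a statement about finite subgroups: any element $g \in G(k)$ of order $\ell$ would generate a cyclic subgroup $A = \langle g \rangle$ of order $\ell$, hence $v_\ell(A) = 1 > 0$; conversely, if no finite subgroup has positive $\ell$-valuation then there is no element of order $\ell$. So it suffices to prove $v_\ell(A) = 0$ for every finite $A \subset G(k)$, and by Theorem~6 this follows at once from the bound $M(\ell, k, R) = 0$. The whole argument thus reduces to reading off $M(\ell, k, R) = 0$ from Proposition~4 under the hypothesis $a(t) = 0$.

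In the principal case of Proposition~4---either $\ell \neq 2$, or $\ell = 2$ with $t = 1$---the formula reads
$$M(\ell, k, R) = \sum_{d_i \equiv 0 \,(\mathrm{mod}\, t)} \bigl(m + v_\ell(d_i)\bigr),$$
which is by definition a sum over the $a(t)$ indices $i$ with $t \mid d_i$. When $a(t) = 0$ this is an empty sum, hence equals $0$, and Theorem~6 then forces $v_\ell(A) = 0$ for all finite $A \subset G(k)$, as required.

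The one subtle point is case~(2) of Proposition~4 ($\ell = 2$ and $t = 2$), where the formula is instead $M(2, k, R) = r_1 + m r_0 + v_2(W)$, and it is not obvious that this vanishes when $a(2) = r_0 = 0$. I would handle this by observing that the hypothesis $a(t) = 0$ is \emph{vacuous} in this case: for any semisimple group, the quadratic Casimir contributes an invariant of degree $d_i = 2$ (one such degree for each simple factor of the root system~$R$), so $r_0 \geq 1$, and hence $a(2) = r_0 \geq 1$. The principal case therefore exhausts all situations in which the hypothesis of the corollary is satisfied. The proof is essentially a one-line deduction from Proposition~4 and Theorem~6; the only point requiring attention is ruling out case~(2), which is immediate from the structure of the Weyl-group degrees.
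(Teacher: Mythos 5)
Your proof is correct and follows the paper's intended route: read off $M(\ell,k,R)=0$ from Proposition~4 under the hypothesis $a(t)=0$ and then invoke Theorem~6. The one place where you add genuine value is in handling case~(2) of Proposition~4 (i.e.\ $\ell=2$, $t=2$), where the formula $M(2,k,R)=r_1+mr_0+v_2(W)$ is not literally ``a sum over the indices $i$ with $t\mid d_i$,'' so the paper's phrase ``the sum occurring in prop.~4 is an empty sum'' does not apply verbatim. You correctly observe that this case is vacuous: for any nontrivial semisimple $G$, the lowest invariant degree is $d_1=2$ (there is always a $W$-invariant quadratic form on the reflection representation), so $r_0\geq 1$ and hence $a(2)\geq 1$, meaning the hypothesis $a(t)=0$ can never hold when $t=2$. (If $R$ is empty the corollary is trivial.) Serre's one-line proof implicitly relies on exactly this fact; you have simply made it explicit. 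One minor stylistic remark: the phrase ``quadratic Casimir'' is a slight abuse here --- the $d_i$ are degrees of basic $W$-invariants on the Cartan, and the relevant fact is just that there is always a $W$-invariant quadratic form, giving $d_1=2$ --- but the substance of your argument is right.
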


\subsection{Two examples: $A_1$ and $E_8$} We take $k = \Q$, so that $t = \ell-1$ and $m=1$ if $\ell > 2$ and $t = m = 2$ if $\ell = 2$.

\subsubsection{Type $A_1$} There is only one $d_i$, namely $d_1 = 2$, and prop.4 gives:
$$
M\big(\ell ,\Q,A_1\big) = \left\{
\begin{array}{lll}
3&{\mbox{if}}&\ell = 2\\
1&{\mbox{if}}& \ell = 3\\
0&{\mbox{if}}& \ell > 3\, .
\end{array}
\right.
$$
In other words, every finite subgroup of $G(\Q)$ has an order which divides $2^3\!\cdot \!3$. This bound is optimal in the following sense:

(a) The split adjoint group $\P\G\L_2 (\Q)$ contains both a subgroup of order 3 and a dihedral subgroup of order $8$ (but no subgroup of order 24).

(b) The simply connected group ${\bf SL}_{\bf H} (\Q)$, where $\H$ is the standard quaternion division algebra, contains a subgroup of order $24$ which 
is isomorphic to ${\bf SL}_2 (\F_3)$. However the split group ${\bf SL}_2 (\Q)$
does not contain any subgroup of order 8 (but it does contain cyclic subgroups of order 3 and 4).

\subsubsection{Type $E_8$} If we define $M(\Q,E_8)$ as $\prod_\ell \ell^{^{M(\ell , \Q,E_8)}}$, prop.4 gives:
$$
M(\Q ,E_8) = 2^{30}\!\cdot \!3^{13}\!\cdot \!5^5\!\cdot\! 7^4\!\cdot\! 11^2\!
\cdot\! 13^2\!\cdot \!19\!\cdot\! 31,\,\, {\mbox{see e.g. [Se 79], \S 3.3}}.
$$
By th.6, the order of every finite subgroup of $G(\Q)$ divides $M(\Q,E_8)$. As we shall see in the next lecture, this multiplicative bound is optimal.

\subsection{A Chebotarev-style result} We need such a result in order to generalize Minkowski's method of \S 1. 

Let $L$ be a normal domain which is finitely generated over $\Z$ as a ring, and let $k$ be its field of fractions. If $d = \dim (L)$ denotes the Krull dimension of $L$ ([AC VIII], \S 1), one has ({\sl{loc.cit.}}, \S 2):
$$
\begin{array}{rclcrcl}
d&= &1 + {\rm tr.deg} (k/\Q)&{\mbox{if}}&\car(k)& = & 0\\
d &=&{\rm tr.deg} (k/\F_p)&{\mbox{if}}&\car(k)&= &p > 0\,.
\end{array}
$$

Let Specmax$(L)$ be the set of the maximal ideals of $L$ (= set of closed points of Spec$(L)$). If $x \in$ Specmax$(L)$, the residue field $\kappa (x) = L/x$ is finite (see e.g. [AC V], p. 68, cor. 1). We put $Nx = |\kappa(x)|$; it is the {\sl norm} of $x$.

When $d = 0$, $L$ is a finite field, and Specmax$(L)$ has only one element. If $d > 0$ (e.g. when $\car (k) = 0$), then Specmax$(L)$ is infinite. More precisely, the Dirichlet series $z(s) = \sum_x 1/(Nx)^s$ converges for Re$(s) >d$, and one has
\begin{equation} 
z(s) \sim \log \big(1/(s-d)\big) \quad {\mbox{when}}\quad s \rightarrow d\quad ({\mbox{with}}\,\, s >d)\, .
\end{equation}
See [Se 65], \S 2.7, which only contains a sketch of proof; complete details (for a slightly weaker statement) can be found in [Pi 97], App. B
%begin Fussnote
\footnote{When $\car(k) = 0$\,\, one can give a stronger statement, in the spirit of the Prime Number Theorem: 

For every $X \ge 2$, call $\pi_L(X)$ the number of $x \in$ Specmax$(L)$ such that $Nx \le X$. Then:
$$
\pi_L(X) = (1/d) \,X^d\!/\log X + O (X^d\!/\log^2 X)\quad {\mbox{when}}\quad X \rightarrow \infty \, .$$
The general Chebotarev density theorem can also be stated (and proved) in terms of such ``natural" density (standard method: use Weil-Deligne estimates to reduce everything to the known case $d = 1$). }; see \linebreak
also [FW 84], pp.206-207.
%end Fussnote

\vskip0.2cm
Let now $n$ be an integer $\ge 1$ which is invertible in $L$ (and hence in $k$). Let $\chi_n : \Gamma_k \rightarrow (\Z/n\Z)^*$ denote the $n$-th cyclotomic character of $k$, cf. \S 4.0. As in \S 4, we shall be interested in $\Im \chi_n 
\subset (\Z/n\Z)^*$.
 
{\begin{mytheorem7}
\label{thm7} Let $c$ be an element of $(\Z/n\Z)^*$, and let $X_c$ be the set of all $x \in$ {\rm Specmax}$(L)$ such that $Nx \equiv c$ {\rm (mod $n$)}. Then {\rm{:}}

{\rm a)} If  $c\notin \Im \chi_n$, then $X_c = \varnothing\,.$

{\rm b)} If $c\in \Im \chi_n$ and $d > 0$, then $X_c$ is Zariski-dense in {\rm Specmax}$(L)$ {\rm (or in Spec}$(L)$, {\rm this amounts to the same).} In particular, $X_c$ is infinite.\vskip0.2cm

 {\rm A more concrete formulation of b) is that, for every non-zero $f \in L$, there exists an $x$ with $f \not\in x$ and $Nx \equiv c$ (mod $n$).}
\end{mytheorem7}}

\noindent {\sl Example.} Take $L = \Z [1/n]$. Then Specmax$(L)$ is the set of all prime numbers which do not divide $n$, and th.7 translates into Dirichlet's theorem on the existence of primes in arithmetic progressions.

\subsection*{\sl Proof of theorem 7.} The group $C = \Im \chi_n$ is the Galois group of the cyclotomic extension $k(z_n)/k$. Let $L_n$ be the integral closure of $L$ in $k(z_n)$. One checks by standard arguments that the ring extension $L_n/L$ is finite and \'{e}tale. In geometric terms, Spec$(L_n) \rightarrow$ Spec$(L)$ is a finite \'{e}tale covering. The group $C$ acts freely on Spec$(L_n)$, with quotient Spec$(L)$. For every closed point $x$ of Spec$(L)$, the Frobenius element $\sigma_x$ of $x$ is a well-defined conjugacy class of $C$ (hence an element of $C$ since $C$ is commutative). Moreover, if we view $C$ as a subgroup of $(\Z/n\Z)^*$, $\sigma_x$ is the image of $Nx$ in $\Z/n\Z$. This proves a).
 
 Suppose now that $d > 0$ and that $c$ belongs to $C = \Im \chi_n$. Let $z_c(s)$ be the Dirichlet series $\sum 1/(Nx)^s$, where the sum is over the elements $x$ of $X_c$. The general Chebotarev density theorem ([Se 65], [Pi 97]) gives:
 \begin{equation}\label{eq642}
 z_c (s) \sim \frac{1}{|C|} \log (1/(s-d))\quad {\mbox{when}}\,\,\, s \rightarrow d\quad {\mbox{with}} \quad s > d\, .
 \end{equation}
 In particular, we have $z_c(d) = + \infty$. If the Zariski closure $\overline{X}_c$ of $X_c$ were of dimension $< d\!-\!1$, we would have $z_c(d) < \infty$, as one sees by splitting $\overline{X}_c$ into irreducible components, and applying (6.4.1). Hence b).
 \hfill$\Box$
 
 \subsection{\bf Proof of theorem 6} Let $A \subset G(k)$ be as in th.6. We want to prove that 
 $$
 v_\ell (A) \le M(\ell ,k,R)\, .$$
 We do it in three steps:
 
 \subsubsection{\bf The case where $k$ is finite} Put $q = |k|$. It is well-known that 
 $$
 |G(k)| = q^N \prod (q^{^{d_i}} - 1),\quad\quad {\mbox{where}}\quad N = |R|/2 = \sum (d_i - 1).
 $$
 This shows that $v_{\ell}(A) \le \sum v_\ell (q^{^{d_i}}-1)$. The integer $q$, viewed as an element of $\Z^*_\ell$, is a topological generator of $\Im \chi_{_{\ell^\infty}}$. Hence every element $u$ of $\Im \chi_{_{\ell^\infty}}$ is an $\ell$-adic limit of powers of $q$ and this implies that $v_\ell (u^d - 1) \ge v_\ell (q^d - 1)$ for every $d \ge 1$. Hence the lower bound which defines $M(\ell ,k,R)$ is equal to $\sum v_\ell (q^{^{d_i}}-1)$; this proves th.6 in the
case where $k$ is finite.
 
 \subsubsection{\bf The case where $k$ is finitely generated over its prime subfield} By 6.5.1, we may assume that $k$ is infinite. We need a subring $L$ of $k$, with field of fractions $k$, which has the following properties:

(a) $L$ is normal, finitely generated over $\Z$ and contains $1/\ell$.
 
(b) $G$ comes by base change from a semisimple group scheme $\underline{G}$ over $L$, in the sense of [SGA 3], XIX. 2.7.
 
(c) $A$ is contained in the group $\underline{G}(L)$ of the $L$-points of $\underline{G}$.

 \begin{lemma} There exists such an $L$.
 
{\rm This is standard, see e.g. [EGA IV], \S 8.1}\hfill$\Box$
  \end{lemma}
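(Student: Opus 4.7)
\smallskip

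\noindent\emph{Proof plan.} The plan is a standard spreading-out argument along the lines of [EGA IV], \S 8, followed by one normalization step.

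First, I would start from any finitely generated $\Z$-subalgebra $L_0\subset k$ whose field of fractions is $k$; such $L_0$ exist because $k$ is finitely generated over its prime subfield. After adjoining $1/\ell$, I may also assume $1/\ell \in L_0$. This takes care of having a finitely generated $\Z$-subring of $k$ containing $1/\ell$, with fraction field $k$.

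Next I would spread $G$ out over such a base. Since $G$ is a $k$-scheme of finite presentation, the general limit results of [EGA IV], 8.8.2 and 8.10.5, provide a finitely generated $L_1\supset L_0$ and an affine group scheme $\underline G$ over $L_1$ whose generic fibre is $G$. The locus in $\Spec L_1$ over which $\underline G$ is a semisimple group scheme in the sense of [SGA 3], XIX.2.7, is open and contains the generic point, so after replacing $L_1$ by a localization $L_1[1/f]$ for a suitable nonzero $f$, I may assume $\underline G/L_1$ is semisimple. The finite set $A\subset G(k)$ is a finite collection of $k$-points of $\underline G$; by inverting the finitely many denominators that occur in their coordinates, I can enlarge $L_1$ to a finitely generated $L_2$ with $A\subset \underline G(L_2)$. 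Throughout, $1/\ell$ stays in the ring and the fraction field stays equal to $k$.

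Finally, take $L$ to be the integral closure of $L_2$ in $k$. A finite-type $\Z$-algebra is Nagata (in fact excellent), so $L$ is finite as an $L_2$-module. In particular $L$ is itself finitely generated as a $\Z$-algebra, contains $1/\ell$, has $k$ as its field of fractions, and is normal by construction. Base-changing $\underline G$ from $L_2$ to $L$ preserves the semisimple group scheme structure, and $A\subset \underline G(L_2)\subset \underline G(L)$, so $L$ satisfies (a), (b), (c).

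The whole argument is a chain of routine spreading-out manipulations, and the only non-bookkeeping ingredient is Nagata's finiteness theorem for the normalization of a finite-type $\Z$-algebra, which is what ensures that the final ring $L$ remains finitely generated over $\Z$; this is the one step where something could in principle fail, so I would flag it as the main point to verify. \hfill$\Box$
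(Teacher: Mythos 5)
Your argument is correct and is precisely the standard spreading-out argument that the paper is gesturing at with its citation of [EGA IV], \S 8.1: spread out the group scheme and the finitely many points of $A$ over a finitely generated $\Z$-subalgebra, invert enough elements to land in the semisimple locus and to contain $1/\ell$, then normalize at the end. The one step you correctly flag as non-routine — finiteness of the normalization of a finite-type $\Z$-algebra (Nagata/excellence) — is indeed what keeps $L$ finitely generated, and the rest is bookkeeping as you describe.
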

 Let us now choose $(L,\underline{G})$ with properties (a), (b) and (c). For every \linebreak$x \in $ Specmax$(L)$, the fiber $\underline{G}_x$ of $\underline{G}$ at $x$ is a semisimple group over $\kappa (x)$, of type $R$. Moreover, the Dynkin diagram of $\underline{G}$ is finite \'{e}tale over Spec$(L)$, cf. [SGA 3], XXIV.3.2; since it is ``constant" for the generic fiber (i.e. over $k$) it is constant everywhere; this shows that the $\underline{G}_x$ are of inner type. The inclusion map $i: A \rightarrow \underline{G}(L)$ gives for every $x$ a homomorphism\linebreak
$i_x : A \rightarrow \underline{G}\big(\kappa (x)\big)$. Since $i$ is injective, there is an open dense subset $X_0$ of Specmax$(L)$ such that $i_x$ is injective for all $x \in X_0$. We thus get:
$$
v_\ell (A) \,\, \le\,\, v_\ell \big(\underline{G} (\kappa (x))\big) = \sum v_\ell \big((Nx)^{^{d_i}}-1\big)\quad {\mbox{for all}} \quad x \in X_0,$$
cf. 6.5.1. Let $u$ be any element of $\Im \chi_{_{\ell^\infty}}$. By applying th.7 to the image of $u$ in $(\Z/\ell^j\Z)^*$ with $j = 1,2,\dots ,$ we find a sequence of points $x_j$ of $X_0$ such that $\lim Nx_j = u$ in $\Z^*_\ell$. We have:
$$
v_\ell (u^{^{d_i}} -1) = \lim_{j \rightarrow \infty} \sum v_\ell \big((Nx_j)^{^{d_i}}-1\big)\, ,
$$
and applying the formula above to each of the $x_j$'s we obtain
$$
v_\ell(A) \le \sum v_\ell (u^{^{d_i}} - 1)\quad\quad {\mbox{for every}} \quad u \in \Im \chi_{_{\ell^\infty}}\, .
$$
This proves th.6 in the case 6.5.2.
\vskip 0.2cm
[{\small Variant: One reduces the general case to the case where $\dim(L) = 1$ by using Hilbert's irreducibility theorem, as explained in [Se 81], p.2; in the case $\dim(L) = 1$, one can apply the standard Chebotarev theorem instead of the general one.}]

\subsubsection{\bf The general case} The same argument as for lemma 7 shows that $G$ comes by base change from a semisimple group $G'$ over a subfield $k'$ of $k$ which is finitely generated over the prime subfield of $k$ (i.e. $\F_p$ or $\Q$). Moreover, one may assume (after enlarging $k'$ if necessary) that $A$ is contained in $G'(k')$. The Galois group $\Gamma_{k'}$ acts on the Dynkin diagram Dyn$(R)$ of $G'$ (which is the same as the one of $G$). Let $k''$ be the Galois extension of $k'$ corresponding to the kernel of $\Gamma_{k'} \rightarrow \Aut$ Dyn$(R)$. Since $G$ is of inner type over $k$, the field $k''$ is contained in $k$. By base change to $k''$, $G'$ gives a semisimple group $G''$ which is of inner type and we may apply 6.5.2 to $(G'',A)$. We get $v_\ell (A) \le M(\ell ,k'',R)$. Since $k''$ is contained in $k$, we have $M(\ell ,k'',R) \le M(\ell ,k,R)$ : the group $\Im \chi_{_{\ell^\infty}}$ can only decrease by field extensions. Hence $v_\ell (A) \le M(\ell ,k,R)$. \hfill$\Box$

\subsubsection{\bf Remark} Surprisingly, the proof above does not really use the hypothesis that $A$ is a subgroup of $G(k)$. It uses only that $A$ {\sl acts freely on} $G$, viewed merely as a $k$-variety (and not as a homogeneous space); this is indeed enough to ensure that $v_\ell (A) \le v_\ell (G(k))$ when $k$ is finite. Here is an example: take $G = {\bf SL}_2$, $\ell = 2$, $k = \Q$; the M-bound is 3, which means that any finite 2-subgroup of ${\bf SL}_2 (\Q)$ has order $\le 8$. As was said in \S 6.3.1, there is in fact no subgroup of order 8 in ${\bf SL}_2(\Q)$. But one can make a cyclic group of order 8 act freely on the variety ${\bf SL}_2$: take for instance the group generated by the automorphism:
$$
\begin{pmatrix}a&b\cr c&d\cr\end{pmatrix} \mapsto \begin{pmatrix}d-c&-c-d\cr
{\frac{(a-b)}{2}}&{\frac{(a+b)}{2}}\cr\end{pmatrix}=\begin{pmatrix} 0&-1\cr 
{\frac{1}{2}}&0\cr\end{pmatrix}\begin{pmatrix} a&b\cr c&d\cr\end{pmatrix}\begin{pmatrix} 1&1\cr -1&1\cr\end{pmatrix}.$$ 

Hence, even in this bad-looking case, the M-bound can claim to be ``optimal".

\subsection{An analogue of Sylow's theorem}
\begin{mytheorem8} Let $A$ and $A'$ be two finite $\ell$-subgroups of $G(k)$. Assume that $v_\ell (A)$ is equal to the {\rm M-}bound $M(\ell ,k,R)$. Then there exists $y \in G(\bar{k})$ such that $yA'y^{-1} \subset A.$
\end{mytheorem8}

\begin{corollary}
If both $A$ and $A'$ attain the {\rm M-}bound, then they are geometrically conjugate {\rm (i.e. conjugate in $G(\bar{k}))$.} In particular, they are isomorphic.
\end{corollary}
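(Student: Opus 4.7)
The plan mirrors the proof of Theorem~$1'$ in \S1.5: spread out, reduce modulo a well-chosen maximal ideal, apply Sylow's theorem in the finite fiber, then lift the resulting conjugacy back to the generic fiber. When $k$ is finite the result is immediate from Sylow's theorem in the finite group $G(k)$, since $v_\ell(A)=M(\ell,k,R)$ forces $A$ to be a full $\ell$-Sylow of $G(k)$. So assume $k$ is infinite. Exactly as in \S6.5.3, reduce to the case where $k$ is finitely generated over its prime subfield: $(G,A,A')$ descend to some finitely generated inner-type subfield $k_0\subset k$, the M-bound can only shrink under this descent, and a point of the transporter over $\overline{k_0}$ base-changes to one over $\overline{k}$. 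Then pick, as in Lemma~7 of \S6.5.2, a normal subring $L\subset k$ of finite type over $\Z$, with fraction field $k$, containing $1/\ell$, such that $(G,A,A')$ extends to $(\underline G,A,A')$ over $L$ with $\underline G$ semisimple of type $R$ and $A,A' \subset \underline G(L)$.

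The key object is the closed subscheme of $\underline G$
$$T \;=\; \{g \in \underline G : g A' g^{-1} \subset A\} \;=\; \bigsqcup_{\phi : A' \hookrightarrow A} T_\phi, \qquad T_\phi \;=\; \{g : g a' g^{-1} = \phi(a')\ \forall a' \in A'\},$$
which is of finite type over $L$. Its image in $\Spec L$ is constructible, so it suffices to prove this image is Zariski-dense; it will then contain the generic point, giving $T_k\neq\varnothing$ as a scheme, hence $T(\overline k)\neq\varnothing$.

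To produce a Zariski-dense set of $x\in\Specmax(L)$ with $T_x\neq\varnothing$, let $u\in\Im\chi_{_{\ell^\infty}}$ realise the infimum in the definition of $M(\ell,k,R)$, and choose $j$ so large that $j > v_\ell(u^{d_i}-1)$ for every $i$. Theorem~7, applied with $n=\ell^j$ to the image of $u$ in $(\Z/\ell^j\Z)^*$, gives a Zariski-dense set of $x$ with $Nx\equiv u \pmod{\ell^j}$; for any such $x$ we have $v_\ell\bigl(\underline G(\kappa(x))\bigr) = \sum v_\ell((Nx)^{d_i}-1) = M(\ell,k,R)$. Intersect with the open dense locus (as in \S6.5.2) on which both $A$ and $A'$ inject into $\underline G(\kappa(x))$. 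On each remaining fiber, the image of $A$ has order $\ell^{M(\ell,k,R)}$, which is the full $\ell$-part of $|\underline G(\kappa(x))|$, so it is an $\ell$-Sylow subgroup of $\underline G(\kappa(x))$. Sylow's theorem in this finite group then produces $\bar y \in \underline G(\kappa(x))$ with $\bar y A'\bar y^{-1}\subset A$, i.e.\ $T_x\neq\varnothing$. This gives the required Zariski-density, so Theorem~8 follows. The corollary is then immediate, since $|yA'y^{-1}|=|A'|=|A|$ forces $yA'y^{-1}=A$.

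The principal obstacle is the lifting step from mod-$x$ Sylow conjugacy to $\overline k$-conjugacy. In the $\GL_n(\Q)$ case of Theorem~$1'$ this was done by comparing two linear representations of $A'$ that agree after reduction mod $p$, then invoking the classical fact that $p\neq\ell$ forces them to be isomorphic over $\Q$; for arbitrary semisimple $G$ no such intrinsic linear representation is available. The dévissage through the closed subscheme $T$ of $\underline G$ — using only that $T$ is of finite type, so that its image in $\Spec L$ is constructible — is what replaces the modular-representation argument and makes the proof go through uniformly.
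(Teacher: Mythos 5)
Your proof is correct and takes essentially the same route as the paper's proof of Theorem~8: spread out to a finitely generated normal $L\subset k$ containing $1/\ell$, form the subscheme $Y$ (your $T$) of $\underline G$ cut out by $gA'g^{-1}\subset A$, use Theorem~7 to produce a Zariski-dense set of closed points $x$ of $\Spec L$ at which the $\ell$-part of $|\underline G(\kappa(x))|$ equals $\ell^{M(\ell,k,R)}$, apply Sylow in each such finite fiber, and conclude $Y(\bar k)\neq\varnothing$ from Zariski-density of the non-empty fibers plus constructibility of the image. The only superficial differences are bookkeeping ones (you take any $u$ attaining the infimum and a large $j$, where the paper fixes the arithmetic progression modulo $\ell^{m+1}$ and uses $Nx\equiv z_t u$ with $v_\ell(u-1)=m$; you invoke an open dense locus for injectivity of $A,A'\to\underline G(\kappa(x))$, where the paper notes it holds for all $x$ because $\ell$ is invertible in $L$), and you make explicit the constructibility step that the paper leaves implicit.
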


\begin{proof}
We may assume that $k$ is finitely generated over its prime subfield. If it is finite, th.8 is just a special case of Sylow's theorem. Let us assume that $k$ is infinite, and choose $L, \underline{G}$ as in \S 6.5.2 with $A,A' \subset \underline{G}(L)$. Let $Y$ be the subscheme of $\underline{G}$ made up of the points $y$ with $yA'y^{-1} \subset  A$. Let $X$ be the set of all $x \in$ Specmax$(L)$ such that $Nx$, viewed as an element of $\Z^*_\ell$, is of the form $z_tu$ with $z_t$ of order $t$ and $v_\ell (u\!-\!1) = m$ (note that $m$ is finite, cf. \S 4.3). It follows from th.7, applied to $n = \ell^{m+1}$, that $X$ is Zariski-dense in Spec$(L)$. If $x \in$ Specmax$(L)$, the groups $A$ and $A'$ inject into $\underline{G} (\kappa (x))$ (this is an easy consequence of the hypothesis that $\ell$ is invertible in $L)$. If moreover $x$ belongs to $X$, then the same computation as in \S 5.2 shows that $v_\ell \big(\underline{G}(\kappa (x)\big)$ is equal to the M-bound, hence $A$ is an $\ell$-Sylow of $\underline{G}(\kappa (x))$. By Sylow's theorem, this shows that $A'$ is conjugate in $\underline{G}(\kappa (x))$ to a subgroup of $A$. In particular, the fiber at $x$ of $Y \rightarrow$ Spec$(L)$ is non-empty. Since $X$ is Zariski-dense, this implies that the generic fiber $Y_{/k}$ of $Y \rightarrow$ Spec$(L)$ is non-empty, i.e. that $Y\!(\bar{k})$ is non-empty.
\end{proof}

\noindent{\sl Remark.} One can show that $Y$ is smooth over $L$, and hence that $Y(k_s) \not= \varnothing$ which is slightly more precise than $Y\!(\bar{k}) \not= \varnothing$.
\vskip 0.3cm
\noindent{\small{\sl Exercise.} Show that a family of polynomial equations with coefficients in $\Z$ has a solution in $\C$ if and only if it has a solution in $\Z/p\Z$ for infinitely many $p$'s.}

\subsection{Arbitrary semisimple algebraic groups} In the previous sections, we have assumed that $G$ is of inner type, i.e. that the natural homomorphism
$$
\varepsilon : \Gamma_k \rightarrow \Aut {\rm Dyn}(R)
$$
is trivial. Let us now look briefly at the general case, where no hypotheses on $\varepsilon$ are made. In order to state the result which replaces th.6 we need to introduce the linear representations $\varepsilon_d$ of $\Gamma_k$ defined as follows:

Let $S = \Q[P_1,\dots,P_r]$ be the $\Q$-algebra of $W$-invariant polynomials, cf. \S 6.1. Let $I = (P_1, \dots , P_r)$ be the augmentation ideal of $S$; put $V = I/I^2$. The vector space $V$ is of dimension $r$, and is graded; the dimension of its $d$-th component $V_d$ is equal to the number of indices $i$ with $d_i = d$. The group $\Aut {\rm Dyn}(R)$ acts on $S$, $V$ and the $V_d$'s; by composing this action with $\varepsilon$, we get for each $d > 0$ a linear representation
$$
\varepsilon_d : \Gamma_k \rightarrow \Aut (V_d)\, .
$$
\vskip 0.3cm
\noindent{\bf Theorem 6$^\prime$.} {\sl Let $A$ be a finite subgroup of $G(k)$. Then}:
$$
v_\ell (A) \le \inf_{g\,\in\, \Gamma_k} \sum_{d} v_\ell \big(\det (\chi_{_{\ell^\infty}} (g)^d - \varepsilon_d(g))\big)$$
(The determinant is relative to the vector space $V_d \otimes \Q_\ell\, .$)
\vskip 0.5cm
\noindent {\sl Proof (sketch)}. The method is the same as the one used for th.6. There are three steps:

(1) Reduction to the case where $k$ is finitely generated over its prime subfield; this is easy.

(2) Reduction to the case where $k$ is finite, via the general Chebotarev density theorem instead of th.7.

(3) The case where $k$ is finite. In that case, if $q = |k|$, and if $\sigma$ is the Frobenius generator of $\Gamma_k$, one has (cf. e.g. [St 68] th. 11.16)

$$
v_\ell \big(G(k)\big) = \sum_d v_\ell \big(\det (q^d - \varepsilon_d(\sigma))\big) = \sum_d v_\ell \big(\det (\chi_{_{\ell^\infty}} (\sigma )^d - \varepsilon_d(\sigma ))\big)$$
hence the desired formula:
$$
(\ast )\quad\quad v_\ell (A) \le \sum_d v_\ell \big(\det (\chi_{_{\ell^\infty}}(g)^d - \varepsilon_d(g))\big)
$$
in the special case $g = \sigma$. By applying this to the finite extensions of $k$, one sees that the inequality $(\ast )$ is valid for all $\sigma^n , n = 1,2,\dots ,$ and hence for all $g \,\in \,\Gamma_k$, since the $\sigma^n$ are dense in $\Gamma_k$.\hfill$\Box$
\vskip 0.2cm
\noindent{\sl Remark.} One may also prove th.6$^\prime$ using $\ell$-adic cohomology, cf. \S 6.8.
\vskip 0.2cm
\noindent{\sl Example.} Take $R$ of type $A_2$, so that  $\Aut$ Dyn$(R) = \{1,-1\}$ and $\varepsilon$ may be viewed as a quadratic character of $\Gamma_k$. The $V_d$'s are of dimension $1$ for $d = 2,3$ and are 0 otherwise. The action of $\Aut$ Dyn$(R)$ on $V_d$ is trivial for all $d$, except $d = 3$. Hence $\varepsilon_2 = 1$, $\varepsilon_3 = \varepsilon$, and th.6$^\prime$ can be rewritten as:
$$
v_\ell (A) \,\le\, \inf_{g\in\Gamma_k} \left\{ v_\ell (\chi_{_{\ell^\infty}} (g)^2 - 1) + v_\ell \big(\chi_{_{\ell^\infty}}(g)^3 - \varepsilon (g)\big)\right\}\, .
$$
A similar result holds for the types $A_r$ ($r > 2$), $D_r$ ($r$ odd) and $E_6$, with 2 (resp. 3) replaced by the even $d_i$'s (resp. the odd $d_i$'s).

\subsection{The cohomological method} Let us consider first the general situation suggested in \S 6.5.4 where a finite group $A$ acts freely on a quasi-projective $k$-variety $X$. As explained in [Il 05], \S 7, one can then give an upper bound for $v_\ell (A)$ in terms of the action of $\Gamma_k$ on the \'{e}tale cohomology of $X$. More precisely, let $H^i_c(X)$ denote the $i$-th \'{e}tale cohomology group of  $X_{/k_s}$, with proper support and coefficients $\Q_\ell$; it is a finite dimensional $\Q_\ell$-vector space which is 0 for $i > 2\!\cdot
\!\dim(X)$. There is a natural action of $\Gamma_k$ on $H^i_c(X)$, and, for each $g \in \Gamma_k$, one can define the ``Lefschetz number" $\Lambda_X(g)$ by the usual formula:
$$
\Lambda_X(g) = \sum_i (-1)^i {\rm Tr}\big(g |H^i_c(X)\big)\, .
$$
One has $\Lambda_X (g) \in \Z_\ell$. Moreover:
\vskip 0.5cm
\noindent{\bf Theorem 6$^{\prime\prime}$.} $v_\ell (A) \le \inf_{g\,\in\,\Gamma_k} v_\ell \big(\Lambda_X(g)\big)\, .$

\begin{proof}
See [Il 05], \S 7, especially cor.7.5. The proof follows the same pattern as the other proofs of the present \S: one uses Chebotarev to reduce to the case where $k$ is finite, in which case the result follows from the fact, due to Grothendieck, that, if $\sigma$ is the (geometric) Frobenius generator of $\Gamma_k$, then $\Lambda_X(\sigma )$ is equal to $|X(k)|$, hence is divisible by $|A|$ since the action of $A$ is free. (As in the proof of th.6$^\prime$, one applies this, not only to $\sigma$ but also to its powers $\sigma^n$, $n > 0$, and one uses the fact that the $\sigma^n$ are dense in $\Gamma_k$\,.)
\end{proof}

If one applies th.6$^{\prime\prime}$ to $A \subset G(k)$, with $A$ acting by left translations on $X = G$, one recovers th.6 and th.6$^\prime$, thanks to the known structure of the cohomology of $G$, cf. e.g. [SGA 4$\frac{1}{2}$], p. 230.

\subsection{The Cremona group: open problems} Recall that the {\sl Cremona group} {\bf Cr}$_r(k)$ is the group of $k$-automorphisms of the field $k(X_1,\dots,X_r)$, i.e. the group of birational automorphisms (or ``pseudo-automorphisms", cf. [De 70]) of the projective $r$-space over $k$. For $r = 1$, one has ${\bf Cr}_1(k) = \P\G\L_2(k)$. Let us assume that $r \ge 2$. As explained in [De 70], ${\bf Cr}_r$ is not an algebraic group, but looks like a kind of very large semisimple group of rank $r$ (very large indeed: its ``Weyl group" is the infinite group $\G\L_r(\Z)$). Not much is known about the finite subgroups of ${\bf Cr}_r(k)$ beyond the classical case $r = 2$ and $k$ algebraically closed. Here is a question suggested by \S 5.1:

- Is it true that ${\bf Cr}_r(k)$ has no $\ell$-torsion if $\varphi (t) > r$? 

\noindent In the special case $k = \Q$, $r = 2$ or $3$, this amounts to:

- Is it true that the fields $\Q(X_1,X_2)$ and $\Q(X_1, X_2, X_3)$ have no automorphism of prime order $\ge 11$? (Automorphisms of order $2$, $3$, $5$ and $7$ do exist.)

It would be very interesting to attack these questions using cohomology, but I do not see how to do this. It is not even clear how to define cohomological invariants of ${\bf Cr}_r(\C)$, since there is no natural topology 
on that group. Still, one would like to give a meaning to a sentence such as 
$$``{\bf Cr}_r(\C) {\hbox{\rm { is connected for }}} r \ge 1 {\hbox{\rm { and simply-connected for }}} r \ge 2."$$

% [{\it Note added Nov.1, 2010.} %For partial answers to these questions, see:

%I.V.Dolgachev and V.A.Iskovskikh, {\it On elements of prime order in the plane Cremona group}, arXiv:0707.4305.

%J-P.Serre, {\it Le groupe de Cremona et ses sous-groupes finis}, S\'em. Bourbaki 2008-2009, no 1000; AstŽrisque 332, SMF (2010), 75-100.

%%%%%%%%%%%%%%%%%%%%%%%%%%%%%%%%%%%%%%%%%%%%%%%%%%%%%%%%%%%%%%%%%%%%%%%
%%%%%%%%%%%%%%%%%%%%%%%%%%%%%%%%%%%%%%%%%%%%%%%%%%%%%%%%%%%%%%%%%%%%%%%%
%\include{serredonna_3}

%\markboth{J.-P. SERRE, ORDERS OF FINITE SUBGROUPS}{bidon}

\markright{LECTURE III: CONSTRUCTION OF LARGE SUBGROUPS}
%\markleft{J.-P. SERRE, ORDERS OF FINITE SUBGROUPS}

 \specialsection*{\bf III. Construction of large subgroups}

\setcounter{section}{8}
\numberwithin{equation}{section}
\numberwithin{equation}{subsection}
\setcounter{subsubsection}{0}
\setcounter{footnote}{2}
\vskip 0.5cm
\begin{center}
{\bf {\S 7. Statements}}
\end{center}
\label{sec1}
We keep the notation of Lecture II: $k$, $\ell$, $\chi_{_{\ell^\infty}}$, $t$, $m$, \ldots.
We consider only semisimple groups over $k$ with a root system $R$ which is {\em irreducible.}
The M-bound of \S 6.2 will be denoted by $M(\ell,k,R)$; 
it only depends on the pair $(\ell,k)$ via the invariants $t$ and $m$, and on $R$ via the degrees $d_1, \ldots, d_r$ of $W$.
We limit ourselves to the case $m < \infty$; see \S 14 for the case  $m=\infty$.

A pair $(G,A)$, where $G$ is of inner type with root system $R$, and 
$A \subset G(k)$ is a finite group, will be called {\em optimal} if $v_\ell(A)$ is equal to the M-bound $M(\ell,k,R)$.
(We could assume that $A$ is an $\ell$-group, but this would not be convenient for the constructions which follow.)
Our goal is to prove:

\begin{mytheorem9}
\label{thm3.1}
If $\ell \neq 2$, an optimal pair $(G,A)$ exists {\rm (}for any $k$, $R${\rm )}.
\end{mytheorem9}

\begin{mytheorem10}
\label{thm3.2}
If $\ell = 2$, an optimal pair $(G,A)$ exists if $\Im \chi_{_{2^\infty}}$ is 
of type {\rm (a)} or {\rm (b)} in the sense of {\rm \S 4.2}
\emph{
(i.e. if $\Im  \chi_{_{2^\infty}}$ can be topologically generated by one element).}
\end{mytheorem10}

\begin{mytheorem11}
\label{thm3.3}
In the case $\ell=2$ and type {\rm (c)}, there exists $(G,A)$ with 
$$v_2(A) = r_0 m + v_2(W)$$
where $r_0$ is the number of indices $i$ such that $d_i$ is even.
\end{mytheorem11}

Note that here the M-bound is $M(2,k,R)=r_1+r_0m+v_2(W)$ with $r_1=r-r_0$, cf. \S6.2, prop.4. Hence $v_2(A)$
differs from $M(2,k,R)$ only by $r_1$. In particular, $A$ is optimal if $r_1=0$. Hence:

\begin{corollary}
%\label{cor1}
If all the $d_i$'s are even {\rm (i.e. if }$-1 \in W$), then an optimal pair $(G,A)$ exists for $\ell = 2$ {\rm (and hence for all $\ell$'s, thanks to th.9).

\vskip.1cm

\noindent This applies in particular to the exceptional types $G_2$, $F_4$, $E_7$ and $E_8.$}
\end{corollary}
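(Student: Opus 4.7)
The plan is to assemble this corollary directly from Theorems 9, 10, and 11, with essentially no extra work. First I would handle $\ell \neq 2$: Theorem 9 produces an optimal pair $(G,A)$ for every field $k$ and every irreducible root system $R$, with no hypothesis at all on the $d_i$'s. So the parenthetical ``hence for all $\ell$'s'' is immediate and the real content is the case $\ell=2$.

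For $\ell=2$ I would split on the trichotomy of \S4.2 for $\Im\chi_{_{2^\infty}}$. In types (a) and (b), the conclusion is exactly Theorem 10. In type (c), Theorem 11 produces $(G,A)$ with $v_2(A)=r_0 m+v_2(W)$, whereas the M-bound given by prop.4 of \S6.2 is $M(2,k,R)=r_1+r_0 m+v_2(W)$. The hypothesis that every $d_i$ is even means $r_0=r$, hence $r_1=r-r_0=0$, so the value produced by Theorem 11 coincides with the M-bound; the pair is therefore optimal. This is the only arithmetic input beyond the three theorems themselves.

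For the ``In particular'' clause I would simply read off the invariant degrees of the exceptional Weyl groups from the tables of [LIE VI]: $G_2$ has degrees $(2,6)$; $F_4$ has $(2,6,8,12)$; $E_7$ has $(2,6,8,10,12,14,18)$; $E_8$ has $(2,8,12,14,18,20,24,30)$. Each list consists entirely of even integers, equivalently $-1$ belongs to the Weyl group in each of these four types (the longest element $w_0$ acts as $-1$), so the hypothesis of the corollary is satisfied in all four cases.

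There is no real obstacle: the substantive constructions are already contained in Theorems 9--11, and the corollary is just the observation that the defect $r_1$ appearing in Theorem 11 vanishes precisely when $-1\in W$.
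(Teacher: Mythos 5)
Your proposal is correct and matches the paper's own reasoning: the paper's remark immediately preceding the corollary observes that in case (c) the defect between Theorem 11's $v_2(A)=r_0m+v_2(W)$ and the M-bound $r_1+r_0m+v_2(W)$ is exactly $r_1$, which vanishes iff all $d_i$ are even (iff $-1\in W$), while cases (a), (b) are Theorem 10 and $\ell\neq 2$ is Theorem 9. Your verification of the degrees for $G_2$, $F_4$, $E_7$, $E_8$ is a small explicit addition, but the logic is the same.
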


\noindent {\sl Remarks}. (1) The simplest case where the M-bound is not attained is $k = \Q$, $\ell =2$, $R$ of type $A_2$,
where $m=2$, $r_0=1$, $r=2$, the $M$-bound is 4, and it follows from [Sch 05]
that $v_2(A) \leq 3$ for every finite subgroup $A$ of $G(\Q)$.

\smallskip
 
(2) In Theorems 9, 10 and 11,  no claim is made on the structure of  $G$ except that it is of inner type and that its root system is of type $R$. However, if one looks closely at the proofs given in the next sections, one sees that $G$ can be chosen to have the following properties:

\noindent - it is simply connected;

\noindent - it splits over the cyclotomic field $k(z_\ell )$ if $\ell >2$, and over $k(i)$ if $\ell = 2.$

Simple examples (such as $k=\Q$, $\ell =3$, $G$ of type $G_2)$ show that it is not always possible to have $G$ split over $k$.

\smallskip

(3) If $G$ is not chosen carefully, the group $G(k)$ may not contain
any large $\ell$-subgroup, even if $k$ contains
all the roots of unity. For instance, when $R$ is of type $A_1$ (resp. of type $E_8$) it is easy (resp. it is possible) to construct a pair $(G,k)$ such that
the only torsion elements of $G(k)$ have order $1$ or $2$ (resp. $G(k)$ is torsion free).
\smallskip

(4) The three theorems above are almost obvious if the characteristic is $p \neq 0$
(especially Theorem 11 since type (c) never occurs!):
one takes a finite field $k_0$ contained in $k$ which has the same invariants $t$ and $m$
(this is easily seen to be possible -- if $k$ is finitely generated over $\F_p$, one chooses
the maximal finite subfield of $k$), 
and one takes for $G$ the group deduced by base change from a split group $G_0$ over $k_0$
with root system $R$.
If we choose for $A$ the finite group $G_0(k_0)$,  it is clear from the way we got the 
M-bound that $v_\ell(A) = M(\ell,k_0,R) = M(\ell,k,R)$, so that $(G,A)$ is optimal.

\vskip0.1cm

In what follows, we shall assume that $\car(k) =0$.
Note also that we could replace $k$  by any subfield having the same invariants $t$ and $m$, 
for instance the intersection of $k$ with the field of $\ell^\infty$-roots of unity.
We could thus assume that {\em $k$ is a cyclotomic number field}, if needed.\vskip.1cm

The proof of Theorem 9 will be given first for classical groups (\S 9),
by explicit elementary constructions similar to those of Schur.
The more interesting case of exceptional groups (\S 12) will use different methods, based on 
Galois twists (\S 10), Tits groups and braid groups (\S 11). 
The case of $\ell=2$ will be given in \S 13. The last section (\S 14) is about $m = \infty$.

% 8. Arithmetic methods
\vskip 0.5cm
\begin{center}
{\bf {\S 8. Arithmetic methods}} $(k = \Q)$
\end{center}
\vskip 0.5cm

These methods are not strong enough to prove the statements of \S 7,
but they give very interesting special cases.

\subsection{\bf Euler characteristics} 

Here, the ground field is $\Q$.
One starts from a split simply connected group scheme $G$ over $\Z$ (this makes sense, cf.~\cite{SGA3}).
One may thus speak of the group $\Gamma = G(\Z)$ of the {\em integral points} of $G$.
It is a discrete subgroup of $G(\R)$.
Its Euler characteristic $\chi (\Gamma)$ (``caract\'eristique d'Euler-Poincar\'e'' in French) is well-defined
(see [Se 71] and [Se 79]); it is a rational number.
Moreover it is proved in [Ha 71] that 
%equation 8.1.1.
\begin{equation}
\label{eq3.1}
\chi(\Gamma) = c \prod_{i=1}^r \frac{1}{2} \zeta(1-d_i) 
= c \prod_{i=1}^r \frac{b_{d_i}}{2d_i},
\end{equation}
where $b_d$ is the $d$-th Bernoulli number,
$\zeta$ is the zeta function and\linebreak
\mbox{$c = |W|/|W_K|$} where 
$W_K$ is the Weyl group of a maximal compact subgroup $K$ of $G(\R)$.
Assume that all $d_i$'s are {\em even} (if not, all the terms in (\ref{eq3.1}) are zero).
Using standard properties of Bernoulli numbers, one can check that 
{\em the {\rm M}-bound relative to $\ell$ is 
$M=\sum_i v_\ell \big ( \den \big ( \frac{1}{2} \zeta(1-d_i) \big) \big)$}, where ``den" means denominator. Hence, if $\ell$ does not divide $c$, and does not divide the numerator of any $\frac{1}{2} \zeta(1-d_i)$
(which is the case if $\ell$ is a so-called regular prime), 
one sees that {\em the denominator of  
{\rm EP}$(\Gamma)$ is divisible by $\ell^M$.}
But a theorem of K.~Brown~\cite{Br74} shows that this is only possible if $\Gamma$ contains a 
finite subgroup of order $\ell^M$.
Hence we get an optimal pair (provided $(c,\ell)=1$, and $\ell$ is regular, say).
\smallskip

\noindent{\sl Example}. Take $G$ of type $E_8$; here $c=3^3\!\cdot\!5$, and the numerators of the $\frac{1}{2} \zeta(1-d_i)$
do not cancel any denominator.
Hence one obtains that a split $E_8$ contains an optimal $A$ for all $\ell \neq 3,5$, with
the extra information that $A$ can be found inside the group $\Gamma = G(\Z)$ -- 
but no information on what it looks like!

\subsection{\bf Mass formulae} 

In \cite{Gr96}, B. Gross considers $\Q$-forms of $G$ such that $G(\R)$ is {\em compact}; 
he also  requires another condition which guarantees that $G$ has a {\em smooth model over $\Z$}.
This condition is fulfilled for types $B$, $D$, $G_2$, $F_4$ and $E_8$.
He then proves a {\em mass formula} \`a la  Minkowski (\cite{Gr96}, prop.2.2):
$$
\sum \frac{1}{|A_\sigma|} = \prod_{i=1}^r \frac{1}{2} \zeta(1-d_i)
$$
where the $A_\sigma$ are the $\Z$-points of the smooth models of $G$ over $\Z$
(taken up to conjugation).
Each $A_\sigma$ is finite.
It is then clear that, if $\ell^N$ is the $\ell$-th part of the denominator of $\prod_{i=1}^r \frac{1}{2} \zeta(1-d_i)$, 
the $\ell$-Sylow subgroup of one of the $A_\sigma$ has order $\geq \ell^N$.
If $N$ is equal to the Minkowski bound $M$ 
(which happens if $\ell$ does not divide the numerator of any of the $\frac{1}{2} \zeta(1-d_i)$),
then such a Sylow has order $\ell^M$, and we get an optimal pair.
Note that there is no extra factor ``$c$'' as in (\ref{eq3.1}).
This works very well for $G_2$, $F_4$, $E_8$ 
(and some classical groups too, cf. \cite{Gr96}):

\noindent $G_2$ - 
	Here the mass is $\frac{1}{4} \, \zeta(-1)\zeta(-5) = \displaystyle \frac{1}{2^63^37}$,
	and it is obtained with just one $A_\sigma$, which turns out to be isomorphic to $G_2(\F_2)$.
	\vskip3mm

\noindent $F_4$ -
	There are two $A_\sigma$'s and the mass formula is 
	$$\begin{array}{rcl} \displaystyle
	\frac{1}{2^{15} \cdot 3^6 \cdot 5^2 \cdot 7} + \frac{1}{2^{12} \cdot 3^5 \cdot 7^2 \cdot 13} 
	&= &\frac{1}{16} \, \zeta(-1)  \zeta(-5)  \zeta(-7)  \zeta(-11) \\
	&= & \displaystyle \frac{691}{2^{15} \cdot 3^6 \cdot 5^2 \cdot 7^2 \cdot 13}.
	\end{array}$$
	\vskip2mm
\noindent $E_8$ -
	Here the numerator is very large, but the denominator is exactly what is needed 
	for the M-bound, namely:
	$$2^{30} \cdot  3^{13} \cdot 5^5 \cdot 7^4 \cdot 11^2 \cdot 13^2 \cdot 19 \cdot 31.$$

%--------------------        Paragraph 9           --------------------          

\vskip 0.5cm
\begin{center}
{\bf {\S 9. Proof of theorem 9 for classical groups}}
\end{center}
\vskip 0.5cm
\setcounter{section}{9}
\setcounter{subsection}{0}

Here $\ell \neq 2$. 
Recall that $\Im \chi_{_{\ell^\infty}} = C_t \times \{1 + \ell^m \Z_\ell\}$, 
where $m \geq 1$ and $t$ divides $\ell-1$.
The M-bound is 
$$
M = \mathop{\sum_i}_{d_i \equiv 0 (\mod t)} \big ( m + v_\ell(d_i) \big ).
$$
We denote by $K$ the field $k(z_\ell)$ generated by a root of unity of order $\ell$.
It is a cyclic extension of $k$, of degree $t$, with Galois group $C_t$. 
It contains $z_{_{\ell^m}}$ but not $z_{_{\ell^{m+1}}}$, cf. \S 4.1.

%--------------------           9.1 The groups A_N and            --------------------          

\subsection{\bf The groups $A_N$ and $A_N^{\,1}$}
\label{subsec3.1}

If $N$ is an integer $\geq 1$, we denote by $A_N$ the subgroup of $\GL_N(K)$
%in der nåÉchsten Linie habe ich \ell statt l eingefåÄgt
(where $K=k(z_\ell)$ as above) generated by the symmetric group $S_N$ and
the diagonal matrices whose entries are $\ell^m$-th roots of unity
(wreath product of $S_N$ with a cyclic group of order $\ell^m$).
We have
\begin{equation}
v_\ell(A_N)= mN + v_\ell(N!).
\end{equation}
The image of $\det_K : A_N \rightarrow K^*$ is $\{\pm 1\} \times \< z_{_{\ell^m}}\>$.
Hence the kernel $A_N^{\,1}$ %of that map 
is such that
\begin{equation}
\label{eq3.5}
v_\ell(A_N^{\,1})= m(N-1) + v_\ell(N!).
\end{equation}
We are going to use $A_N$, and sometimes $A_N^{\,1}$, in order to construct
optimal subgroups for the classical groups $\SL_n$, $\SO_n$ and $\Sp_n$; this is what Schur did in \cite{schur}, \S 6, for the case of $\G\L_n$.

%--------------------           subsection 9.2          --------------------          

\subsection{\bf The case of $\SL_n$}

The $d_i$'s are $2$, $3$,$\ldots$, $n$. 
If we put $N =  \left[ \frac{n}{t} \right ]$, we have
\begin{equation}
\label{eq3.8}
\qquad M =  mN + v_\ell(N!) \qquad \mbox{ if } t \geq 2, \qquad  \qquad \qquad \qquad 
\end{equation}
\begin{equation}
\label{eq3.9}
\qquad M =  m(N-1) + v_\ell(N!) \qquad \mbox{ if } t = 1, \mbox{ in which case } N=n.
\end{equation}

In the case $t \geq 2$, we take $A_N \subset \GL_N(K) \subset \GL_{Nt}(k)$, and
observe that $\det_k(A_N)$ is equal to $\pm1$
(indeed, if $g \in A_N$, then 
\mbox{$\det_k(g) = N_{K/k}\big (\det_K(g) \big )$} 
and one checks that
$ N_{K/k}\big ( z_{_{\ell^m}} \big ) = 1$).
This shows that an $\ell$-Sylow of $A_N$ is contained in $\SL_{Nt}(k)$ and 
hence in $\SL_n(k)$. 
By~(\ref{eq3.8}) we get an optimal pair.

In the case $t=1$, we use the same construction with $A_N^{\,1}$ instead of $A_N$.
The comparison of~(\ref{eq3.5}) and~(\ref{eq3.9}) shows that we get an optimal pair.

%--------------------           subsection 9.3           --------------------          

\subsection{\bf The case of the orthogonal and symplectic groups, $t$ odd}

Let us consider the case of $\Sp_{2n}$. The $d_i$'s are equal to $2,4,\dots ,2n$. Hence, 
if we put $N =  \left[ \frac{n}{t} \right ]$, the M-bound is $mN + v_\ell(N!)$. 
There is a natural embedding:
$$\GL_N\rightarrow \Sp_{2N} \rightarrow \Sp_{2n}$$
defined by $x \mapsto \begin{pmatrix} x & 0 \\ 0 & \,^tx^{-1} \end{pmatrix}$.
The image of $A_N$ by that embedding is optimal.

The same construction works for $\SO_{2n}$ and  $\SO_{2n+1}$.
(Note that, in all these cases, we get the {\em split} forms of the groups of type $B_n$, $C_n$, $D_n$.
This is no longer true 
%will not be true any more 
in the case $t$ is even -- nor in the cases of  \S 12.)

%--------------------           subsection 9.4           --------------------          

\subsection{\bf The case of the orthogonal and symplectic groups, $t$ even}

Since $t$ is even, the group $C_t = \Gal(K/k)$ contains an element $\sigma$ 
of order 2; its image in $\Z_\ell^*$ is $-1$.
Let $K_0$ be the subfield of $K$ fixed by $\sigma$;
we have $[K\!:\!K_0] = 2$, $[K_0\!:\!k] = t_0$ with $t_0 = t/2$.
Moreover $\sigma(z_{_{\ell^m}})$ is equal to $ (z_{_{\ell^m}})^{-1}$;
i.e. $\sigma$ acts on $ z_{_{\ell^m}}$ just as complex conjugation does.
Let us define an {\em hermitian form} $h$ on $K^N$ 
(where $N$ is a given integer $\ge 1$) by the standard formula
$$
\quad
h(x,y) = \sum_{i=1}^N x_i\!\cdot\!\sigma(y_i), \qquad 
\mbox{ if }ÌÉx = (x_1, \ldots, x_N),\, y = (y_1, \ldots, y_N).
$$
If $\U_N$ denotes the {\em unitary group} associated with $h$, it is clear that 
{\em the group $A_N$ defined in {\rm \S 9.1} is contained in $\U_N(K)$.} [We use here the traditional notation $\U_N (K)$ for the unitary group; this is a bit misleading, since $\U_N$ is an algebraic group over $K_0$, and we are taking its $K_0$-points.]

Let $\delta \in K^*$ be such that $\sigma(\delta) = - \delta$, e.g. $\delta = z_{\ell} - z_{\ell}^{-1}$.
We have $K = K_0 \oplus \delta\!\cdot\!K_0$, and $h(x,y)$ can be decomposed as
$$
%$$
\quad  
h(x,y) = q_0(x,y) + \delta\cdot b_0(x,y), \,\,{\mbox{with}}\,\, q_0(x,y) \in K_0, \quad b_0(x,y) \in K_0.
%$$
$$
Then $q_0$ (resp. $b_0$) is a non-degenerate symmetric (resp. alternating) 
$K_0$-bilinear form of rank $2N$. 

\noindent Its trace $q = \Tr_{K_0/k} q_0(x,y)$ (resp. \mbox{$b = \Tr_{K_0/k} b_0(x,y)$)} is of rank
$2Nt_0 = Nt$ over $k$.
We thus get embeddings:
\begin{equation}
A_N \rightarrow \U_N(K) \rightarrow \SO_{2N}(K_0) \rightarrow  \SO_{Nt}(k) 
\end{equation}
\begin{equation}
A_N \rightarrow \U_N(K) \rightarrow \Sp_{2N}(K_0) \rightarrow  \Sp_{Nt}(k).
\end{equation}
Now, for a given $n$, let us define $N$ by 
$N =  \left[ \frac{2n}{t} \right ]=  \left[ \frac{n}{t_0} \right ]$.
By (9.4.2), we get an embedding 
$$
A_N  \rightarrow  \Sp_{Nt}(k) \rightarrow  \Sp_{2n}(k),
$$
and one checks that it is optimal.

The same method gives an embedding of $A_N$ into $\SO_{Nt}(k)$, hence into $\SO_{2n+1}(k)$, and this embedding is also optimal.
As for $\SO_{2n}(k)$, one has to be more careful. The method does give an embedding of $A_N$ into the $\SO_{2n}$ group relative to some quadratic form $Q$, but we have to ensure that such an $\SO_{2n}$ group is {\em of inner type} i.e.  that $\disc(Q) = (-1)^n$ in $k^*/{k^*}^2$. There are three cases:

a) If $2n >Nt$ (i.e. if $t$ does not divide $2n$), we choose 
$Q  = q \oplus q_1$, where $q_1$ has rank $2n-Nt$, and is such that 
$\disc(q) \cdot \disc(q_1) = (-1)^n$. We then have 
$ A_N \subset \SO_{2n,Q}(k)$ and this is optimal.

b) If  $2n =Nt$ and $N$ is even,  we have $\disc(q) = d^N$,  where
$d = \disc(K_0/k)$, hence $\disc(q)$= $1$ in $k^*/{k^*}^2$, which is the same as $(-1)^n$
since $n$ is even. 

c) If $2n=Nt$ and $N$ is odd, we use an optimal subgroup $A$ of  $ \SO_{2n-1}(k)$ relative to a quadratic form $q_0$ of rank $2n-1$. By adding to $q_0$ a suitable quadratic form of rank 1, we get a quadratic form of rank $2n$ and discriminant $(-1)^n$, as wanted. The corresponding embedding 
$$A \rightarrow \SO_{2n-1}(k) \rightarrow \SO_{2n}(k)$$ is optimal. (Note that the $d_i$'s for type $D_n$ are $2,4,\dots, 2n-2$, and $n$.
Hence, if $t\!\not| \, n$, the M-bound for $D_n$ is the same as the M-bound for  $B_{n-1}.)$

%--------------------           Paragraph 10           --------------------          

\vskip 0.5cm
\begin{center}
{\bf {\S 10. Galois twists}}
\end{center}
\vskip 0.5cm

To handle exceptional groups, we have to use {\em twisted} inner forms 
instead of split ones.
We shall only need the most elementary case of twisting, namely the one coming from a
homomorphism $\varphi:\Gamma_k \rightarrow \Aut(G)$.
Let us recall what this means (cf. for example \cite{Se64}, chapter~III):

Let $K/k$ be a finite Galois extension.
Let $X$ be an algebraic variety over $k$, assumed to be quasi-projective
(the case where $X$ is affine would be enough).  Choose a homomorphism
$$\varphi: \Gal(K/k) \rightarrow \Aut_k X.$$
The {\em twist} $X_\varphi$ of $X$ by $\varphi$ is a variety over $k$ which can be 
characterized as follows:

There is a $K$-isomorphism $\theta\!:\!X_{/K}\rightarrow {X_\varphi}_{/K}$ such that $\gamma(\theta) = \theta \circ \varphi(\gamma)$
for every $\gamma \in \Gal(K/k)$.

(Here $X_{/K}$ denotes the $K$-variety deduced from $X$ by the base change 
\mbox{$k \rightarrow K$,}
and $\varphi(\gamma) \in \Aut_k X$ is viewed as belonging to $\Aut_{K}X_{/K}$.)

One shows (as a special case of Galois descent) that such a pair $(X_\varphi,\theta)$ exists,
and is unique, up to isomorphism.

It is sometimes convenient to identify the $K$-points of $X$ and $X_\varphi$ 
via the isomorphism $\theta$.
But one should note that this is not compatible with the natural action of $\Gal(K/k)$
on $X(K)$ and $X_\varphi(K)$;  one has
$$
\qquad 
\gamma \big( \theta(x) \big) = \varphi(\gamma) \big ( \gamma(x) \big )
	 \qquad \mbox{ if }ÌÉ\gamma \in \Gal(K/k), x \in X(K). 
$$
In other words, if we identify $X_\varphi(K)$ with $X(K)$, an element $\gamma$ of  $\Gal(K/k)$
acts on $X_\varphi(K)$ by the {\em twisted action{\rm { :}}}
$$
x \mapsto \varphi(\gamma) \big ( \gamma(x) \big )
$$
In particular, the  {\em $k$-rational points of $X_\varphi$} correspond (via $\theta^{-1}$) 
to the points $x \in X(K)$ such that 
$\gamma(x) = \varphi(\gamma^{-1}) x$ for every $\gamma \in \Gal(K/k)$.

In what follows we apply the $\varphi$-twist to $X=$ split form of $G$,
with $\varphi(\gamma)$ being a $k$-automorphism of $G$ for every $\gamma \in \Gal(K/k)$.
In that case, $G_\varphi$ is a $k$-form of $G$; this form is inner if  all $\varphi(\gamma)$
belong to $G^{\ad}(K)$ where $G^{\ad}$ is the adjoint group of $G$.
The effect of the twist is to make $k$-rational some elements of $G$ which were not.
In order to define $\varphi$, we shall have to use the $k$-automorphisms of $G$ provided by 
the Tits group $W^*$, see next section.

%--------------------           SECTION 11          --------------------          

\vskip 0.5cm
\begin{center}
{\bf {\S 11. A general construction}}
\end{center}
\vskip 0.5cm
\setcounter{section}{11}
\setcounter{subsection}{0}

Here, $G$ is a split simply connected group over $k$, 
and $T$ is a maximal split torus of $G$.
We put $N=N_G(T)$ and $W = N/T$ is the Weyl group.

\subsection{\bf The Tits group}

The exact sequence $1 \rightarrow T \rightarrow N \rightarrow W \rightarrow 1$
does not split in general.
However Tits (\cite{Ti66a}, \cite{Ti66b}) has shown how to construct a 
subgroup\footnote{The construction of $W^*$ depends on more than $(G,T)$:
one needs a \emph{pinning} (``\'epinglage'') of $(G,T)$ in the sense 
of~\cite{SGA3}, XXIII.1.1. 
%\marginpar{\blue precise ref in SGA3?}
} $W^*$ of $N(k)$ having the following properties:

(1) The map $W^* \rightarrow W$ is surjective.

(2) The group $W^* \cap T$ is equal to the subgroup $T_2$ of $T$
made up of the points $x$ of $T$ with $x^2 = 1$.

We thus have a commutative diagram, where the vertical maps are inclusions:
$$
\begin{array}{ccccccccc}
1 & \rightarrow & T_2 & \rightarrow & W^* & \rightarrow & W & \rightarrow & 1 \\
\downarrow & & \downarrow & & \downarrow \, & & \downarrow & & \\
1 & \rightarrow & T & \rightarrow & N \, & \rightarrow & W & \rightarrow & 1 \\
\end{array}
$$
We refer to Tits (\emph{loc. cit.}) and to Bourbaki\footnote{Bourbaki works in the context of compact
real Lie groups; his results can easily be translated to the algebraic setting we use here.}
([LIE X], pp. 115--116, exerc. 12, 13) for the construction and the properties of $W^*$. 
For instance:

If $G$ comes from a split group scheme $\underline{G}$ over $\Z$, 
then $W^*$ is equal to $\underline{N}(\Z)$, 
the group of \emph{integral points} of the group scheme $\underline{N}$.

In the case of $\SL_n$, this means that one can choose for $W^*$ the group of monomial matrices
with non-zero entries $\pm 1$ and determinant $1$. 
For $n=2$, $W^*$ is the cyclic group of order 4 generated by $\begin{pmatrix} 0&1\\-1&0 \end{pmatrix}$.

Note also that $W^*$  is a quotient of the {\emph{braid group}}  ${\bf B}_W$ associated to $W$.
(For the definition of the braid group of a Coxeter group, see e.g.~\cite{BM97}.)

\subsection{\bf Special elements of $W$}

We now go back to our general notation $\ell ,m,t,\ldots$ of Lecture II. 
Recall that the M-bound $M =M(\ell,k,R)$ is given by
\begin{equation}
\label{11.2.1} 
M = \sum_{t | d_i} \left( m + v_\ell(d_i) \right), \qquad \mbox{ cf. \S 6.2}.
\end{equation}
Let  $a(t)$ be the number of indices $i$ such that $d_i \equiv 0 \, (\mod t)$.
We may rewrite  (\ref{11.2.1}) as 
\begin{equation}
M = m a(t) + \sum_{t|d_i} v_\ell(d_i).
\end{equation}

Note that, if no $d_i$ is divisible by $t$, we have $M=0$ 
and the trivial group $A=1$ is optimal.
Hence \emph{we shall assume in what follows that $a(t) \geq 1$.}
\vskip0.1cm
Let now $w$ be an element of $W$.
We shall say that $w$ is {\emph {special}} (with respect to $t$ and $\ell$)
if it has the following four properties:

\noindent (1) $w$ has order $t$ in $W$.

\noindent (2) $w$ is the image of an element $w^*$ of $W^*$ such that
$(w^*)^t \in T_2 \cap C(G)$, where $C(G)$ is the center of $G$.

\noindent (3) The characteristic polynomial of $w$ 
(in the natural $r$-dimensional representation of $W$)
is divisible by $(\Phi_t)^{a(t)}$, where $\Phi_t$ is the $t$-th cyclotomic polynomial.
\newline
(Equivalently: if $z_t$  denotes a primitive $t$-th root of unity, 
then $z_t$ is an eigenvalue of $w$ of multiplicity at least $a(t)$.)

\noindent (4) Let $C_W(w)$ be the centralizer of $w$ in $W$.
Then:
$$ v_\ell\left (C_W(w) \right) \geq \sum_{t|d_i} v_\ell(d_i).$$

\noindent{\sl Remark}. The reader may wonder whether special elements exist for
a given pair $(t,\ell)$ (with $a(t)>0$ and $\ell\equiv 1\ (\mod t)$, of course). The
answer is ``no'' in general: if $R$ is of type $C_3$ and $t=4$, no element
of $W^*$ has both properties (1) and (2). Fortunately, the answer is ``yes'' for the exceptional types $G_2,\dots,E_8$, cf. \S 12.\medbreak

\noindent {\sl Example{\rm{ :}} the regular case}.
Suppose that $w \in W$ is \emph{regular of order} $t$ in the sense
of Springer\footnote{With a slight difference: Springer requires $t > 1$ and we don't; it is convenient to view $w = 1$ as a regular element of $W$.

Note that, if $t$ is given, there is a very simple criterion ensuring the existence of a regular element of $W$ of order $t$: the number of indices $i$ such that $d_i \equiv 0$ (mod $t$) should be equal to the number of $i$'s such that $d_i \equiv 2$ (mod $t$), cf. Lehrer-Springer \cite{LS99}, cor.5.5.}
 (\cite{Sp74}, bottom of p. 170 - see also \cite{BM97}, \S 3). This means that $w$ has an eigenvector $v$, with eigenvalue $z_t$, such that $v$ does not belong to any reflecting hyperplane. {\sl Then} $w$ {\sl is special} (for any $\ell$ with $\ell \equiv 1$ (mod $t$)). Indeed:
 
 (1) is obvious.
 
 (2) follows from the fact, proved in \cite{BM97}, \S 3, that $w$ has a lifting ${\mathbf w}$ in the braid group ${\mathbf B}_W$ with 
${\mathbf w}^t = \mbox{\boldmath$\pi$}$, where {\boldmath$\pi$} has an image $\pi$ in $W$ which belongs to $T_2 \cap C(G)$. In Bourbaki's notation ([LIE X], p.116) $\pi$ is the canonical element $z_G$ of the center of $G$.
 
 %this citation above  is not existing 
(3) is proved in \cite{Sp74}, th. 4.2.
 
(4) is proved in \cite{Sp74}, th. 4.2, in the stronger form $|C_W(w)|= \prod_{t|d_i} d_i$.
\vskip 0.3cm
\noindent{\sl Special cases}

$t = 1$. Here $w = 1$ and $w^* = \pi$ (one could also take $w^* = 1$).

$t =\!2$. Here $w = w_0 =$ longest element of $W$. When $-1$ belongs to $W$, one has $w_0 = -1$ and $w^*_0$ is {\sl central}  in $W^*$ (because ${\mathbf w}_0$ is central in ${\mathbf B}_W$, cf. \cite{BM97}, 1.2 and 3.4). In that case the inner automorphism of $G$ defined by $w^*_0$ is a ``Weyl-Chevalley involution": it acts on $T$ by $t \mapsto t^{-1}$.

\setcounter{lemma}{6}
\subsection{An auxiliary result}

\begin{lemma}
\label{lem7}
Suppose $w \in W$ is special of order $t$.
Then it is possible to choose a lifting $w^*$ of $w$ in $W^*$
which satisfies\vskip0.1cm 

{\em (2*)} $(w^*)^t \in T_2 \cap C(G)$

\noindent and 

{\em (4*)} $v_\ell \left ( C_{W^*}(w^*) \right ) \geq \sum_{t|d_i} v_\ell(d_i)$.
\end{lemma}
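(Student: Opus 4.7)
My plan is to show that once a lift satisfying (2*) is chosen, condition (4*) comes essentially for free: in fact, \emph{every} lift of $w$ in $W^*$ has centralizer of order at least $|C_W(w)|$. Since condition (2*) is just condition (2) of the definition of special, the required lift exists by assumption, and the whole problem reduces to proving the lower bound $|C_{W^*}(w^*)|\geq|C_W(w)|$.

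To prove this lower bound, I will introduce the preimage $X\subset W^*$ of $C_W(w)\subset W$, giving an extension $1\to T_2\to X\to C_W(w)\to 1$ with $|X|=|T_2|\cdot|C_W(w)|$, and let $X$ act by conjugation on the fiber $F$ of $W^*\to W$ above $w$. This action is well-defined: for $y^*\in X$ projecting to $y\in C_W(w)$, the conjugate $y^*w^*(y^*)^{-1}$ projects to $ywy^{-1}=w$ in $W$, hence lies in $F$. The fiber $F$ is a $T_2$-torsor, so $|F|=|T_2|$. The stabilizer of $w^*$ under this $X$-action is the full centralizer $C_{W^*}(w^*)$, because any element of $W^*$ commuting with $w^*$ automatically projects into $C_W(w)$, and so already lies in $X$.

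Orbit--stabilizer then gives $|C_{W^*}(w^*)|=|X|/|\mathrm{orbit}(w^*)|\geq|X|/|T_2|=|C_W(w)|$, hence $v_\ell(C_{W^*}(w^*))\geq v_\ell(C_W(w))$; combined with property (4) of the special element $w$, this yields
$$v_\ell\!\left(C_{W^*}(w^*)\right)\;\geq\;\sum_{t\mid d_i}v_\ell(d_i),$$
which is (4*).

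I do not anticipate any serious obstacle: the argument is essentially a one-line orbit count once the action of $X$ on $F$ is set up correctly. The only points to double-check are the well-definedness of the $X$-action on $F$ and the identification of the stabilizer of $w^*$ with $C_{W^*}(w^*)$; both follow immediately from the fact that any element of $W^*$ centralizing $w^*$ must project into $C_W(w)$. Observe finally that the argument makes no use of the parity of $\ell$, so it covers $\ell=2$ and $\ell\neq2$ uniformly.
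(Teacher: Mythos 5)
Your proof is correct, and it takes a genuinely different and simpler route than the paper's. Serre constructs a specific lift: working in the quotient extension $1 \to T_2/C_2 \to W^*/C_2 \to W \to 1$, he observes that it splits over $\langle w \rangle$ (by property (2)), hence over the $2$-Sylow of $P_w = P \times \langle w \rangle$ with $P$ an $\ell$-Sylow of $C_W(w)$ (this is where $\ell \neq 2$ enters), and then, because the kernel $T_2/C_2$ is a $2$-group, the splitting extends to all of $P_w$; this produces a lift commuting with an explicit copy $P'$ of $P$. You instead show by a single orbit--stabilizer count — the conjugation action of the preimage $X$ of $C_W(w)$ on the $T_2$-coset fiber $F$ above $w$, with $\mathrm{Stab}_X(w^*) = C_{W^*}(w^*)$ — that \emph{every} lift $w^*$ of $w$ already satisfies $|C_{W^*}(w^*)| \ge |X|/|F| = |C_W(w)|$; property (4) then gives (4*) at once, so the lift furnished by property (2) does the job. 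Your argument is shorter, avoids the extension-splitting machinery, and, as you note, makes no use of $\ell \neq 2$ (though Serre does not need that extra generality, since \S 13 treats $\ell = 2$ by other means). What the paper's proof buys that yours does not is a concrete subgroup of $C_{W^*}(w^*)$ mapping isomorphically onto an $\ell$-Sylow of $C_W(w)$; but for the stated inequality (4*), and for the application in prop.~5, your argument is fully sufficient.
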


\begin{proof}
Let $P$ be an $\ell$-Sylow of $C_W(w)$; 
the groups $P$ and $\langle w \rangle$ commute, 
and $P \cap \langle w \rangle=1$ since $w$ has order $t$
and $\ell$ is prime to $t$ (since $\ell\equiv 1 \, \mod t$).
Hence the group $P_w$ generated by $w$ and $P$ 
is the direct product $P \times \langle w \rangle$.
Since $\ell \neq 2$, its 2-Sylow subgroup is contained in $\langle w \rangle$.
Put $C_2 = T_2 \cap C(G)$. 
We have an exact sequence:
$$
\label{eq48}
1 \rightarrow T_2/C_2 \rightarrow W^*/C_2 \rightarrow W \rightarrow 1.
$$
By property (2) of $w$, this exact sequence splits over $\langle w \rangle$, 
hence over the 2-Sylow of $P_w$;
since the order of $T_2/C_2$ is a power of 2, this implies that it splits over $P_w$.
We thus get an element $w'$ of $W^*/C_2$, of order $t$, which lifts $w$, 
and centralizes a subgroup $P'$ of $W^*/C_2$ isomorphic to $P$.
We then choose for $w^*$ a representative of $w'$ in $W^*$; it has
property (2$^*$), moreover 
its centralizer contains the inverse image of $P'$, 
which is canonically isomorphic to $C_2 \times P'$.
By property (4) we have
$$v_\ell(P') = v_\ell(P) \geq \sum_{t|d_i} v_\ell(d_i).$$
This shows that $w^*$ has property (4*).
\end{proof}

\noindent{\sl Remark}.
In the case where $w$ is regular, one can do without lemma~\ref{lem7}.
Indeed the braid group construction of~\cite{BM97}
gives a lifting $w^*$ of $w$ having property (2$^*$) and such that the map
$C_{W^*}(w^*) \rightarrow C_W(w)$ is surjective.

\subsection{The main result}
\setcounter{proposition}{4}
\begin{proposition}
Suppose $W$ contains an element $w$ which is special 
with respect to $t$ and $\ell$. 
Then there exist an inner twist $G_\varphi$ of $G$ {\rm(cf. \S 10)}
and a finite $\ell$-subgroup $A$ of $G_\varphi(k)$ such that the pair
$(G_\varphi, A)$ is optimal in the sense of \S $7$.

{\em(In particular, th.9 is true for $(k,\ell,R)$.)}
\end{proposition}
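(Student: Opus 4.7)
The plan is to use the special element $w$, together with its lift $w^* \in W^*$ provided by Lemma 7, to build both the twist $\varphi$ and the subgroup $A$ from the same piece of data. First I would fix $w^* \in W^* \subset G(k)$ with $(w^*)^t \in T_2 \cap C(G)$ and $v_\ell(C_{W^*}(w^*)) \geq \sum_{t \mid d_i} v_\ell(d_i)$. Because $(w^*)^t$ lies in the center $C(G)$, the image $\bar w^*$ of $w^*$ in the adjoint group $G^{\mathrm{ad}}$ has order exactly $t$. Composing the cyclotomic character $\chi_{_{\ell^\infty}}:\Gamma_k \to \mathrm{Im}\,\chi_{_{\ell^\infty}}$ with the projection $\mathrm{Im}\,\chi_{_{\ell^\infty}} \twoheadrightarrow C_t$ and an isomorphism $C_t \xrightarrow{\sim} \langle \bar w^*\rangle \subset G^{\mathrm{ad}}(k)$ gives a continuous homomorphism $\varphi:\Gamma_k \to G^{\mathrm{ad}}(k)$. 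The twist $G_\varphi$ is the desired inner form.

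Next I would verify that, in $G_\varphi$, both $w^*$ and its centralizer $C_{W^*}(w^*)$ are $k$-rational. On $G_\varphi(k_s) = G(k_s)$ the twisted Galois action is $\gamma \ast g = \varphi(\gamma)\bigl(\gamma(g)\bigr)$. For any $g \in W^* \subset G(k)$ that commutes with $w^*$ one has $\gamma(g) = g$ and $\varphi(\gamma)(g) = (w^*)^a g (w^*)^{-a} = g$, so $g \in G_\varphi(k)$. In particular, an $\ell$-Sylow $P$ of $C_{W^*}(w^*)$ sits inside $G_\varphi(k)$, with $v_\ell(P) \geq \sum_{t \mid d_i} v_\ell(d_i)$ by property (4$^*$).

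For the torus contribution, let $S$ be the identity component of the kernel of $\Phi_t(w)$ acting on $T$. Property (3) of $w$ gives $\dim S \geq a(t)$, and $w$ acts on $S$ with characteristic polynomial a power of $\Phi_t$. Since $\varphi$ restricted to $S$ is precisely the composition $\Gamma_k \to C_t \to \mathrm{Aut}(S)$, the twisted torus $S_\varphi$ is a $k$-torus whose character group, as a $\Gamma_k$-module, factors through $C_t$ acting by a primitive $t$-th root of unity on each $\Phi_t$-block; this is exactly the situation of Theorem 4$^\prime$. Applying the construction in \S 5.3 block-by-block, $S_\varphi(k)$ contains a finite $\ell$-subgroup $B$ isomorphic to $(\Z/\ell^m\Z)^{a(t)}$, so that $v_\ell(B) = m \cdot a(t)$. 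Because $P$ centralizes $w^*$, it preserves $S$ and its twist $S_\varphi$, so $B$ can be chosen $P$-stable (or one simply takes the full $\ell^m$-torsion of $S_\varphi(k)$). Form $A = B \cdot P \subset G_\varphi(k)$; since $\ell$ is odd, $B \cap P \subset T_2 \cap P = \{1\}$ in the $\ell$-part, so
\[
v_\ell(A) = v_\ell(B) + v_\ell(P) \geq m\,a(t) + \sum_{t\mid d_i} v_\ell(d_i) = M(\ell,k,R).
\]
The reverse inequality is Theorem 6 applied to $G_\varphi$ (which is of inner type), so equality holds and $(G_\varphi, A)$ is optimal.

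The main obstacle is the fourth step: verifying that the twisted subtorus $S_\varphi$ really carries the $\ell^m$-torsion that Theorem 4$^\prime$ predicts. This requires identifying the $\Gamma_k$-action on $X(S)$ after twisting with the cyclotomic character times a primitive $t$-th root of unity on each $\Phi_t$-block, and then splitting $S_\varphi$ as a product of copies of the torus $R_{K/k}^{(1)}$ from \S 5.3. A secondary delicate point is to choose $B$ so that it is normalized by $P$; this is automatic once one takes $B$ to be characteristic in the $\ell$-torsion of $S_\varphi(k)$, but it has to be explicitly checked that $P$ indeed acts through $k$-automorphisms of $S_\varphi$, i.e.\ that the action of $P$ on $S$ commutes with the twisted Galois action, which follows from $[P,w^*] = 1$.
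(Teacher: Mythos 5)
Your overall plan matches the paper's: twist by $\varphi$ built from the inner automorphism $\sigma=\mathrm{int}(w^*)$ coming from Lemma~7, observe that an $\ell$-Sylow $P$ of $C_{W^*}(w^*)$ is $\sigma$-fixed and hence lands in $G_\varphi(k)$, produce an abelian $\ell$-group inside $T_\varphi(k)$ of $\ell$-valuation $\geq m\,a(t)$, and take $A$ to be their (semi-direct) product, using Theorem~6 for the reverse inequality. The first three steps of your write-up are fine and essentially identical to the paper.

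The issue is the one you flag yourself: the claim that $(\Z/\ell^m\Z)^{a(t)}$ sits inside $T_\varphi(k)$ is \emph{the} content of the proposition (the paper proves it as Lemma~8), and your proposal does not actually prove it. Moreover the route you sketch has a subtle flaw: you restrict to the subtorus $S=\ker\Phi_t(w)^\circ$ (note: $\dim S=a(t)\varphi(t)$, not ``$\geq a(t)$'' as you write) and want to split $S_\varphi$ ``block-by-block'' into copies of the torus of \S5.3. But $X(S)$ is a $\Z[\zeta_t]$-module of rank $a(t)$ which need not be \emph{free} over $\Z[\zeta_t]$ when the cyclotomic ring has nontrivial class group, so an integral decomposition of $S_\varphi$ into $a(t)$ such blocks is not available in general. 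The paper's Lemma~8 avoids this entirely by computing directly inside $T_{\ell^m}(K)\cong(\Z/\ell^m\Z)^r$: since $t\mid\ell-1$, the element $w$ is diagonalizable over $\Z/\ell^m\Z$, each primitive $t$-th root of unity occurs with multiplicity $c\geq a(t)$ among the eigenvalues, and on the $z_t^{-1}$-eigenspace the twisted Galois action $\gamma\mapsto\varphi(\gamma)\circ\gamma$ is trivial, giving a free rank-$c$ submodule of $E_m\subset T_\varphi(k)$. Passing to coefficients in $\Z/\ell^m\Z$ is exactly what makes the eigenspace decomposition unconditional; if you want to rescue your subtorus-plus-Theorem~$4'$ approach you must likewise work with $X(S)/\ell^m X(S)$ rather than with $X(S)$ integrally.

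One small further point: your inclusion ``$B\cap P\subset T_2\cap P$'' should be justified via $B\subset T_\varphi$, $P\subset W^*$ and $W^*\cap T=T_2$; then since $T_2$ is a $2$-group and $\ell$ is odd, $B\cap P=1$. The phrase ``in the $\ell$-part'' obscures this clean argument.
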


\begin{proof}
As in \S 9, we put $K = k(z_\ell)$, where $z_\ell$ is a root of unity
of order $\ell$.
Let $C_t = \Gal(K/k)$; it is a cyclic group of order $t$.

Choose $w^* \in W^*$ with the properties of lemma~\ref{lem7} and
let $\sigma$ be the inner automorphism of $G$ defined by $w^*$.
Since $\sigma$ has order $t$, there exists an injective homomorphism:
$$\varphi: C_t \rightarrow G^{\ad}(k) \subset \Aut_k(G)$$
which maps $C_t$ onto the subgroup $\langle \sigma \rangle$ of $\Aut_k(G)$
generated by $\sigma$.
As explained in \S 10, we may then define the \emph{$\varphi$-twist}
$G_\varphi$ of $G$, relatively to the Galois extension $K/k$. 
The group $G_\varphi$ is an inner form of $G$;
it has the same root system $R$.
It remains to construct a finite $\ell$-subgroup $A$ of $G_\varphi(k)$ 
such that $(G_\varphi,A)$ is optimal,
i.e. $v_\ell(A) = m a(t) +  \sum_{t|d_i} v_\ell(d_i)$, cf. (11.2.2).

We take for $A$ the semi-direct product $E_m\cdot P$, with 
$E_m \subset T_{\varphi}(k)$ and $P \subset N_\varphi(k)$, where $E_m$ and $P$
are defined as follows:\vskip.1cm

(1) $P$ is an $\ell$-Sylow of $C_{W^*}(w^*)$. 
By lemma~\ref{lem7} we have $v_\ell(P) \geq \sum_{t|d_i} v_\ell(d_i)$.
\newline
Note that the points of $P$ are fixed by $\sigma$.
Hence these points are rational over $k$ not only in the group $G$
but also in the group $G_\varphi$.

(2) $E_m$ is the subgroup of $T_\varphi(k)$ made up of the elements $x$ 
such that $x^{\ell^m}=1$.

\noindent It is clear that $P$ normalizes $E_m$, and that $P \cap E_m = 1$.

\begin{lemma}
\label{lem8}
The group $E_m$ contains a product of $a(t)$ copies of the group $\Z/\ell^m\Z$.
\end{lemma}

This implies that $v_\ell(E_m) \geq m a(t)$ and hence
$$v_\ell(A) = v_\ell(E_m)+v_\ell(P) \geq m a(t) +\sum_{t|d_i} v_\ell(d_i).$$
We thus get $v_\ell(A)\geq M$ and since $M$ is an upper bound for $v_\ell(A)$
we have $v_\ell(A)=M$.
\end{proof}

\noindent{\sl Proof of lemma 8}.
Consider first the subgroup $T_{\ell^m}$ of $T(k_s)$ made up of the elements  $x$ 
with $x^{\ell^m}=1$. 
Since $T$ is $k$-split,  and $K = k(z_\ell) = k(z_{\ell^m})$
(cf.  \S 4 and \S 9), 
the points of $T_{\ell^m}$ are rational over $K$.
If we write $T_{\ell^m}(K)$ additively, it becomes a free $\Z/\ell^m\Z$-module of rank $r$
and the action of a generator $s$ of $C_t$ is by $x \mapsto sx$,
where $s$ is identified with an element of order $t$ in $\Z_\ell^*$
(i.e. $s = $``$z_t$'' with our usual notation for roots of unity).
As for the action of $w^*$ (i.e. of $w$) on $T_{\ell^m}(K)$, 
it can be put in diagonal form 
since $w$ is of order $t$ and $t$ divides $\ell-1$;
its diagonal elements are $r$ elements $y_1, \ldots, y_r$ of $\Z/\ell^m\Z$,
with $y_i^t=1$.
Let $c$ be the largest integer such that $(\Phi_t)^c$ divides the 
characteristic polynomial of $w$.
By property (3) of  11.2, we have $c \geq a(t)$
(in fact, $c=a(t)$, by \cite{Sp74}, th.~3.4).
This implies that the family of the $y_i$'s contains $c$ times each primitive
$t$-th root of unity (viewed as element of $(\Z/\ell^m\Z)^*$).
In particular, there is a $\Z/\ell^m\Z$-submodule $X$ of $T_{\ell^m}(K)$
which is free of rank $c$
and on which $w$ acts by $x \mapsto z_t^{-1} x$.
If we twist $G$, $T$, $T_{\ell^m}$ by $\varphi$,
the new action of $C_t = \Gal(K/k)$ on $X$ is trivial
(cf. end of \S 10).
This means that $X$ is contained in $T_\varphi(k)$, hence in $E_m$,
which proves the lemma.

\hfill$\Box$
\vskip 0.3cm

Note the following consequence of proposition 4:

\begin{corollary}
If $W$ contains a $t$-regular element in the sense of \cite{Sp74},
then theorem 9 is true for $k,\ell,R$.
\end{corollary}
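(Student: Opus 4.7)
The plan is to observe that the entire content of the corollary is already present in the Example of \S 11.2, and to reorganize it as a clean verification that any $t$-regular element of $W$ (in Springer's sense) is special in the sense of \S 11.2; the corollary then follows immediately by applying proposition 4.

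So let $w \in W$ be $t$-regular, meaning $w$ has an eigenvector $v$ with eigenvalue $z_t$ not lying on any reflecting hyperplane. I would verify the four defining properties of a special element in turn. Property (1), that $w$ has order $t$, is part of the definition of regularity. For property (3), I cite Springer \cite{Sp74}, th.~4.2: if $w$ is $t$-regular then the multiplicity of $z_t$ as an eigenvalue of $w$ in the reflection representation equals $a(t)$, the number of $i$ with $t \mid d_i$; in particular $(\Phi_t)^{a(t)}$ divides the characteristic polynomial of $w$. For property (4), the same theorem of Springer gives the much sharper equality $|C_W(w)| = \prod_{t \mid d_i} d_i$, whence $v_\ell(C_W(w)) \geq \sum_{t \mid d_i} v_\ell(d_i)$.

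The one point that requires a genuine (non-tautological) input is property (2), i.e. lifting $w$ to an element $w^* \in W^*$ with $(w^*)^t \in T_2 \cap C(G)$. For this I invoke the braid-theoretic construction of Broué--Michel \cite{BM97}, \S 3: a $t$-regular $w$ admits a canonical lift $\mathbf{w}$ in the braid group $\mathbf{B}_W$ satisfying $\mathbf{w}^t = \boldsymbol{\pi}$, where $\boldsymbol{\pi}$ is the standard central element whose image $\pi$ in $W$ is trivial and whose natural lift to $W^*$ coincides with Bourbaki's central element $z_G \in T_2 \cap C(G)$ (cf. [LIE X], p.~116). Pushing $\mathbf{w}$ forward through $\mathbf{B}_W \twoheadrightarrow W^*$ supplies the required $w^* \in W^*$ with $(w^*)^t \in T_2 \cap C(G)$. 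I expect this to be the only step where some care is needed; everything else is routine citation.

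Once all four properties are checked, $w$ is special in the sense of \S 11.2, and proposition 4 directly produces an inner twist $G_\varphi$ of the split group $G$ and an optimal finite $\ell$-subgroup $A \subset G_\varphi(k)$, proving theorem 9 for the triple $(k,\ell,R)$. As noted in the remark following lemma 7, in the regular case one does not even need the auxiliary lemma 7, because the braid construction already yields a lift $w^*$ for which $C_{W^*}(w^*) \twoheadrightarrow C_W(w)$ is surjective, so property $(4^*)$ is automatic. Thus the main obstacle---verifying $(2)$---is absorbed into the braid-group machinery of \cite{BM97}, and the proof of the corollary reduces to citation.
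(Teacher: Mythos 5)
Your proof is correct and follows exactly the paper's own route: the paper treats the corollary as an immediate consequence of the Example in \S 11.2 (which verifies that a $t$-regular element is special, citing Springer and Brou\'e--Michel for the four conditions) combined with Proposition 5. The only thing I would tighten is the phrasing around $\boldsymbol{\pi}$: the map $\mathbf{B}_W \twoheadrightarrow W^*$ sends $\boldsymbol{\pi}$ to Bourbaki's $z_G \in T_2 \cap C(G)$ directly (it is the \emph{image}, not a ``lift''), but the substance of the argument is exactly right.
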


In the case $t=1$, no twist is necessary
(one takes $w=1$, $w^*=1$, cf. \S 11.2).

%--------------------          section 12          --------------------          

\vskip 0.5cm
\begin{center}
{\bf {\S 12. Proof of theorem~9 for exceptional groups}}
\end{center}
\setcounter{section}{12}
\setcounter{subsection}{0}

In each case we will show that the Weyl group contains an element $w$
which is special with respect to $t$ and $\ell$, so that we may apply 
prop.5.

\subsection{The case of $G_2$}

The degrees $d_i$ are $d_1=2, d_2=6$.
Since $t$ divides one of them, $t$ is a divisor of $6$, 
hence is regular (\cite{Sp74}, no. 5.4).
We may then apply prop.5.
\qed

Explicit description of $w,w^*$: if $c$ is a Coxeter element of $W$,
$c$ is of order $6$, and every lifting $c^*$ of $c$ in $W^*$ has order 6.
Hence, for any divisor $t$ of 6, we may take $w=c^{6/t}$ and $w^* = (c^*)^{6/t}$.

\subsection{The case of $F_4$}

The $d_i$'s are: 2, 6, 8, 12. 
All their divisors are regular (Springer, \emph{loc. cit.}).
One concludes as for $G_2$.
\qed

\subsection{The case of $E_6$}

The $d_i$'s are: 2, 5, 6, 8, 9, 12.
All their divisors are regular, except $t=5$.
In that case, choose any element $w \in W$ of order 5.
Since the kernel of $W^* \rightarrow W$  is a 2-group, 
$w$ can be lifted to an element $w^*$ of $W^*$ of order 5.
Conditions (1) and (2) of \S11.2 are obviously satisfied.
The same is true for condition (3), since $a(5)=1$ 
(only one of the $d_i$'s is divisible by 5),
and $w$ has at least one eigenvalue of order 5.
As for condition (4), it is trivial, since $\ell\equiv 1 \, (\mod 5)$ implies $\ell\geq 11$,
and $\ell$ does not divide any of the $d_i$'s, so that 
$\sum_{t|d_i} v_\ell(d_i)$ is 0.
Hence $w$ is special with respect to $(5,\ell)$.
\qed

\subsection{The case of $E_7$}

The $d_i$'s are: 2, 6, 8, 10, 12, 14, 18.
By~\cite{Sp74}, {\it loc.cit.} all their divisors are regular except 4, 5, 8, 10, 12.
If $t=4, 5, 8$ or 12, $t$ already occurs for $E_6$, 
with the same values of $a(t)$, namely 2, 1, 1 and 1.
Hence, we have $E_6$-special elements $w_4, w_5, w_8$ and $w_{12}$ in $W(E_6)$.
One then takes their images in $W(E_7)$ by the injective map $W(E_6) \rightarrow W(E_7)$,
and one checks that they are $E_7$-special (here again condition (4) is trivial since 
$v_\ell(d_i)=0$ for all the $\ell$'s with $\ell \equiv 1 \, (\mod t)$).

As for $t=10$, one takes $w = -w_5$, which makes sense since $-1\in W$.
The element $-1$ (usually denoted by $w_0$)  can be lifted to 
a central element $\varepsilon$ of $W^*$ with $\varepsilon^2 \in T_2 \cap C(G)$;
this is a general property of the case $-1\in W$ (which reflects the fact that $-1$ is $2$-regular, see end of \S 11.2).
Hence, if $w_5^*$ is a lifting of $w_5$ of order 5, $\varepsilon w_5^*$  is a lifting of
$w$ of order 10, and this shows that $w$ is special with respect to 10 and $\ell$.
\qed

\subsection{The case of $E_8$}

The $d_i$'s are: 2, 8, 12, 14, 18, 20, 24, 30.
By~\cite{Sp74}, {\it loc.cit.}, all their divisors are regular except 7, 9, 14, 18.

If $t=7$ (resp. 9), one chooses $w_7  \in W$ of order 7 (resp. $w_9 \in W$ of order 9).
Since 7 and 9 are odd, condition (2) of \S 11.2 is satisfied. 
The same is true for condition (3) because $a(t)=1$, and for condition (4) because $v_{\ell}(d_i)=0$ for all $i$.

If $t=14$ (resp. 18), one takes $w=-w_7$ (resp. $w=-w_9$), 
as we did for $E_7$.
\qed

\vskip 0.5cm
\begin{center}
{\bf {\S 13. Proof of theorems 10 and 11}}
\end{center}
\vskip 0.5cm
\setcounter{section}{13}
\setcounter{subsection}{0}

Here $\ell$ = 2. There are three cases (cf. \S 4.2): 

(a) $\Im\chi_{_{2^\infty}} = 1 + 2^m {\mathbf Z}_2 $ with $m \ge 2$.
In that case the M-bound is\linebreak $rm + v_2 (W)$, and th.10 asserts that an optimal pair $(G,A)$ exists for every type $R$.

(b) $\Im\chi_{_{2^\infty}} =\<-1 + 2^m\>$, with $m \ge 2$. The M-bound is $r_0m + r_1 + v_2(W)$, where $r_0$ (resp. $r_1$) is the number of $i$'s such that $d_i$ is odd (resp. even). Here, too, th.10 asserts that an optimal pair exists. 

(c) $\Im \chi_{_{2^\infty}} = \<-1,1+2^m\>$, with $m \ge 2$.

The M-bound is the same as in case (b), but th.11 does not claim that it can be met (i.e. that an optimal pair exists); it merely says that there is a pair $(G,A)$ with $v_2(A) = r_0m + v_2(W)$; such a pair is optimal only when $r_1 = 0$, i.e. when $-1$ belongs to the Weyl group.

\subsection{Proof of theorem 10 in case (a).} We take $G$ split and simply connected, and we choose a maximal split torus $T$. We use the notation $(N,W,W^*)$ of \S 11. Let $E$ be the $2$-torsion subgroup of $T(k)$. Since $T$ is isomorphic to the product of $r$ copies of $\G_m$, $E$ is isomorphic to a product of $m$ copies of $\Z/2^m\Z$, cf. \S 4.2. Hence $v_2(E) = rm$. The group $E$ is normalized by the Tits group $W^*$; we define $A$ as $A = E\!\cdot\!W^*$. The exact sequence
$$
1 \rightarrow E \rightarrow A \rightarrow W \rightarrow 1
$$
shows that $v_2(A) = rm + v_2(W)$. Hence $(G,A)$ is optimal.

\subsection{Cases (b) and (c).} As in \S11.4, we start with a split $G$, with a split maximal torus $T$. We define $N, W, W^*$ as usual. After choosing an order on the root system $R$, we may view $W$ as a Coxeter group; let $w_0$ be its longest element. It has order 2, and it is regular in the sense of Springer \cite{Sp74}. As explained in \S 11.2, this implies that there is a lifting $w^*_0$ of $w_0$ in $W^*$ which has the following two properties:

(i) its square belongs to the center of $G$;

(ii) the natural map $C_{W^*}\big(w^*_0\big) \rightarrow C_W(w_0)$ is surjective. 

\noindent Let $\sigma$ be the inner automorphism of $G$ defined by $w^*_0$. By (i), we have $\sigma^2 = 1$. Let $K = k(i)$ and let $\varphi$ be the homomorphism of Gal$(K/k)$ into Aut$_k(G)$ whose image is $\{1,\sigma\}$. Let us define $G_\varphi$ as the $\varphi$-{\sl twist} of $G$, in the sense defined in \S 10. Denote by $T_\varphi ,N_\varphi$ and $W^*_\varphi$ the $\varphi$-twists of $T,N$ and $W^*$. We have an exact sequence
$$
1 \rightarrow T_\varphi \rightarrow N_\varphi \rightarrow W_\varphi \rightarrow 1,$$ 
where $W_\varphi$ is the $\varphi$-twist of $W$. Note that $W_\varphi (k)$ is equal to the centralizer $C_W(w_0)$ of $w_0$ in $W$, and similarly $W^*_\varphi (k)$ is equal to $C_{_{W^*}}(w^*_0)$.

As in \S 13.1, let $E$ be the $2$-torsion subgroup of $T_\varphi (k)$. It is normalized by $C_{_{W^*}}(w^*_0)$. Define $A \subset G_\varphi (k)$ to be the group $A = E\cdot C_{_{W^*}}(w^*_0)$. By (ii), we have an exact sequence:
$$
1 \rightarrow E \rightarrow A \rightarrow C_W(w_0) \rightarrow 1\, ,
$$
which shows that $v_2(A) = v_2(E) + v_2\big(C_W(w_0)\big)$. The fact that $w_0$ is regular of order $2$ implies that 
$$|C_W(w_0)| = \prod_{2|d_i} d_i, $$hence $v_2\big(C_W(w_0)\big) = \sum v_2(d_i) = v_2 (W)$. This gives:
\begin{equation}
\label{13.2.1} v_2 (A) = v_2(E) + v_2(W).
\end{equation}
\begin{proposition}We have{\rm{ :}}
$$
\begin{array}{lll}
v_2(E) = r_1 + r_0m & {\mbox{\sl  in case}} \,\,{\rm (b)}\\
v_2(E) = r_0m &{\mbox{\sl in case}} \,\, {\rm (c)}. 
\end{array}\qquad\qquad\qquad\qquad\qquad\qquad\qquad\qquad
$$
\end{proposition}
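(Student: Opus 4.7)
The plan is to give an explicit product decomposition of the twisted torus $T_\varphi$, determined by the action of $w_0$ on the basis of simple coroots, and then to read off the $2$-primary torsion of each factor in cases (b) and (c). I would first describe $Y = X_{\ast}(T)$ as a $\Z[w_0]$-module. Since $G$ is split simply connected, the simple coroots $\alpha_1^\vee,\ldots,\alpha_r^\vee$ form a $\Z$-basis of $Y$, and the longest element $w_0$ acts by $\alpha_i^\vee \mapsto -\alpha_{\sigma(i)}^\vee$, where $\sigma$ is the opposition involution on the Dynkin diagram. Partition $\{1,\ldots,r\}$ into $\sigma$-orbits — $f$ fixed points and $s$ two-element orbits — and on each two-orbit $\{i,j\}$ replace $(\alpha_i^\vee,\alpha_j^\vee)$ by $(\alpha_i^\vee,-\alpha_j^\vee)$ so that $w_0$ simply swaps them; this yields a $\Z[w_0]$-module decomposition $Y = \Z_-^{f} \oplus \Z[w_0]^{s}$, where $\Z_-$ is the rank-one module on which $w_0$ acts as $-1$ and $\Z[w_0]$ is the regular module. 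A direct count of the $\pm 1$-eigenspaces of $w_0$ on $Y\otimes\Q$ (equivalently, Springer's eigenvalue formula $(-1)^{d_i-1}$) gives $s = r_1$ and $f = r_0 - r_1$.

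Transferring this decomposition to $T$ and twisting by $\varphi$ over $K = k(i)$ yields
\[
T_\varphi \;\cong\; \bigl(R^{(1)}_{K/k}\G_m\bigr)^{f} \times \bigl(R_{K/k}\G_m\bigr)^{s},
\]
where $R^{(1)}_{K/k}\G_m$ is the norm-one torus and $R_{K/k}\G_m$ the Weil restriction of $\G_m$. Indeed, from the description $T_\varphi(k) = \{x \in T(K) : \gamma(x) = w_0(x)\}$, each $\Z_-$-summand contributes the equation $\gamma(t) = t^{-1}$, i.e.\ $N_{K/k}(t) = 1$, while each $\Z[w_0]$-summand contributes $(\gamma(t_1),\gamma(t_2)) = (t_2,t_1)$, parametrized by $t_1 \in K^\ast$.

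The $2$-primary torsion is then computed factor by factor. For $R_{K/k}\G_m$ the $k$-points are $K^\ast$, so the $2$-primary torsion is $\mu_{2^\infty}(K)$, which by \S 4.2 has order $2^{m+1}$ in case (b) and $2^m$ in case (c). For $R^{(1)}_{K/k}\G_m$ it is $\ker(N_{K/k})\cap\mu_{2^\infty}(K)$: in case (b), $\gamma$ acts on $\mu_{2^{m+1}}$ by $\zeta \mapsto \zeta^{-1+2^m}$ (the reduction mod $2^{m+1}$ of the generator $-1+2^m$ of $\Im\chi_{2^\infty}$), so $N(\zeta) = \zeta\cdot\zeta^{-1+2^m} = \zeta^{2^m}$, whose kernel on $\mu_{2^{m+1}}$ is $\mu_{2^m}$, of order $2^m$; in case (c), $\gamma$ acts on $\mu_{2^m}$ as inversion, so $N\equiv 1$ and the kernel is all of $\mu_{2^m}$, again of order $2^m$. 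Summing the contributions: $v_2(E) = fm + s(m+1) = (f+s)m + s = r_0 m + r_1$ in case (b), and $v_2(E) = (f+s)m = r_0 m$ in case (c). The point requiring the most care is the $\Z[w_0]$-module decomposition of $Y$, specifically the absence of any trivial $\Z_+$ summand; this is automatic from the sign structure $w_0(\alpha_i^\vee) = -\alpha_{\sigma(i)}^\vee$ on the basis of simple coroots, which forces each simple coroot into a $\Z_-$-summand (if $\sigma$-fixed) or a $\Z[w_0]$-summand (if $\sigma$ pairs it with another).
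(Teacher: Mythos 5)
Your proof is correct and takes essentially the same approach as the paper: Serre proves the same decomposition (Lemma~9, $T_\varphi \cong (R_{K/k}\G_m)^{r_1}\times(\G_m^\sigma)^{r_0-r_1}$) by letting the opposition involution $\tau=-w_0$ permute the factors $T_j$ coming from the basis of fundamental weights of $X^*(T)$, whereas you use the dual basis of simple coroots in $X_*(T)$, but the $\Z[\langle w_0\rangle]$-module analysis and the factor-by-factor computation of $2$-primary torsion in cases (b) and (c) are the same.
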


In case (b), this shows that $(G_\varphi , A)$ is optimal, which proves th.10. Similarly, the fact that $v_2(A) = r_0m + v_2(W)$ proves th.11 in case (c).

% -------------------------------------------------subsection 13.3.----------------------------------------
\subsection{Proof of proposition 6.} We  need to describe explicitly the torus $T_\varphi$. To do so, let us first define the following two tori:

$\G^\sigma_m = 1$-dimensional torus deduced from $\G_m$ by Galois twist relatively to $K/k$.
Its group of $k$-points is $K^*_1 = {\rm Ker} \, N_{K/k} : K^* \rightarrow k^*$.

$R_{K/k}\G_m = 2$-dimensional torus deduced from $\G_m$ by Weil's restriction of scalars relatively to $K/k$. Its group of $k$-points is $K^*$.

\begin{lemma} The torus $T_\varphi$ is isomorphic to the product of $r_1$ copies of \linebreak$R_{K/k}\G_m$ and $r_0-r_1$ copies of $\G^\sigma_m$. 
\end{lemma}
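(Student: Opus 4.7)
The plan is to work at the level of cocharacter lattices. Since $T_\varphi$ is split by $K$ and $\Gal(K/k) = C_2$, the torus $T_\varphi$ is determined up to $k$-isomorphism by its cocharacter module $Y(T_\varphi)$ as a $\Z[C_2]$-module; by the construction of the Galois twist (cf.~\S10), this is simply $Y(T)$ with the non-trivial element $\gamma \in \Gal(K/k)$ acting by $\varphi(\gamma) = \sigma$, i.e.\ by $w_0$. On the target side, $Y(\G_m^\sigma) = \Z_-$ (the sign representation) and $Y(R_{K/k}\G_m) = \Z[C_2]$ (the regular representation), so the claim reduces to showing that $Y(T)$, equipped with the $w_0$-action, is isomorphic to $\Z[C_2]^{r_1} \oplus \Z_-^{\,r_0 - r_1}$ as a $C_2$-module.

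The heart of the argument uses that $G$ may be taken simply connected (cf.~Remark~(2) of \S7), so $Y(T)$ equals the coroot lattice $Q^\vee$, with $\Z$-basis the simple coroots $\alpha_1^\vee, \ldots, \alpha_r^\vee$. A standard property of the longest Weyl element gives $w_0(\alpha_i) = -\alpha_{\pi(i)}$ for a permutation $\pi$ of $\{1,\ldots,r\}$ with $\pi^2 = 1$ (the opposition involution), hence $w_0(\alpha_i^\vee) = -\alpha_{\pi(i)}^\vee$. Decomposing the index set into $\pi$-orbits yields an integral $w_0$-stable decomposition of $Y(T)$: a fixed point $i$ gives the rank-one summand $\Z\alpha_i^\vee$, on which $w_0$ acts as $-1$, i.e.~a copy of $\Z_-$; a two-cycle $\{i,j\}$ gives a rank-two summand in which, after the change of basis $e_1 = \alpha_i^\vee$, $e_2 = -\alpha_j^\vee$, the element $w_0$ simply swaps $e_1 \leftrightarrow e_2$, producing a copy of the regular representation $\Z[C_2]$. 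Hence $Y(T_\varphi) \cong \Z_-^{\,f} \oplus \Z[C_2]^{\,p}$, where $f$ and $p$ are the number of fixed points and of two-cycles of $\pi$, respectively.

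It remains to identify $f$ and $p$. Tensoring with $\Q$, the above decomposition shows that $w_0$ has $p$ eigenvalues equal to $+1$ and $f + p$ eigenvalues equal to $-1$. On the other hand, $w_0$ is $2$-regular in Springer's sense (as already used in \S11.2 and \S13.2), so by \cite{Sp74}, Thm.~4.2, the multiplicity of $-1$ as an eigenvalue of $w_0$ equals $a(2) = r_0$, and that of $+1$ equals $r - r_0 = r_1$. Comparing with the previous count gives $p = r_1$ and $f = r_0 - r_1$, whence $T_\varphi \cong (R_{K/k}\G_m)^{r_1} \times (\G_m^\sigma)^{r_0 - r_1}$.

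The main delicate point is the explicit signed-permutation formula $w_0(\alpha_i^\vee) = -\alpha_{\pi(i)}^\vee$ on a $\Z$-basis of $Y(T)$, which yields an integral — not merely rational — $C_2$-decomposition. Without simple connectedness of $G$ one would only get such a decomposition after inverting some integer $N$, which would not suffice to pin down the $k$-torus $T_\varphi$; with it, the remaining work is purely combinatorial on $\pi$-orbits, combined with Springer's eigenvalue count for regular elements.
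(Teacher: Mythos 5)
Your proof is correct and takes essentially the same approach as the paper, which works dually on the character side with the fundamental weight basis of $X(T)$ rather than your coroot basis of $Y(T)$; both exploit that simple connectedness of $G$ provides a $\Z$-basis on which the opposition involution $\tau = -w_0$ acts as a signed permutation, and both decompose according to the orbits of $\pi$ ($r_1$ two-cycles giving $R_{K/k}\G_m$, $r_0 - r_1$ fixed points giving $\G_m^\sigma$). The only minor difference is that the paper simply asserts the multiplicity-$r_0$ count for the eigenvalue $-1$ of $w_0$ as a known fact, whereas you route it through Springer's th.~4.2 on regular elements.
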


\noindent {\sl Proof.} The character group $X = $ Hom$(T,\G_m)$ is free of rank $r$, with basis the fundamental weights $\omega_1, \dots ,\omega_r$. This gives a decomposition of $T$ as 
$$T = T_1 \times T_2 \times \dots \times T_r\, ,$$
where each $T_i$ is canonically isomorphic to $\G_m$. Let $\tau = -w_0$ be the opposition involution of the root system $R$; it permutes $\omega_1, \dots ,\omega_r$ with $r_1$ orbits of order 2, and $r_0-r_1$ orbits of order 1. (This follows from the fact that $-1$ is an eigenvalue of $w_0$ of multiplicity $r_0$.) The involution $\tau$ permutes the tori $T_j$. If an index $j$ is fixed by $\tau$, then $w_0$ acts on $T_j$ by $t \mapsto t^{-1}$ and the twisted torus $(T_j)_\varphi$ is isomorphic to $\G_m^\sigma$; similarly, if $\tau$ permutes $j$ and $j^\prime$, the torus $(T_j\times T_{j^\prime})_\varphi$ is isomorphic to $R_{K/k}\G_m$. This proves lemma 9. \hfill$\Box$

\vskip 0.3cm
\noindent{\sl End of the proof of prop.6}. The $2$-torsion subgroup of 
${\bf G}_m^\sigma(k) = K^*_1$ is cyclic of order $2^m$; the 2-torsion subgroup of $R_{K/k}\G_m(k) = K^*$ is cyclic of order $2^{m+1}$ in case (b) and of order $2^m$ in case (c). We get what we wanted, namely:

case (b): $v_2(E) = r_1 (m+1) + (r_0-r_1)m = r_0m+r_1$

case (c): $v_2(E) = r_1m + (r_0-r_1)m = r_0m.$

\noindent This completes the proof of prop.6, and hence of th.10 and th.11.\hfill $\Box$

%-----------------------------------------------subsection 13.4---------------------------------------------------

\subsection{Remarks on the non simply connected case.} The proof above could have been given without  assuming that the split group $G$ is simply connected. The main difference is in lemma 9: in the general case, the torus $T_\varphi$ is a product of three factors (instead of two):
$$
T_\varphi = (\G_m)^\alpha \times (\G_m^\sigma)^\beta \times (R_{K/k}\G_m)^\gamma\, ,
$$
where $\alpha , \beta , \gamma$ are integers, with $\beta + \gamma = r_0$ and $\alpha + \gamma = r_1$. This gives the following formulae for $v_2(E):$

case (b) :  $v_2(E) = \alpha + \beta m + \gamma (m+1) = r_1 + r_0m$

case (c)  :  $v_2(E) = \alpha + \beta m + \gamma m = \alpha + r_0m\, .$

\noindent In case (b) one finds the same value for $v_2(A)$, namely the 
M-bound. \linebreak In case (c) one finds a result which is intermediate between the M-bound $r_1 + r_0m + v_2(W)$ and the value $r_0m + v_2(W)$ given by th.11.\medskip

\noindent{\sl Examples} (assuming we are in case (c)).

- {\sl Type} $A_r$, $r$ {\sl even}. One finds that $\alpha$ is always 0, so that one does not gain anything by choosing non simply connected groups. Indeed, in that case, it is possible to prove, by a variant of Schur's method, that the value of $v_2(A)$ given by th.11 is best possible.

- {\sl Type} $A_r$, $r$ {\sl odd} $\ge 3$. Here $r_1 = (r-1)/2$. One finds that $\alpha = 0$ if $r \equiv 1$ (mod 4), but that $\alpha$ can be equal to $1$ if $r \equiv 3$ (mod 4). When $r = 3$, we thus get $\alpha = r_1$; this shows that the M-bound is best possible for type $A_3$.

- {\sl Type} $D_r$, $r$ {\sl odd}. Here $r_1 = 1$, and if one chooses $G$ neither simply connected nor adjoint, one has $\alpha = 1$. This means that the orthogonal group $\SO_{2r}$ has an inner $k$-form which contains an optimal $A$. (Note the case $r = 3$, where $D_3 = A_3$.) 

- {\sl Type} $E_6$. Here $r_1 = 2$, and one has $\alpha = 0$ both for the simply connected group and for the adjoint group (indeed, $\alpha$ is 0 for every adjoint group).\linebreak I do not know whether the bound of th.11 is best possible in this case. 

\vskip 0.5cm
\begin{center}
{\bf {\S 14. The case $m = \infty$}}
\end{center}
\label{sec14}
\setcounter{section}{14}
\setcounter{subsection}{0}

\subsection{Statements.} We keep the notation $(G,R,W,d_i,\ell,t,m)$ of \S 4 and \S 6; as before, we assume that $G$ is of inner type. 

We consider the case $m = \infty$, i.e. the case where {\sl the image of} 
$\chi_{_{\ell^\infty}}$ {\sl is finite}; that image is then cyclic of order $t$, cf. \S 4.

Let $a(t)$ be the number of $i$'s such that $d_i \equiv 0$ (mod $t$). If $a(t) = 0$, then $G(k)$ is $\ell$-torsion free, cf. \S 6.2, cor.to prop. 4. In what follows, we shall thus assume that $a(t) \ge 1$. In that case, $G(k)$ may contain infinite $\ell$-subgroups (we say that a group is an $\ell$-{\sl group} if every element of that group has order a power of $\ell$). The following two theorems show that $a(t)$ controls the size of such a subgroup:

\begin{mytheorem12} Let  $A$ be an $\ell$-subgroup of $G(k)$. Then $A$ contains a subgroup of finite index isomorphic to the $\ell$-group $(\Q_\ell/\Z_\ell)^a$, with $a \le a(t).$ 
\end{mytheorem12}

(Note that $\Q_\ell/\Z_\ell$ is the union of an increasing sequence of cyclic groups of order $\ell, \ell^2$, \dots ; it is the analogue of $\Z/\ell^m\Z$ for $m = \infty$.)\vskip.1cm

The bound $a \le a(t)$ of th.12. is optimal. More precisely:

\begin{mytheorem13} There exist a semisimple group $G$ of inner type, with root system $R$, and an $\ell$-subgroup $A$ of $G(k)$, such that $A$ is isomorphic to the product of $a(t)$ copies of $\Q_\ell/\Z_\ell$.
\end{mytheorem13}

\subsection{Proof of theorem 12} We need a few lemmas:

\begin{lemma} Any finitely generated $\ell$-subgroup of $G(k)$ is finite.
\end{lemma}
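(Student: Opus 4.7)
The plan is to derive this immediately from the Malcev--Selberg theorem $(^*)$ recalled in \S1.2, together with its positive-characteristic variant stated in the Remark following it. The key point is that a finitely generated linear group is ``almost torsion-free'' (in a sense depending on $\car(k)$), while the standing hypothesis that $\Gamma$ is an $\ell$-group forces every element to be torsion of $\ell$-power order -- and these two conditions are essentially incompatible except in finite groups.

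Concretely, I would first fix a faithful $k$-linear representation $G \hookrightarrow \GL_n$, so that any subgroup $\Gamma \subset G(k)$ embeds in $\GL_n(k)$. Let $\Gamma$ be a finitely generated $\ell$-subgroup. Applying $(^*)$ of \S1.2 to $\Gamma$ (and its characteristic-$p$ variant when $\car(k) = p > 0$) produces a finite-index subgroup $\Gamma_1 \subset \Gamma$ which is torsion-free if $\car(k) = 0$, and ``$p'$-torsion-free'' (i.e.\ its torsion is a $p$-group) if $\car(k) = p > 0$.

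Now I would invoke the blanket hypothesis of Lecture~II that $\ell \neq \car(k)$. Every element of $\Gamma_1$ has $\ell$-power order (since $\Gamma_1 \subset \Gamma$ and $\Gamma$ is an $\ell$-group); in characteristic $0$ this contradicts torsion-freeness unless the element is trivial, while in characteristic $p \neq \ell$ no nontrivial element of $p$-power order can simultaneously have $\ell$-power order. In both cases $\Gamma_1 = \{1\}$, so $\Gamma$ is itself finite, having a trivial subgroup of finite index.

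There is no real obstacle: the lemma is a clean corollary of the Malcev--Selberg machinery set up in \S1.2, and the only point requiring a moment's thought is verifying that the hypothesis ``$\Gamma$ finitely generated and linear'' is preserved when we pass from $G(k)$ to $\GL_n(k)$, which is automatic from any faithful embedding. The characteristic hypothesis $\ell \neq \car(k)$ is what makes the $p'$-torsion-free variant sufficient to conclude in positive characteristic.
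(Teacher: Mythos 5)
Your proposal is correct and is essentially identical to the paper's own proof: Serre also embeds $B$ in $\GL_n(k)$, cites the Malcev--Selberg result from \S 1.2 (with its positive-characteristic variant) to obtain a finite-index subgroup that is torsion-free or has only $p$-torsion, and then uses $\ell \neq \car(k)$ to conclude that this subgroup is trivial.
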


\noindent{\sl Proof.} Let $B$ be a finitely generated $\ell$-subgroup of $G(k)$. We may embed $B$ in $\GL_n(k)$ for $n$ large enough. By a known result (see \S 1.2) there exists a subgroup $B^\prime$ of $B$, of finite index, which is torsion-free  if $\car(k)=0$, and has only $p$-torsion if char$(k)$ = $p$. Since $B^\prime$ is an $\ell$-group, this means that $B^\prime = 1$, hence $B$ is finite.\hfill$\Box$

\begin{lemma}
There exists a maximal $k$-torus of $G$ which is normalized by $A$. {\rm (Recall that $A$ is an $\ell$-subgroup of $G(k).)$}
\end{lemma}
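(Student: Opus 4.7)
The idea is to reduce the infinite $\ell$-subgroup $A$ to a finite one via a Noetherian/compactness argument, and then invoke Theorem~$3''$ of \S 3.3, which handles the finite supersolvable case over an arbitrary field $k$.

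Let $\mathcal{T}$ denote the variety of maximal tori of $G$ (a smooth $k$-variety, identified with $G/N$ for a fixed maximal $k$-torus with normalizer $N$). For any element $s \in G(k)$, the subset
\[
Y_s \,=\, \{\, T' \in \mathcal{T} : sT's^{-1} = T' \,\}
\]
is closed in $\mathcal{T}$: pulled back to $G$, it is the (right $N$-saturated) closed condition $sgNg^{-1}s^{-1} = gNg^{-1}$. For a subset $S \subseteq A$, put $Y_S = \bigcap_{s \in S} Y_s$; this is closed, and $S \subseteq S'$ implies $Y_{S'} \subseteq Y_S$. Clearly $Y_S = Y_{\langle S \rangle}$, and the set of maximal $k$-tori of $G$ normalized by $A$ is exactly $(\bigcap_{S} Y_S)(k)$, the intersection ranging over finite $S \subseteq A$.

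Next, I use Noetherianness: the variety $\mathcal{T}$ is Noetherian, so the decreasing family $\{Y_S\}_{S \subseteq A \text{ finite}}$ stabilizes. Hence there is a finite $S_0 \subseteq A$ with
\[
Y_{S_0} \,=\, \bigcap_{S \subseteq A \text{ finite}} Y_S.
\]
The subgroup $B = \langle S_0 \rangle$ is a finitely generated $\ell$-subgroup of $G(k)$, so by Lemma~10 it is finite. Being a finite $\ell$-group, $B$ is nilpotent, hence supersolvable; its order is a power of $\ell$, which is prime to $\car(k)$ by standing hypothesis. Theorem~$3''$ of \S 3.3, applied to the action of $B$ on $G$ by inner automorphisms, then yields a maximal $k$-torus $T$ of $G$ stable under $B$, i.e., a point of $Y_{S_0}(k)$. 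By the stabilization above, such a $T$ lies in $Y_S(k)$ for every finite $S \subseteq A$, so $T$ is normalized by all of $A$.

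\textbf{Main obstacle.} The only nontrivial point is the passage from the infinite group $A$ to a finite subgroup $B$ which captures the normalization condition; this is exactly where Lemma~10 (finitely generated $\ell$-subgroups are finite) and the Noetherianness of $\mathcal{T}$ combine. Everything else is a direct application of Theorem~$3''$.
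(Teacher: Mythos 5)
Your proof is correct and follows essentially the same route as the paper: you identify the variety $X = G/N$ of maximal tori, note that the fixed locus of any finite subset is closed, use Noetherianness to find a finite $S_0$ whose fixed locus is minimal (hence equal to the full intersection), invoke Lemma 10 to conclude $B = \langle S_0\rangle$ is a finite $\ell$-group, and finish with Theorem $3''$. The only cosmetic difference is that the paper indexes by the directed family of finite subgroups of $A$ (using Lemma 10 up front to see this family is closed under joins) rather than by finite subsets, and cites the corollary to Theorem $3''$ rather than the theorem itself.
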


\noindent {\sl Proof.} Let $F$ be the set of all finite subgroups of $A$, ordered by inclusion. Lemma 10 implies that, if $B_1$ and $B_2$ belong to $F$, so does $\<B_1, B_2\>$. Let $X$ be the $k$-variety parametrizing the maximal tori of $G$; it is a homogeneous space of $G$. If $B \in F$, let $X^B$ be the subvariety of $X$ fixed by $B$; a point of $X^B$ corresponds to a maximal torus of $G$ normalized by $B$. By the noetherian property of the scheme $X$, one may choose $B_0 \in F$ such that $X^{^{B_0}}$ is minimal among the $X^B$\ \!'s. If $B \in F$, then $X^{\<B_0,B\>}$ is contained in $X^{B_0}$, hence equal to $X^{B_0}$. This shows that $X^{B_0}$ is contained in all the \linebreak $X^B$\ 's, i.e. that every maximal torus which is normalized by $B_0$ is normalized by all the $B$'s, hence by $A$. By the corollary to th.3$^{\prime\prime}$ of \S 3.3 (applied to the finite $\ell$-group $B_0$) there exists such a torus which is defined over $k$. \hfill $\Box$

\begin{lemma}
Let $u\in \M_r(\Z_\ell)$ be an $r \times r$ matrix with coefficients in $\Z_\ell$, which we view as an endomorphism of $(\Q_\ell/\Z_\ell)^r$. Then {\rm Ker}$(u)$ has a subgroup of finite index isomorphic to the product of $r- {\rm rank}(u)$ copies of $\Q_\ell/\Z_\ell$.
\end{lemma}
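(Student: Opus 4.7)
The plan is to reduce $u$ to a diagonal form using the elementary divisors theorem over the principal ideal domain $\Z_\ell$ (Smith normal form), then read off the kernel directly.

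First, I would choose invertible matrices $P, Q \in \GL_r(\Z_\ell)$ such that $D = PuQ$ is a diagonal matrix with diagonal entries $\ell^{a_1}, \dots, \ell^{a_s}, 0, \dots, 0$, where $a_1, \dots, a_s$ are non-negative integers and $s$ is the number of nonzero elementary divisors. Tensoring with $\Q_\ell$, we see that $s$ is precisely the rank of $u$ as a $\Q_\ell$-linear endomorphism, i.e. $s = \rank(u)$.

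Next, since $P$ and $Q$ lie in $\GL_r(\Z_\ell)$, they induce automorphisms of the $\Z_\ell$-module $(\Q_\ell/\Z_\ell)^r$. Hence $\ker(u)$ is isomorphic to $\ker(D)$ as an abstract abelian group. Because $D$ is diagonal, its kernel splits as a direct sum indexed by the diagonal entries: multiplication by $\ell^{a_i}$ on $\Q_\ell/\Z_\ell$ has kernel cyclic of order $\ell^{a_i}$, while multiplication by $0$ has kernel equal to the whole factor $\Q_\ell/\Z_\ell$. Therefore
\[
\ker(u) \;\cong\; \bigoplus_{i=1}^{s} \Z/\ell^{a_i}\Z \;\oplus\; (\Q_\ell/\Z_\ell)^{r-s}.
\]
The subgroup corresponding to the second summand is isomorphic to $(\Q_\ell/\Z_\ell)^{r-\rank(u)}$, and its index in $\ker(u)$ equals $\prod_{i=1}^s \ell^{a_i}$, which is finite. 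This gives the desired subgroup.

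There is no real obstacle here: the argument is essentially the structure theorem for finitely presented torsion modules over the DVR $\Z_\ell$, combined with the observation that $\Z_\ell$-automorphisms of $\Z_\ell^{\,r}$ transfer to automorphisms of $(\Q_\ell/\Z_\ell)^r$ via the duality $\Q_\ell/\Z_\ell = \Q_\ell/\Z_\ell$. The only mild point to check is that "rank" in the statement means the generic rank, i.e. $\dim_{\Q_\ell} \Im(u \otimes \Q_\ell)$, which is exactly the number of nonzero elementary divisors and hence matches the count $r-s$ of copies of $\Q_\ell/\Z_\ell$ produced above.
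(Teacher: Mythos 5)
Your proof is correct and follows exactly the approach the paper indicates: reduce $u$ to diagonal form via Smith normal form over $\Z_\ell$ (the paper says ``by reduction to the case where $u$ is a diagonal matrix,'' referring to its Lemma 4) and read off the kernel componentwise. You have simply spelled out the details that Serre leaves to the reader.
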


In other words, the ``corank" of Ker$(u)$ is equal to $r - {\rm rank}(u)$.
\vskip 0.3cm
\noindent{\sl Proof}. Same as that of lemma 4 of \S 5.2: by reduction to the case where $u$ is a diagonal matrix.\hfill$\Box$

\begin{lemma}
Let $z_t$ be a primitive $t$-th root of unity, and let $w$ be an element of $W$. The multiplicity of $z_t$ as an eigenvalue of $w$ is $\le a(t)$.
\end{lemma}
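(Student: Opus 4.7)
\emph{Plan.} Let $V$ denote the reflection representation of $W$ over $\C$, and let $c$ be the multiplicity of $z_t$ as an eigenvalue of $w$ acting on $V$, i.e.\ $c = \dim V_w$ with $V_w := \ker(w - z_t\cdot\mathrm{id}_V)$. The goal is to show $c \le a(t)$.

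The central observation is that every homogeneous $W$-invariant polynomial $P$ of degree $d$ vanishes identically on $V_w$ unless $t \mid d$. Indeed, for $v \in V_w$ one has $P(v) = P(wv) = P(z_t v) = z_t^{d}P(v)$, and $z_t^d \neq 1$ when $t\nmid d$. Applied to the basic invariants $P_1,\dots,P_r$ introduced in \S 6.1, this says that the $r-a(t)$ invariants $P_i$ with $t \nmid d_i$ restrict to the zero function on $V_w$, while only the $a(t)$ remaining basic invariants can be non-zero on $V_w$.

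To conclude, I appeal to the Shephard--Todd--Chevalley fact that $\C[V]^W = \C[P_1,\dots,P_r]$, which implies (via the Hilbert Nullstellensatz applied to the coinvariant algebra) that the simultaneous zero set of $P_1,\dots,P_r$ on $V$ is reduced to $\{0\}$. Intersecting with the linear subspace $V_w$ gives $V_w \cap \{P_1=\cdots=P_r=0\} = \{0\}$, and by the preceding paragraph this intersection already equals the common zero locus of just the $a(t)$ polynomials $\{P_i|_{V_w} : t\mid d_i\}$. We therefore have $a(t)$ regular functions on the affine space $V_w \cong \C^c$ whose common zero locus is a single point; Krull's height theorem (equivalently, dimension theory in a polynomial ring) then forces $a(t) \ge c$, which is exactly the claim.

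I do not anticipate any serious obstacle: the argument is short once one grants the two classical inputs (Shephard--Todd, already recalled in \S 6.1, and Krull's height theorem). The only minor point worth flagging is that the statement depends only on $W$ and the integer $t$, so one is free to extend scalars and work over $\C$ throughout, where the eigenspace $V_w$ and the vanishing arguments are unambiguous.
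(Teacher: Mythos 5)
Your proof is correct, and it takes a genuinely different route from the one the paper relies on. The paper simply cites Springer (\cite{Sp74}, th.~3.4(i)) and points to the divisibility $\det(T-w)\mid\prod_i(T^{d_i}-1)$, from which the bound is immediate once one observes that $z_t$ is a root of $T^{d_i}-1$ exactly when $t\mid d_i$, with multiplicity one in each factor. You instead bypass the divisibility statement altogether: you observe that a $W$-invariant homogeneous polynomial of degree $d$ must vanish on the eigenspace $V_w = \ker(w-z_t)$ when $t\nmid d$, so among the basic invariants $P_1,\dots,P_r$ only the $a(t)$ of them with $t\mid d_i$ can be non-zero on $V_w$; since $\{P_1=\cdots=P_r=0\}=\{0\}$ (the coinvariant algebra is finite-dimensional and the $P_i$ are homogeneous of positive degree), these $a(t)$ restrictions already cut out the origin in $V_w\cong\C^c$, and Krull's height theorem gives $c\le a(t)$. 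Both arguments rest on Shephard--Todd--Chevalley, and indeed the divisibility result the paper invokes is classically proved from the same circle of ideas (Molien's formula / the Poincar\'e series identity). What your argument buys is self-containedness and a purely dimension-theoretic reading of the bound; what it gives up is the stronger packaged statement about the full characteristic polynomial, which the paper uses elsewhere. One small remark: the appeal to the Nullstellensatz can be replaced by the simpler observation that a graded ideal generated in positive degree cuts out a cone, and a finite cone is $\{0\}$.
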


\noindent{\sl Proof.} See \cite{Sp74}, th.3.4(i) where it is deduced from the fact that the polynomial $\det(t-w)$ divides $\prod_i (t^{d_i}-1).$\hfill$\Box$

\begin{lemma}
Let $T$ be a maximal $k$-torus of $G$, and let $T(k)_\ell$ be the $\ell$-torsion subgroup of $T(k)$. We have  {\rm corank} $T(k)_\ell \le a(t)$.
\end{lemma}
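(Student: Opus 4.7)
The plan is to adapt the eigenvalue argument from the proof of th.4 (\S 5.2) to the maximal $k$-torus $T$ of our inner-type $G$, and then invoke the two preceding lemmas.

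First I would identify the $\ell^\infty$-torsion subgroup of $T(k_s)$ with $Y(T)\otimes_{\Z}\Q_\ell/\Z_\ell\simeq(\Q_\ell/\Z_\ell)^r$, where $Y(T)=\Hom_{k_s}(\G_m,T)$ is the free cocharacter lattice of rank $r$. Exactly as in \S 5.2, the action of $g\in\Gamma_k$ on this module is multiplication by $\rho(g)\chi(g)$, where $\rho:\Gamma_k\rightarrow\Aut(Y(T))\cong\GL_r(\Z)$ is the natural representation and $\chi=\chi_{_{\ell^\infty}}$. The group $T(k)_\ell$ consists of the Galois-fixed elements, hence for every $g\in\Gamma_k$ it lies in the kernel of $u_g:=\rho(g)\chi(g)-1\in\M_r(\Z_\ell)$. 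Lemma 12 then gives
$$
\text{corank}\,T(k)_\ell\,\le\,\text{corank}\,\ker(u_g)\,=\,r-\rank(u_g).
$$

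Since $\chi(g)\in\Z_\ell^*$, one has $\rank(u_g)=\rank(\rho(g)-\chi(g)^{-1})$, and since $\rho(g)$ has finite order in $\GL_r(\Z)$ it is diagonalizable over $\overline{\Q}_\ell$; this rank therefore equals $r$ minus the multiplicity of $\chi(g)^{-1}$ as an eigenvalue of $\rho(g)$. Because $m=\infty$, the group $\Im\chi$ is cyclic of order exactly $t$, so I may choose $g$ with $\chi(g)$ of order $t$; then $\chi(g)^{-1}$ is a primitive $t$-th root of unity in $\Z_\ell^*$.

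The decisive input is that $\rho(g)$ lies in the Weyl group $W\subset\Aut(Y(T))$. This is where the inner-type hypothesis enters: the Galois action on $Y(T)$ for any maximal $k$-torus of a reductive group factors through $W\rtimes\Aut({\rm Dyn}(R))$, and by the definition of \emph{inner type} recalled in \S 6.1 its projection to $\Aut({\rm Dyn}(R))$ is trivial. Lemma 13 applied to $w=\rho(g)\in W$ with the primitive $t$-th root of unity $\chi(g)^{-1}$ then bounds the multiplicity in question by $a(t)$. Combining yields $\text{corank}\,T(k)_\ell\le a(t)$, as required.

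The only step that goes beyond a straightforward translation of \S 5.2 is the claim that, for $G$ of inner type, $\rho(g)$ lies in $W$; this is the main obstacle, but it is a well-known consequence of the definition of inner type. All the other steps are direct applications of lemmas 12 and 13 together with the choice of an element $g\in\Gamma_k$ whose cyclotomic character attains its maximal order $t$.
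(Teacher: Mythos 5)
Your proof is correct and follows essentially the same route as the paper: pass to cocharacters, observe that the inner-type hypothesis forces $\rho(\Gamma_k)\subset W$, pick $g$ with $\chi(g)$ of exact order $t$, and combine lemma 12 (corank of a kernel) with lemma 13 (eigenvalue multiplicity in $W$). The only cosmetic difference is that you spell out the diagonalizability of $\rho(g)$ and the passage from $\rank(u_g)$ to an eigenvalue multiplicity, which the paper leaves implicit.
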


As above, the ``corank" of a commutative $\ell$-group is the largest $n$ such that the group contains the product of $n$ copies of $\Q_\ell/\Z_\ell$.

\vskip 0.3cm
\noindent{\sl Proof.} As in \S 5.2, let $Y(T) = \Hom_{k_s} (\G_m,T)$ be the group of cocharacters of $T$.  The action of the Galois group $\Gamma_k$ on $Y(T)$ gives a homomorphism
$$
\rho : \Gamma_k \rightarrow \Aut Y(T) \simeq \GL_r (\Z)
$$
and the image of $\rho$ is contained in the Weyl group $W$ (this is still another way of saying that $G$ is of inner type). The group $\Gamma_k$ acts on $T(k_s)_\ell \simeq (\Q_\ell/\Z_\ell)^r$ by $\rho \otimes \chi$, where $\chi = \chi_{_{\ell^\infty}}$. Let us now choose $g \in \Gamma_k$ such that $\chi(g) = z_t^{-1}$, where $z_t$ is an element of order $t$ of $\Z^*_\ell$, and let $w = \rho (g)$. The element $g$ acts on $T(k_s)_\ell$ by $wz^{-1}_t$. Let $T_g$ be the kernel of $g-1$ on $T(k_s)_\ell$. By lemma 12, we have corank $(T_g) = r -$ rank$(g-1)$, which is equal to the multiplicity of $z_t$ as an eigenvalue of $w$; using lemma 13, we get corank$(T_g) \le a(t)$, and since $T(k)_\ell$ is contained in $T_g$, we have corank$(T(k)_\ell) \le a(t)$. \hfill$\Box$

\vskip 0.3cm
\noindent{\sl End of the proof of th.12}. By lemma 11, there is a maximal $k$-torus $T$ of $G$ which is normalized by $A$. Let $A^\circ = A \cap T(k)$. Then $A^\circ$ is an abelian subgroup of $A$ of finite index. Since $A^\circ$ is contained in $T(k)_\ell$, lemma 14 shows that $A^\circ$ is isomorphic to the product of a finite group with a product of at most $a(t)$ copies of $\Q_\ell/\Z_\ell$. \hfill$\Box$

\subsection{Proof of  theorem 13.} We follow the same strategy as for theorem 9, 10 and 11. There are three cases:

\subsubsection{\rm{\bf Classical groups} $(\ell \not= 2)$} We change slightly the definitions of \S 9.1: we define $A_N$ as the subgroup of $\GL_N(K)$, with $K = k(z_\ell)$, made up of the diagonal matrices of order a power of $\ell$; it is isomorphic to $(\Q_\ell/\Z_\ell)^N$. 

For any given $n \ge 2$, we put $N = [n/t]$ and we get embeddings 
$$
A_N \rightarrow \GL_N(K) \rightarrow \GL_{Nt}(k) \rightarrow \GL_n (k).
$$
If $t>1$, one checks that the $k$-determinant of every element of $A_N$ is $1$; we thus get an embedding $A_N \rightarrow \SL_n(k)$ which has the required properties since $N = a(t)$ in that case. When $t = 1$, we replace $A_N$ by the subgroup of its elements of $k$-determinant 1, and we also get what we want. This solves the case of type $A_r$. Types $B_r$, $C_r$ and $D_r$ are then treated by the methods of \S 9.3 and \S 9.4.

\subsubsection{{\rm\bf Exceptional groups} $(\ell \not= 2)$} One replaces prop.5 of \S 11.4 by a statement giving the existence of $A \subset G_\varphi(k)$ with $A\simeq(\Q_\ell/\Z_\ell)^{a(t)}$. The proof is the same. One then proceeds as in \S 12.

\subsubsection{{\rm\bf The case} $\ell = 2$} Same method as in \S 13.\hfill$\Box$

%\end{proof}

\newpage
\markright{ }

\vfill
 \noindent 
{\small \noindent J.-P. Serre\\
\noindent Coll\`{e}ge de France\\
\noindent 3, rue d'Ulm\\
\noindent F-75005 PARIS.}

\end{document}